\renewcommand{\epsilon}{\varepsilon}
\numberwithin{equation}{section}
\newtheorem{theorem}{Theorem}[section]
\newtheorem{proposition}[theorem]{Proposition}
\newtheorem{lemma}[theorem]{Lemma}
\theoremstyle{definition}
\newcommand{\R}{\mathbb{R}}
\DeclareMathOperator{\dist}{dist}
\begin{document}

\title[Sign changing bubble tower solutions]
{Sign changing bubble tower solutions to a slightly subcritical  elliptic problem  with non-power nonlinearity}

\author[S.\ Deng]{Shengbing Deng}
\author[F.\ Yu]{Fang Yu}

\address[S.\ Deng]{School of Mathematics and Statistics \newline\indent
Southwest University \newline\indent
Chongqing 400715, People's Republic of China.}
\email{shbdeng@swu.edu.cn}

\address[F.\ Yu]{School of Mathematics and Statistics \newline\indent
Southwest University \newline\indent
Chongqing 400715, People's Republic of China.}
\email{fangyumath@163.com}

\subjclass[2020]{35B33; 35B40; 35J15}

\keywords{non-power nonlinearity;  sign changing bubble tower solutions; Lyapunov-Schmidt reduction.}

\begin{abstract}
We study the following elliptic problem involving slightly subcritical non-power nonlinearity
\begin{eqnarray*}
\left\{ \arraycolsep=1.5pt
   \begin{array}{lll}
-\Delta u =\frac{|u|^{2^*-2}u}{[\ln(e+|u|)]^\epsilon}\ \   &{\rm in}\ \Omega, \\[2mm]
u= 0 \ \  & {\rm on}\ \partial\Omega,
\end{array}
\right.
\end{eqnarray*}
where $\Omega$ is a bounded smooth domain in   $\R^n$, $n\geq 3$,
$2^*=\frac{2n}{n-2}$ is the critical Sobolev exponent,
$\epsilon>0$ is a small parameter.
By  the finite dimensional Lyapunov-Schmidt reduction method,
we  construct a sign changing bubble tower solution with the shape of a tower of bubbles as $\epsilon$ goes to zero.
\end{abstract}

\maketitle

\section{Introduction}\label{intro}

In this paper, we consider the  following elliptic  problem involving slightly subcritical non-power nonlinearity
\begin{align}\label{eqa}
\left\{ \arraycolsep=1.5pt
   \begin{array}{lll}
-\Delta u =\frac{|u|^{2^*-2}u}{[\ln(e+|u|)]^\epsilon}\ \   &{\rm in}\ \Omega, \\[2mm]
u= 0 \ \  & {\rm on}\ \partial\Omega,
\end{array}
\right.
\end{align}
where $\Omega$ is a bounded smooth domain in   $\R^n$, $n\geq 2$,
$2^*=\frac{2n}{n-2}$ is the critical Sobolev exponent  for the embedding  $H^1_0(\Omega)\hookrightarrow L^{2^*}(\Omega)$,
$\epsilon>0$ is a small parameter.

The  main feature of problem (\ref{eqa}) is the  non-power type nonlinearity,
which  is first proposed by
Castro and Pardo \cite{cp1},
they proved the existence of a priori $L^\infty$
bounds for positive solutions of Laplacian
problem   involving the  nonlinearity
$f(u)=\frac{u^{\frac{n+2}{n-2}}}{\ln(2+u)^\alpha}$ with $\alpha>\frac{2}{n-2}$.
Then, Mavinga and Pardo   \cite{np} obtained a priori estimates for positive classical solutions to the following Hamiltonian elliptic system
\begin{eqnarray*}\label{stem}
\left\{ \arraycolsep=1.5pt
   \begin{array}{lll}
-\Delta u =\frac{v^p}{[\ln(e+v)]^\alpha}\ \   &{\rm in}\ \Omega, \\[2mm]
-\Delta v =\frac{u^q}{[\ln(e+u)]^\beta}\ \   &{\rm in}\ \Omega, \\[2mm]
u= v=0 \ \  & {\rm on}\ \partial\Omega,
\end{array}
\right.
\end{eqnarray*}
where $\Omega$  is a bounded convex domain with smooth boundary in $ \R^n$ for $n>2$,
$1<p$, $q<\infty$ and  $\alpha$, $\beta>0$, $\frac{1}{p+1}+\frac{1}{q+1}=\frac{n-2}{n}$.
For more results   of   non-power nonlinearity,
we refer to \cite{cp2,CUD,rp2} for slightly subcritical problem and
\cite{deng}  for supercritical  problem.

On the one hand,   problem (\ref{eqa}) is related to  the following slightly subcritical elliptic problem
\begin{eqnarray}\label{cil}
\left\{ \arraycolsep=1.5pt
   \begin{array}{lll}
-\Delta u =|u|^{2^*-2-\epsilon}u\  &{\rm in}\ \Omega, \\[2mm]
u= 0 \ \  & {\rm on}\  \partial\Omega.
\end{array}
\right.
\end{eqnarray}
When $\epsilon=0$,
Pohozaev  \cite{Poh} proved that  the non existence of  nontrivial solution  if $\Omega$ is a star-sharped domain.
When $\Omega$  is a annulus,
Kazdan and Warner \cite{KAW} obtained the existence of a positive radial solution.
Bahri and Coron \cite{bmc} studied a positive solution provided that $\Omega$  has nontrivial topology.
For the existence of sign changing solution,
there are few results.
When $\Omega=\R^n$, Ding \cite{ding} showed the infinitely many sign changing solutions by Ljusternik-Schnirlman category theory.
In specific case like torii,
Hebey and Vaugon \cite{hvu} investigated the existence and multiple sign changing solutions.
The existence and multiplicity of sign changing solutions are also treated in some contractible domains with an involution symmetry by
Clapp and Weth \cite{cmw}.

When $\epsilon$ is a positive parameter,
problem (\ref{cil}) has   a positive least energy solution $u_\epsilon$,
that is,  $u_\epsilon$ is a solution for the  variational problem
\[
\inf\Big\{\|u\|^2=\int_\Omega|\nabla u|^2dx: u\in H^1_0(\Omega),
\int_\Omega|u|^{2^*-\epsilon}dx=1 \Big\}.
\]
The blow-up phenomenon for positive and  sign changing solutions to   (\ref{cil})  has been studied extensively.
When $\epsilon$ goes to $0$,
Rey \cite{rey} and Han \cite{han}
studied that the  solution to (\ref{cil})
blows up and concentrates at a critical point of   Robin  function.
Moreover,
Flucher and Wei \cite{fwe} proved that the concentration point is the minimum point  of the  Robin  function.
Furthermore,
if $\xi^*$ is a stable critical point of   Robin  function,
then   (\ref{cil}) has a positive solution which blows up at $\xi^*$,
this result  is obtained in  \cite{rey,map}.
For the multiple concentration points,
Rey \cite{resy} showed that
the two  blow   up and concentration points $(\xi_1^*,\xi_2^*)$,
which  is a critical point of a function involving   Robin  function
and Green's function.
If the domain is convex,
Grossi and  Takahashi in \cite{gti} proved that (\ref{cil}) does not admit
any positive solution  blowing up more than two points.
The positive solution to (\ref{cil}) concentrate
simultaneously at different points $\xi_1,\cdots,\xi_k\in\Omega$, $k\geq 2$, has been established  in  \cite{map,BA}.
If any $\xi_i$, $i=1,\cdots, k$, is a simple blow up point,
Li \cite{liy} characterized the  form of solution $u_\epsilon$
near each blow up point $\xi_i$ as
$$
u_\epsilon(x)\sim
 \frac{\mu_i\sqrt{\epsilon}}
{(\mu_i^2\epsilon+|x-\xi_i|^2)^{\frac{n-2}{2}}},
\quad\mbox{with}\  \mu_i>0.
$$

On the other hand, the existence of one sign changing solution   to (\ref{cil}) is first proved in \cite{baw,cacn},
and multiple sign changing solutions with their nodal properties are treated in \cite{bwe,bas}
for $\epsilon\in(0,\frac{4}{n-2})$.
Moreover, they proved that   (\ref{cil}) has a least energy nodal solution  with two nodal domains.
Ben Ayed et al. \cite{bmp} obtained   that the low energy sign-changing solutions blow up at two points,
and the  energy converges to the value $2S^{\frac{n}{2}}$,
where  $S$ is the Sobolev constant for the embedding  $H_0^1(\Omega)$ into $L^{\frac{2n}{n-2}}(\Omega)$.
Bartsch et al. \cite{Bmp} considered that
  (\ref{cil}) has $k$ pairs of sign changing solutions $\pm u_\epsilon^{(i)}$,  $i=1,\cdots, k$,
which satisfies that $u_\epsilon^{(i)}$  blows up positively at a point $\xi_1^{(i)}\in\Omega$
and $-u_\epsilon^{(i)}$  blows up negatively at a point $\xi_2^{(i)}\in\Omega$
with $\xi_1^{(i)} \neq \xi_2^{(i)}$.
Bartsch et al. \cite{Bap} proved a sign changing four-bubble solution  with two positive and two negative blow-up points
provided that $\Omega$
is convex and satisfies some symmetry conditions.

In contrast  to the result of positive and sign changing solutions
to   (\ref{cil}),
there are some papers of  bubble tower.
If $\Omega$ is a smooth bounded domain in $\R^n$ symmetric with respect to $x_1,\cdots,x_n$ and contains the origin,
Pistoia and Weth \cite{paw} constructed a sign changing bubble tower solution $u_\epsilon$ concentrating at the center of symmetry of $\Omega$.
The same consequence in any bounded smooth domain is considered in \cite{gener},
and they removed
the assumption on non-degeneracy of  critical point of Robin's function.
If the domain has holes like $\Omega\setminus B(a,\epsilon)\cup B(b, \epsilon) $ with center at point $a$, $b$ and radius $\epsilon>0$,
Ge et al. \cite{gmp} constructed sign changing solutions  blowing up both at $a$ and $b$.
For any other bubble tower results of  elliptic problem, see \cite{CLY,CS,del,con,Cdj}  and references therein.
In particular,
we refer to the papers \cite{DAG,cog} for fractional and biharmonic operators involving almost critical Soblev exponent. 

Recently,
by Lyapunov-Schmidt reduction method, Clapp et al. \cite{mp,bay} constructed solutions to problem (\ref{eqa}).
Before stating the results, let us introduce some definitions and notations.
For $\xi \in \Omega$ and $\mu>0$,  let
\begin{equation}\label{bubble}
U(x)=\alpha_n \frac{1}{(1+|x|^2)^{\frac{n-2}{2}}},
\quad
U_{ \mu,\xi}(x)=\frac{\alpha_n \mu^{\frac{n-2}{2}}}{( \mu^2 +|x-\xi|^2)^{\frac{n-2}{2}}},
\quad  \mbox{with}\ \alpha_n=(n(n-2))^{\frac{n-2}{4}},
\end{equation}
which   are the only solutions of the equation
\begin{equation}\label{limiequ}
-\Delta u=u^{2^*-1},
\ \ u>0 \quad\mbox{in}\ \ \mathbb{R}^n.
\end{equation}
Let us denote by
$
G(x,y)
$
the Green's function of $-\Delta$ in $\Omega$ with  Dirichlet  boundary condition, 
and by $H(x,y)$  its regular part, so that
\[
H(x,y)=\frac{1}{(n-2)|\partial B|} \Big(\frac{1}{|x-y|^{n-2}}-G(x,y)\Big), \quad\mbox{for\ every } \ x,y\in\Omega,
\]
where $|\partial B|$  denotes the surface area of the unit sphere in  $\R^n$.
The Robin function is defined as $ \varphi(x)=H(x,x)$ for every $x\in\Omega$.
Let $\xi_*$ is a non-degenerate critical point of   Robin function,
Clapp et al. \cite{mp} constructed
a single bubble solution of the form
\[
u_\epsilon= U_{\mu_\epsilon, \xi_\epsilon}+\phi_\epsilon,
\]
with
$\mu_\epsilon\Big(\frac{|\ln\epsilon|}{\epsilon}\Big)^{\frac{1}{n-2}}\rightarrow d>0$,
$\xi_\epsilon\rightarrow\xi_*$,
$\phi_\epsilon\in H_0^1(\Omega)$ such that
 $\int_\Omega|\nabla \phi_\epsilon|^2dx= O\Big(\frac{\epsilon}{|\ln\epsilon|}\Big)$
as $\epsilon\rightarrow0$.
Liu et al. \cite{ZZW}   established a solution concentrating at the origin point for a critical H\'{e}non problem with non-power type.
Ben Ayed et al. \cite{bay} obtained  positive as well as sign changing solutions concentrating at several points,
which involving   Robin  function
and Green's function.

In present paper,
motivated by   several results \cite{mp,bay,paw,gener},
we construct  a solution with the shape of a tower of sign changing bubbles  to
problem (\ref{eqa}) by finite Lyapunov-Schmidt dimensional reduction procedure.

Our result can be stated as follows.

\begin{theorem}\label{kia}
Assume that $n\geq 3$, for any integer $k\geq 1$, there exists $\epsilon_0>0$ such that for every $\epsilon\in(0,\epsilon_0)$,
there are some points $\xi_{i_\epsilon}\in\Omega$
and positive constants $d_{i_\epsilon}$   for $i=1,\cdots k$,
problem (\ref{eqa}) has a  solution  $u_\epsilon$  of the  form,
$$
u_\epsilon(x)=\alpha_n\sum_{i=1}^k(-1)^i
\bigg(\frac{d_{i_\epsilon} (\frac{\epsilon}{|\ln\epsilon|^2} )^{\frac{2i-1}{n-2}}}
{ \Big(d_{i_\epsilon}(\frac{\epsilon}{|\ln\epsilon|^2})
^{\frac{2i-1}{n-2}}\Big)^2
+|x-\xi_{i_\epsilon}|^2}\bigg)^{\frac{n-2}{2}} +\Theta_\epsilon(x),
$$
where  $\|\Theta_\epsilon\| \rightarrow 0$ as $\epsilon\rightarrow 0$,
$\varphi(\xi_{i_\epsilon})\rightarrow \min\limits_{z\in\Omega}\varphi(z)$
and
$d_{i_\epsilon} \rightarrow d_i>0$ for $i=1,\cdots k$.
\end{theorem}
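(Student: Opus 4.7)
The plan is to construct the solution via a finite-dimensional Lyapunov--Schmidt reduction around a bubble tower ansatz. For concentration parameters $d = (d_1,\dots,d_k)$ varying in a fixed compact subset of $(0,\infty)^k$ and base points $\xi = (\xi_1,\dots,\xi_k)$ in a small neighborhood of a minimum point $\xi_*$ of the Robin function $\varphi$, set $\mu_i = d_i (\epsilon/|\ln\epsilon|^2)^{(2i-1)/(n-2)}$ and take as approximate solution
$$V_\epsilon = \sum_{i=1}^k (-1)^i PU_{\mu_i,\xi_i},$$
where $PU_{\mu,\xi}$ denotes the $H_0^1(\Omega)$-projection of $U_{\mu,\xi}$ from \eqref{bubble}. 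The sign alternation makes the tower genuinely sign-changing, and the quotient $\mu_{i+1}/\mu_i \to 0$ at rate $(\epsilon/|\ln\epsilon|^2)^{2/(n-2)}$ separates the bubbles by scale. I look for solutions $u_\epsilon = V_\epsilon + \phi$ with $\phi$ orthogonal (in the appropriate duality) to the $k(n+1)$-dimensional subspace $K_\epsilon$ spanned by the natural kernel directions $Z_i^0 = \partial_{\mu_i} PU_{\mu_i,\xi_i}$ and $Z_i^j = \partial_{(\xi_i)_j} PU_{\mu_i,\xi_i}$ for $j = 1,\dots,n$.

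Write $f_\epsilon(u) = |u|^{2^*-2}u / [\ln(e+|u|)]^\epsilon$ and project the equation $-\Delta u = f_\epsilon(u)$ onto $K_\epsilon^\perp$. This yields an auxiliary problem
$$L_\epsilon \phi := -\Delta \phi - f'_\epsilon(V_\epsilon)\phi = R_\epsilon + N_\epsilon(\phi) + \sum_{i,j} c_i^j Z_i^j,$$
with error $R_\epsilon = \Delta V_\epsilon + f_\epsilon(V_\epsilon)$ and higher-order remainder $N_\epsilon(\phi) = f_\epsilon(V_\epsilon + \phi) - f_\epsilon(V_\epsilon) - f'_\epsilon(V_\epsilon)\phi$. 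Invertibility of $L_\epsilon$ on $K_\epsilon^\perp$, uniform in $(d,\xi)$, follows from the standard Fredholm/contradiction argument based on the nondegeneracy of the kernel of the limit linearization $-\Delta w = (2^*-1)U^{2^*-2}w$. The ansatz error $R_\epsilon$ splits into three physically distinct pieces: the projection error $\mu_i^{(n-2)/2} H(\xi_i,\cdot)$ absorbed by each $PU_i - U_i$, the interaction error between consecutive well-separated bubbles of opposite sign, and the non-power correction arising from
$$[\ln(e+|V_\epsilon|)]^{-\epsilon} = 1 - \epsilon\ln\ln(e+|V_\epsilon|) + O(\epsilon^2).$$
Each piece is controlled in a dual norm such as $L^{2n/(n+2)}(\Omega)$, and a Banach contraction argument on $K_\epsilon^\perp$ produces a unique $\phi = \phi_\epsilon(d,\xi)$ solving the auxiliary equation, with quantitative smallness and $C^1$-dependence on $(d,\xi)$.

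It remains to select $(d,\xi)$ so that every Lagrange multiplier $c_i^j$ vanishes, which is equivalent to finding critical points of the reduced functional $F_\epsilon(d,\xi) := I_\epsilon(V_\epsilon + \phi_\epsilon(d,\xi))$, where $I_\epsilon$ is the energy associated to \eqref{eqa}. Expand $F_\epsilon$ around $V_\epsilon$: the leading constant is $kS^{n/2}/n$, and the relevant correction collects (i) the pairwise interaction terms between consecutive projected bubbles of opposite sign, (ii) the Robin term $\mu_1^{n-2}\varphi(\xi_1)$ (with strictly smaller analogous contributions from inner bubbles), and (iii) the non-power correction obtained by expanding $[\ln(e+V_\epsilon)]^{-\epsilon}$ inside the integrand of $I_\epsilon$. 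With the scales built into the ansatz, these pieces combine into an explicit function $\Psi(d,\xi)$ whose critical-point system in $d$ is triangular: $\partial_{d_i} F_\epsilon = 0$ balances the interactions of bubble $i$ with bubbles $i\pm1$ against the non-power correction from bubble $i$ (and, for $i=1$, the Robin term), and can be solved recursively for positive values $d_1^*,\dots,d_k^*$. The leading $\xi$-dependence, inherited from the Robin term, pushes $\xi_i \to \xi_*$. A local degree or implicit function argument then yields a critical point of $F_\epsilon$ for all sufficiently small $\epsilon$, completing the construction.

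The principal obstacle lies in the uniform $C^0$ and $C^1$ expansion of $F_\epsilon$: one must track three a priori independent small quantities (interaction, Robin, and non-power) at their correct orders, identify the particular scale at which they line up, and verify that their combined $d$-dependence produces a nondegenerate critical point with all $d_i^* > 0$. The non-power piece is especially delicate, because the integrand involves the logarithmic singularity $\ln\ln(e+|V_\epsilon|)$ near the peaks of the stacked bubbles; handling it requires cutoff arguments adapted to each scale $\mu_i$ and careful bookkeeping of $|\ln\epsilon|$-powers. It is precisely this balance that forces the normalization $(\epsilon/|\ln\epsilon|^2)^{(2i-1)/(n-2)}$ in the statement, distinguishing the non-power tower from both the pure-power slightly subcritical tower of \cite{paw,gener} and the single-bubble non-power result of \cite{mp}.
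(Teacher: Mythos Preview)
Your proposal is correct in outline and would succeed, but it takes a genuinely different route from the paper on two points.

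\textbf{Variational vs.\ direct reduction.} You reduce to finding critical points of the energy $F_\epsilon(d,\xi)=I_\epsilon(V_\epsilon+\phi_\epsilon)$, which requires a $C^1$-in-parameters expansion of $F_\epsilon$ and of $\phi_\epsilon$. The paper bypasses the energy entirely: it computes the projections $\langle V+\phi-i^*[f_\epsilon(V+\phi)],\,P\psi^h_{\mu_j,\xi_j}\rangle$ directly (Proposition~\ref{leftside}), obtaining explicit formulas $G^\epsilon_h(\bar d,\bar\sigma,\xi)$ for the coefficients $c_{il}$. After the change of variables $s_1=d_1$, $s_i=d_i/d_{i-1}$, the limit system decouples; a zero $(\bar s_0,0,\xi^0)$ of the limit map $\bar G$ is located by the intermediate value theorem in each $s_i$ (using that $\bar G_0\to-\infty$ as $s_i\to0$ and $\bar G_0>0$ for $s_i$ large) together with minimality of $\varphi$ at $\xi^0$, and Brouwer degree transfers this to a zero of $G^\epsilon$. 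This avoids the $C^1$ bookkeeping your approach needs, at the cost of computing many integrals individually (the terms $P_1,\dots,P_9$ in Section~4 and Lemma~\ref{pf1}).

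\textbf{Parametrization of the centers.} You take the centers $\xi_1,\dots,\xi_k$ as independent parameters. The paper instead writes $\xi_i=\xi+\mu_i\sigma_i$ with a single base point $\xi\in\Omega$ and scaled shifts $\sigma_i\in\R^n$ (with $\sigma_k=0$); the interaction between consecutive bubbles then produces the explicit function $g(\sigma)=\int_{\R^n}|y|^{2-n}(1+|y-\sigma|^2)^{-(n+2)/2}\,dy$ in the reduced system, and the limit is solved at $\sigma_i=0$, a strict minimum of $g$. With your direct $\xi_i$-parametrization the same information is there, but the natural scale of variation of $\xi_i$ is $\mu_i$, so you would need to rescale before the degree argument anyway. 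Your triangular-in-$d$ picture matches the paper's decoupled-in-$s_i$ system after this change of variables.
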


Observe that in the above construction,
the solutions behaves like a superposition of bubbles of different
blow up orders centered at around the minimum point of the Robin function,
thus, it is called bubble tower solutions.
It was first studied  by del Pino et  al.  \cite{pmf} for a slightly supercritical Brezis-Nirenberg problem in a ball,
and this type solutions  has been constructed in many problems, see
\cite{paw,gener,gmp,cog,DAG,Chx} and references therein.

The   paper is organized as follows.
In Section 2, we    give the scheme of proof for Theorem \ref{kia}.
We  show the finite dimensional reduction process  in Section 3.
Proposition \ref{leftside} is proved in Section 4.
Finally, there are some estimates in the Appendix.
We will use  $C>0$ to
denote various positive constants.

\section{Scheme of the proof}

In this section, let us  give the sketch proof of  Theorem \ref{kia}.
We first introduce some notations.
The Sobolev space $ H_0^1(\Omega)$ is endowed
with inner product $\langle \cdot,\cdot\rangle$ defined by
\[
\langle u,v\rangle=\int_\Omega \nabla u \nabla vdx,
\]
for all $u$, $v\in  H_0^1(\Omega)$,
and  $L^p(\Omega)$ is the Lebesgue space  with the norm
$
|u|_{q}=\Big(\int_\Omega |u|^qdx\Big)^{\frac{1}{q}},
$
for $1<q<\infty$.

Let $i^*:L^{\frac{2n}{n+2}}(\Omega)\hookrightarrow   H_0^1(\Omega)$
be the adjoint operator of the embedding  $i: H_0^1(\Omega)\hookrightarrow L^{\frac{2n}{n-2}}(\Omega)$,
that is, for  $v\in L^{\frac{2n}{n+2}}(\Omega)$,
$u=i^*(v)$ if and only if
\[
-\Delta u=v\quad \mbox{in}\ \Omega,\ \
u=0\quad \mbox{on}\ \partial\Omega.
\]
Then,  it holds
\begin{equation}\label{embed}
\|i^*(v)\|\leq c|u|_{\frac{2n}{n+2}},
\end{equation}
for some constant $c>0$  depending only on $\Omega$ and $n$.
Using these definitions and notations,
problem (\ref{eqa}) is equivalent
to the following equation
\[
u=i^*[f_\epsilon(u)],\quad
u\in H_0^1(\Omega),
\]
where $f_\epsilon(u)=\frac{|u|^{2^*-2}u}{[\ln(e+|u|)]^\epsilon}$.

In order to describe the shape of the solutions to problem (\ref{eqa}),
we give  an integer number $k$,
and define the positive parameters
$\mu_i$  as
\begin{equation}\label{delta}
\mu_i=\Big(\frac{\epsilon}{|\ln\epsilon|^2}\Big)^{\frac{2i-1}{n-2}}d_i,\quad\mbox{with}\quad d_i>0,\quad i=1,\cdots,k.
\end{equation}
Let $\xi$ be a point in $\Omega$,
$\xi_i\in \Omega$, $i=1,\cdots,k$, is given by
\begin{equation}\label{deltas}
\xi_i=\xi+\mu_i\sigma_i,\quad\mbox{for\ some\ points}\ \sigma_i \in\R^n,
\end{equation}
where  $\sigma_k=0$.
We will assume the following bounds on the
parameters and points appearing in (\ref{delta}) and (\ref{deltas}):
given $\eta>0$  small,
\begin{equation}\label{eta}
\dist(\xi,\partial\Omega)>\eta,\quad
\eta<d_i<\frac{1}{\eta},\quad
|\sigma_i|\leq\frac{1}{\eta},\quad i=1,\cdots,k.
\end{equation}
It is an immediate observation that
\[
\mu_1=\Big(\frac{\epsilon}{|\ln\epsilon|^2}\Big)^{\frac{1}{n-2}}d_1\quad\mbox{and}\quad
\frac{\mu_{i+1}}{\mu_i}
=\Big(\frac{\epsilon}{|\ln\epsilon|^2}\Big)^{\frac{2}{n-2}}\frac{d_{i+1}}{d_i}.
\]
We denote by  $PU_{ \mu,\xi}$ the projection onto  $H^1_0(\Omega)$ of $U_{ \mu,\xi}$, that is
\begin{eqnarray*}
\left\{ \arraycolsep=1.5pt
   \begin{array}{lll}
-\Delta  PU_{ \mu,\xi}=-\Delta  U_{ \mu,\xi}\ \   &{\rm in}\ \Omega, \\[2mm]
PU_{ \mu,\xi}=0\ \  & {\rm on}\ \partial\Omega.
\end{array}
\right.
\end{eqnarray*}
Let $k\geq 1$,
the approximate solutions  are given by
\begin{equation}\label{solution}
u(x)=V(x)+\phi(x),\quad V(x)=V_{\bar{d},\bar{\sigma},\xi}(x)=\sum_{i=1}^k(-1)^iPU_{\mu_i,\xi_i}(x),
\end{equation}
where 
\begin{equation}\label{com}
\bar{d}=(d_1,\cdots,d_k)\in \R^k_+, \
\bar{\sigma}=(\sigma_1,\cdots,\sigma_k)\in (\R_+^{n})^k.
\end{equation}
The term $\phi$ is small in some
sense.

Let us  describe  $\phi$.
As it is shown in \cite{bEG}, any solution  of
\begin{equation}\label{linear-equ}
-\Delta \psi=f_0^{'}(U_{ \mu,\xi})\psi \quad\mbox{in}\ \mathbb{R}^n,
\end{equation}
can be expressed as a linear combination of
\begin{equation}\label{psi0}
\psi^0(y)=\frac{(n-2)\alpha_n}{2}
\frac{|
y|^2-1}{(1+|y|^2)^{\frac{n }{2}}},\quad
\psi^h(y)= (n-2)\alpha_n
\frac{y_h}{(1+|y|^2)^{\frac{n }{2}}},
  \quad\mbox{for}\ h=1,\cdots,n.
\end{equation}
Moreover, we set
\begin{align}\label{psi}
& \psi^0_{ \mu,\xi}(x)= \mu^{-\frac{n-2}{2}}\psi^0(\frac{x-\xi}{ \mu})
=\frac{n-2}{2} \alpha_n \mu^{\frac{n-2}{2}}  \frac{|x-\xi|^2-\mu^2}{(\mu^2+|x-\xi|^2)^{\frac{n}{2}}},\nonumber\\
&  \psi^h_{\mu,\xi}(x)
=\mu^{-\frac{n-2}{2}} \psi^h(\frac{x-\xi}{\mu})
=(n-2)  \alpha_n  \mu^{\frac{n}{2}} \frac{x_h-\xi_h}{(\mu^2+|x-\xi|^2)^{\frac{n}{2}}}, \ \mbox{for} \ h=1,\cdots,n,
\end{align}
then
\begin{equation}\label{xy}
\psi^0_{ \mu,\xi}(x)=
 \mu\frac{\partial U_{ \mu,\xi}  }{ \partial \mu},\quad
\psi^h_{ \mu,\xi}(x)=
 \mu\frac{\partial U_{ \mu,\xi}  }{ \partial \xi_{h}}.
\end{equation}
We denote that $ P\psi^h_{ \mu,\xi}$ is the projection  of  $\psi^h_{ \mu,\xi}$,  $h=0,\cdots,n$,
and define the subspace of $H^1_0(\Omega)$
\begin{align*}
& E_{\mu,\xi} =
\mbox{span} \left\{P\psi^h_{ \mu,\xi}: h=0,1,\cdots,n,\ i=1,\cdots,k\right\},\\
& E_{\mu,\xi}^\perp =
\left\{\phi\in H^1_0(\Omega):  \langle \phi,P\psi^h_{ \mu,\xi} \rangle =0: h=0,1,\cdots,n,\ i=1,\cdots,k\right\}.
\end{align*}
Let
\[
\Pi_{ \mu,\xi}:  H_0^1(\Omega)\rightarrow  E_{ \mu,\xi}
\quad\mbox{and}\quad \Pi^{\bot}_{ \mu,\xi}:  H_0^1(\Omega)\rightarrow  E^{\bot}_{ \mu,\xi},
\]
be the  corresponding projections.
To solve (\ref{eqa}),  it is equivalent to solve the couple of following equations
\begin{equation}\label{ng2}
\Pi^{\bot}_{ \mu,\xi}
\Big(V+\phi
-i^*[ f_\epsilon(V+\phi ]\Big)=0,
\end{equation}
and
\begin{equation}\label{ng1}
\Pi_{  \mu,\xi}
\Big(V+\phi
-i^*[ f_\epsilon(V+\phi )]\Big)=0.
\end{equation}

We solve  equation (\ref{ng2}) in the following result,
whose proof can be found in Section 3.

\begin{proposition}\label{pro21}
There exists $\epsilon_0>0$ such that for any $\xi\in\Omega$, $\bar{d} \in \R^k_+$, $\bar{\sigma} \in (\R_+^{n})^k$ satisfying (\ref{eta}), for   $\epsilon\in(0,\epsilon_0)$,
there is a unique  function
$\phi \in E^{\bot}_{ \mu,\xi}$ which solves (\ref{ng2}).
Moreover
\begin{eqnarray}\label{phi}
\|\phi \|
=
\left\{ \arraycolsep=1.5pt
   \begin{array}{lll}
  O\bigg( \frac{\epsilon}{|\ln\epsilon|^2} \Big|\ln  \frac{\epsilon}{|\ln\epsilon|^2}\Big|\ln\Big|\ln \frac{\epsilon}{|\ln\epsilon|^2}\Big| \bigg)\ \   &{\rm if }\   3\leq n\leq 6 , \\[2mm]
 O\bigg(\Big(\frac{\epsilon}{|\ln\epsilon|^2}\Big)
 ^{\frac{n+2}{2(n-2)}}\bigg)\ \  & {\rm if}\   n\geq7.
\end{array}
\right.
\end{eqnarray}
\end{proposition}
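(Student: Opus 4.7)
The plan is a standard Lyapunov--Schmidt reduction, but carried out in the multi-scale bubble tower setting. I would rewrite \eqref{ng2} as the fixed point equation
\[
L(\phi)=N(\phi)+R_\epsilon,
\]
where the linear operator is
\[
L(\phi)=\Pi^{\bot}_{\mu,\xi}\Bigl(\phi-i^{*}[f'_\epsilon(V)\phi]\Bigr),
\]
the remainder of the linearization is
\[
N(\phi)=\Pi^{\bot}_{\mu,\xi}\Bigl(i^{*}\bigl[f_\epsilon(V+\phi)-f_\epsilon(V)-f'_\epsilon(V)\phi\bigr]\Bigr),
\]
and the error term is
\[
R_\epsilon=\Pi^{\bot}_{\mu,\xi}\Bigl(i^{*}[f_\epsilon(V)]-V\Bigr).
\]
The proof then reduces to three items: invertibility of $L$ with a uniform bound $\|L^{-1}\|\le C$, an estimate of $\|R_\epsilon\|$ which exactly matches the right-hand side of \eqref{phi}, and a quadratic bound on $N$. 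Once these are in place, the contraction mapping theorem applied to $\phi\mapsto L^{-1}(N(\phi)+R_\epsilon)$ on a ball whose radius is the bound in \eqref{phi} yields the unique solution.

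\emph{Linear theory.} I would prove $\|L(\phi)\|\ge c\|\phi\|$ for $\phi\in E^{\bot}_{\mu,\xi}$ by contradiction: if there is a sequence $\phi_\epsilon$ with $\|\phi_\epsilon\|=1$ and $\|L(\phi_\epsilon)\|\to 0$, then for each $i=1,\dots,k$ the rescaling $\tilde\phi_{\epsilon,i}(y)=\mu_i^{(n-2)/2}\phi_\epsilon(\mu_i y+\xi_i)$ converges weakly in $D^{1,2}(\mathbb{R}^n)$ to a solution of the limit equation \eqref{linear-equ} with $(\mu,\xi)=(1,0)$. Because $f'_\epsilon(V)$ behaves like $(2^*-1)U_{\mu_i,\xi_i}^{2^*-2}$ near $\xi_i$ at scale $\mu_i$ (the logarithmic factor gives only a negligible perturbation, since $[\ln(e+|V|)]^{-\epsilon}\to 1$ uniformly), the usual non-degeneracy result together with the orthogonality to $P\psi^h_{\mu_i,\xi_i}$ forces the weak limit to be zero on every scale. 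A standard concentration-compactness argument using the different scales $\mu_1\gg\mu_2\gg\cdots\gg\mu_k$ (here the ratios $\mu_{i+1}/\mu_i=(\epsilon/|\ln\epsilon|^2)^{2/(n-2)}(d_{i+1}/d_i)\to 0$ guarantee that the bubbles live at truly separated scales) then rules out residual mass and gives $\phi_\epsilon\to 0$ strongly, contradicting $\|\phi_\epsilon\|=1$.

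\emph{Error estimate.} Using \eqref{embed}, it suffices to bound $|f_\epsilon(V)-\sum_{i=1}^k(-1)^i U_{\mu_i,\xi_i}^{2^*-1}|_{2n/(n+2)}$ plus the error coming from replacing $PU_{\mu_i,\xi_i}$ by $U_{\mu_i,\xi_i}$ (which produces the Robin-function contribution $H(\xi_i,\cdot)$). There are three sources of loss: (i) the non-power correction $[\ln(e+|V|)]^{-\epsilon}=1+O(\epsilon\ln|V|)$, which contributes a factor of order $\epsilon/|\ln\epsilon|^2\cdot|\ln(\epsilon/|\ln\epsilon|^2)|$ after integration against the concentrated bubbles; (ii) the projection discrepancy $PU_{\mu_i,\xi_i}-U_{\mu_i,\xi_i}=-\alpha_n\mu_i^{(n-2)/2}H(x,\xi_i)+o(\mu_i^{(n-2)/2})$; (iii) the interaction between consecutive bubbles in the tower, which for the successive ratio $\mu_{i+1}/\mu_i$ generates the $(\epsilon/|\ln\epsilon|^2)^{(n+2)/(2(n-2))}$ term when $n\ge 7$ and is dominated by the logarithmic term when $3\le n\le 6$. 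The relevant integrals are exactly the ones carried out in the Appendix, so I would organize the estimates term-by-term to exhibit the dichotomy in \eqref{phi}.

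\emph{Nonlinear term and conclusion.} For $N(\phi)$ I would use the pointwise bound
\[
|f_\epsilon(V+\phi)-f_\epsilon(V)-f'_\epsilon(V)\phi|\le C\bigl(|V|^{2^*-3}\phi^2+|\phi|^{2^*-1}\bigr)\qquad\text{if }n\le 6,
\]
and the Hölder-type bound $|V+\phi|^{2^*-2}|\phi|$ when $n\ge 7$, together with Sobolev and \eqref{embed}, to get $\|N(\phi)\|\le C\|\phi\|^{\min(2,2^*-1)}$ and a matching Lipschitz estimate. The main obstacle is really the linear step: the tower structure forces one to track $k$ separate blow-up scales simultaneously and to show that the orthogonality to \emph{all} of them rules out nontrivial bounded kernel contributions. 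Once $L^{-1}$ is shown to be uniformly bounded, combining the error bound with the quadratic estimate on $N$ in a ball of the radius announced in \eqref{phi} gives a contraction; the unique fixed point is the desired $\phi$, and the implicit function theorem applied to the parameters $(\bar d,\bar\sigma,\xi)$ yields the stated smooth dependence needed for Section~4.
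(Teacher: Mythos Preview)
Your proposal is correct and follows essentially the same route as the paper: Lemma~\ref{inver} proves the uniform invertibility of $L$ by exactly the contradiction/rescaling argument you describe (carried out on each of the $k$ scales via cut-off functions localizing to the annuli $\mathcal{A}_i$), and the proof of Proposition~\ref{pro21} then writes the fixed-point map as $L^{-1}\bigl(N(\phi)+R_\epsilon\bigr)$, estimates the pieces $H_1$--$H_5$ and $K_1$--$K_3$, and closes by contraction on a ball of radius $R_\epsilon$. The only cosmetic difference is that the paper inserts $f_0(V)$, $\sum_i(-1)^i f_0(PU_{\mu_i,\xi_i})$ and $\sum_i(-1)^i f_0(U_{\mu_i,\xi_i})$ as intermediate comparison terms to organize the error, which is precisely the term-by-term bookkeeping you allude to.
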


From Proposition \ref{pro21}, there is a unique  $\phi \in E^{\bot}_{ \mu,\xi}$ such that (\ref{ng2}) holds,
it means that there are some constants $c_{il}s$ ($i =1,\cdots,k$, $ l=0,\cdots,n$) such that
\begin{equation}\label{ccf}
V+\phi -i^*[f_\epsilon(V+\phi )]
=\sum_{i=1}^k\sum_{l=0}^nc_{il} P\psi^l_{ \mu_i,\xi_i},
\end{equation}
which equals to  solve  equation (\ref{ng1}),
that is, the following result is valid,
 whose proof is postponed to Section 4.

\begin{proposition}\label{leftside}
The following facts hold. \\
Part a. If  $(\bar{d}_\epsilon,\bar{\sigma}_\epsilon,\xi_\epsilon)$ satisfies
\begin{align}\label{aosd}
   \Big\langle V+\phi -i^*[f_\epsilon(V+\phi )], P\psi^h_{ \mu_{j_\epsilon},\xi_{j_\epsilon}}\Big\rangle
  = 0, \quad\mbox{for}\ \ h=0,\cdots,n,
\end{align}
where $j =1,\cdots,k$.  Then $V+\phi $ is a solution of problem (\ref{eqa}).\\
Part b.
For $\xi\in\Omega$,
$\bar{d}=(d_1,\cdots,d_k)\in \R^k_+, \bar{\sigma}=(\sigma_1,\cdots,\sigma_k)\in (\R_+^{n})^k$,
there holds\\
\begin{eqnarray*}
 \quad\Big\langle V+\phi -i^*[f_\epsilon(V+\phi )], P\psi^h_{ \mu_j,\xi_j}\Big\rangle
=
\left\{ \arraycolsep=1.5pt
   \begin{array}{lll}
  \frac{ \epsilon}{|\ln\epsilon|^2} G_0^\epsilon(\bar{d},\bar{\sigma},\xi)
 - \frac{2k^2}{(n-2)^2} a_4 \epsilon \Big|\ln \frac{\epsilon}{|\ln\epsilon|^2}\Big|  \ \  {\rm for}\   h=0,\\[2mm]
 \frac{ \epsilon}{|\ln\epsilon|^2} G_h^\epsilon(\bar{d},\bar{\sigma},\xi)\ \  {\rm for}\   h=1,\cdots,n,
\end{array}
\right.
\end{eqnarray*}
where $j =1,\cdots,k$, and
$G^\epsilon=(G_0^\epsilon,G_h^\epsilon)$ is  given by
\begin{eqnarray*}
\left\{ \arraycolsep=1.0pt
   \begin{array}{lll}
  G_0^\epsilon(\bar{d},\bar{\sigma},\xi)
 =    \alpha_n a_1  d_1 ^{ n-2 }\varphi(\xi)
 + a_3 \sum\limits_{i=1}^{k-1}\Big(\frac{d_{i+1}}{ d_i}\Big)^{\frac{n-2}{2}} g(\sigma_i)
  - a_4\sum\limits_{i=1}^k \frac{2}{2i-1}|\ln  d_i|
 + o(1),   \\[2mm]
 G_h^\epsilon(d_1,\xi)
=\frac{\alpha_n }{2} a_2\partial_{\xi_h}\varphi(\xi)
d_1^{ n-1 }\ \  {\rm for}\   h=1,\cdots,n,
\end{array}
\right.
\end{eqnarray*}
with $G_0^\epsilon:[0,+\infty]\times [0,+\infty]\times \Omega \rightarrow \mathbb{R}\times \mathbb{R}\times\mathbb{R}^n$,
$G_h^\epsilon: [0,+\infty]\times \Omega \rightarrow \mathbb{R}\times\mathbb{R}^n$
and
\begin{align*}
& a_1=(2 ^*-1) \int_{\mathbb{R}^n}  U^{2 ^*-2}(y)  \psi^0(y) dy,\\
& a_2=\int_{\mathbb{R}^n} U^{2 ^*-1} (y)dy,\\
& a_3=\frac{n-2}{2}\alpha_n^{p+1},\\
& a_4=\int_{\mathbb{R}^n}\bigg|
\frac{1}{(1+|y-\sigma_i|^2)^{\frac{n+2}{2}}}
\ln\Big(\frac{1}{(1+|y-\sigma_i|^2)^{\frac{n+2}{2}}}\Big)\psi^0(y)\bigg|dy>0,\\
& g(\sigma)=\int_{\R^n} \frac{  y^{2-n}}{(1+|y-\sigma|^2)^{\frac{n+2}{2}}}dy.
\end{align*}
\end{proposition}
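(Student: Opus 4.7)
The proof of \textbf{Part a} is almost formal. By Proposition \ref{pro21} the function $\phi $ lies in $E^{\bot}_{\mu,\xi}$ and the residual $R := V+\phi - i^*[f_\epsilon(V+\phi)]$ lies in $E_{\mu,\xi}$, so that (\ref{ccf}) holds. The family $\{P\psi^l_{\mu_i,\xi_i}\}_{i,l}$ is linearly independent for $\epsilon$ small: a standard computation shows that its Gram matrix is, up to a vanishing perturbation, block diagonal with invertible diagonal blocks, because for $i\ne i'$ the interaction $\langle P\psi^l_{\mu_i,\xi_i},P\psi^{l'}_{\mu_{i'},\xi_{i'}}\rangle$ is of order $(\mu_{\min}/\mu_{\max})^{(n-2)/2}=o(1)$, while each diagonal block is (up to a multiplicative constant) the positive-definite Gram matrix of the $\psi^l$'s in $\mathbb{R}^n$. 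Hence the orthogonality conditions (\ref{aosd}) force $c_{il}\equiv 0$, so $R=0$ and $V+\phi$ solves (\ref{eqa}).

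For \textbf{Part b}, I would pair (\ref{ccf}) with $P\psi^h_{\mu_j,\xi_j}$ and process the three contributions separately. First, $\langle\phi,P\psi^h_{\mu_j,\xi_j}\rangle=0$. Next, using $-\Delta P\psi^h_{\mu_j,\xi_j}=(2^*-1)U_{\mu_j,\xi_j}^{2^*-2}\psi^h_{\mu_j,\xi_j}$,
\begin{equation*}
\langle V,P\psi^h_{\mu_j,\xi_j}\rangle
=(2^*-1)\sum_{i=1}^k(-1)^i\int_\Omega PU_{\mu_i,\xi_i}\,U_{\mu_j,\xi_j}^{2^*-2}\psi^h_{\mu_j,\xi_j}\,dx,
\end{equation*}
and the integral $\langle i^*[f_\epsilon(V+\phi)],P\psi^h_{\mu_j,\xi_j}\rangle$ equals $\int_\Omega f_\epsilon(V+\phi)P\psi^h_{\mu_j,\xi_j}\,dx$.

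The heart of the argument is then an $\epsilon$-expansion of the nonlinearity,
\begin{equation*}
[\ln(e+|u|)]^{-\epsilon}=1-\epsilon\ln\ln(e+|u|)+O(\epsilon^2[\ln\ln(e+|u|)]^2),
\end{equation*}
combined with the linearization
\begin{equation*}
f_\epsilon(V+\phi)=f_\epsilon(V)+f_\epsilon'(V)\phi+O(|\phi|^{2^*-1}),
\end{equation*}
where the cross terms in $\phi$ are controlled by $\|\phi\|$ via the bound from Proposition \ref{pro21}. At the pure-power level one recovers, for $h=0$, the classical three-layer expansion: (i) the self-interaction $\int U_{\mu_j,\xi_j}^{2^*-1}\psi^0_{\mu_j,\xi_j}$ which is zero by radial symmetry of $\psi^0$, (ii) the boundary correction $PU_{\mu_j,\xi_j}-U_{\mu_j,\xi_j}\sim-\alpha_n\mu_j^{(n-2)/2}H(x,\xi_j)$ which, via the scaling $y=(x-\xi)/\mu_1$, yields $\alpha_n a_1 d_1^{n-2}\varphi(\xi)$ (only the outermost bubble $i=1$ contributes at this order), (iii) the cross-bubble interaction of consecutive bubbles $i$ and $i+1$, which after the change of variables $x=\xi_i+\mu_{i+1}y$ produces $a_3(d_{i+1}/d_i)^{(n-2)/2}g(\sigma_i)$ since the ratios $\mu_{i+1}/\mu_i\to 0$ localize the integral on the scale of the inner bubble. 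For $h=1,\ldots,n$ only the boundary correction survives and its Taylor expansion in $\xi$ yields the $\partial_{\xi_h}\varphi$ term.

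The $\epsilon$-linear correction from the $\ln\ln$ factor produces the remaining contributions. Using $|V(x)|\sim\alpha_n/\mu_j^{(n-2)/2}$ on the scale of the $j$-th bubble, one has $\ln\ln(e+|V|)\approx\ln|\ln\mu_j|\sim\ln|\ln(\epsilon/|\ln\epsilon|^2)|$ plus a bounded remainder $-\tfrac{2}{2i-1}\ln d_i$ (coming from $\mu_i=(\epsilon/|\ln\epsilon|^2)^{(2i-1)/(n-2)}d_i$), and substituting into $-\epsilon\int|V|^{2^*-2}V\,\ln\ln(e+|V|)\,P\psi^0_{\mu_j,\xi_j}$ and rescaling yields, for $h=0$, both the explicit $-\tfrac{2k^2}{(n-2)^2}a_4\epsilon|\ln(\epsilon/|\ln\epsilon|^2)|$ leading term and the $-a_4\sum_i\tfrac{2}{2i-1}|\ln d_i|$ correction inside $G_0^\epsilon$; for $h\ge 1$ this term is $o(\epsilon/|\ln\epsilon|^2)$ by oddness and is absorbed. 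The main obstacle is therefore the bookkeeping of the $k$ different scales: one must show that all cross-interactions $i\ne i\pm 1$ and all error terms of the form $|\phi|\,|V|^{2^*-2}\psi$, $|\phi|^{2^*-1}\psi$, and the $O(\epsilon^2)$ remainder from the $\ln$-expansion are $o(\epsilon/|\ln\epsilon|^2)$, for which the delicate estimates collected in the Appendix will be invoked.
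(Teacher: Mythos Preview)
Your sketch is correct and follows the paper's own route: Part~a is disposed of via the Gram matrix of the $P\psi^l_{\mu_i,\xi_i}$ (Lemma~\ref{inerpro} in the paper), and Part~b is obtained by writing $\langle V,P\psi^h_{\mu_j,\xi_j}\rangle=\sum_i(-1)^i\int f_0(U_{\mu_i,\xi_i})P\psi^h_{\mu_j,\xi_j}$, subtracting $\int f_\epsilon(V+\phi)P\psi^h_{\mu_j,\xi_j}$, and expanding the difference into the boundary correction ($P_1,P_2$), the main nonlinear mismatch $\sum(-1)^if_0(PU_{\mu_i,\xi_i})-f_\epsilon(V)$ ($P_3,P_4$, analysed in Lemma~\ref{pf1} via the further split $J_1+J_2+J_3$), and the $\phi$-dependent remainders ($P_5$--$P_9$), exactly the three layers you describe.

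Two small slips to fix. First, the cross-bubble term $a_3(d_{i+1}/d_i)^{(n-2)/2}g(\sigma_i)$ arises from $\int U_{\mu_i,\xi_i}^{2^*-1}\psi^0_{\mu_j,\xi_j}$ with $j>i$, and the correct rescaling is the \emph{outer} one $x=\xi+\mu_i y$ (the mass of $U_{\mu_i,\xi_i}^{2^*-1}$ lives at scale $\mu_i$, and $\psi^0_{\mu_j,\xi_j}$ is then seen as $|y|^{2-n}$); your inner-scale change $x=\xi_i+\mu_{i+1}y$ would not produce $g(\sigma_i)$. Second, the Taylor remainder $O(|\phi|^{2^*-1})$ in your linearization of $f_\epsilon(V+\phi)$ is only valid for $n\ge 6$; for $3\le n\le 5$ the second-order term $O(|V|^{2^*-3}|\phi|^2)$ is present and must be estimated separately (the paper does this case split in the bound for $P_5$).
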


From Propositions \ref{pro21} and \ref{leftside},
we   view that  $V+\phi $ is the solution to problem (\ref{eqa}) if there  are $d_\epsilon>0$, $\sigma_\epsilon>0$ and $\xi_\epsilon\in \Omega$ such that $c_{il}(d_\epsilon,\sigma_\epsilon,\xi_\epsilon)$ are zero when $\epsilon$ small enough.

The sequel of this section is devoted to the proof of the main result.

{\it \textbf{Proof of   Theorem  \ref{kia}}}.
By Proposition \ref{leftside}, equation (\ref{aosd})  is equivalent to
\begin{eqnarray}\label{awk}
\left\{ \arraycolsep=1.0pt
   \begin{array}{lll}
      \alpha_n a_1  d_1 ^{ n-2 }\varphi(\xi)
 + a_3 \sum\limits_{i=1}^{k-1}\Big(\frac{d_{i+1}}{ d_i}\Big)^{\frac{n-2}{2}} g(\sigma_i)
  - a_4\sum\limits_{i=1}^k \frac{2}{2i-1}|\ln  d_i|
 = o(1),   \\[2mm]
  \frac{\alpha_n }{2} a_2\partial_{\xi_h}\varphi(\xi)
d_1^{ n-1 }=0,
\end{array}
\right.
\end{eqnarray}
for $h=1,\cdots,n$.
We note that $G^\epsilon\rightarrow G$ uniformly on compact set of $[0,+\infty]\times[0,+\infty]\times  \Omega$,
and   the vector functional
$G (\bar{d},\bar{\sigma},\xi)=\Big(G_0(\bar{d},\bar{\sigma},\xi),G_h(d_1,\xi)\Big)
$
is the principal part of
defined by
\begin{align*}
& G_0(\bar{d},\bar{\sigma},\xi)
 =    \alpha_n a_1  d_1 ^{ n-2 }\varphi(\xi)
 + a_3 \sum\limits_{i=1}^{k-1}\Big(\frac{d_{i+1}}{ d_i}\Big)^{\frac{n-2}{2}} g(\sigma_i)
  - a_4\sum\limits_{i=1}^k \frac{2}{2i-1}|\ln  d_i|,\nonumber\\
& G_h(d_1,\xi)
=\frac{\alpha_n }{2} a_2\partial_{\xi_h}\varphi(\xi)
d_1^{ n-2 },\quad\mbox{for}\ \  h=1,\cdots,n.
\end{align*}

Let us  set $s_1=d_1$, $s_i=\frac{d_i}{d_{i-1}}$, $i=2,\cdots,k$,
then  in the new variables
$\bar{s}=(s_1,\cdots,s_k)$,
 $G_h(d_1,\xi)$ and $ G_0(\bar{d},\bar{\sigma},\xi)$ can be rewrite as
\begin{align*}
\bar{G}_h(s_1,\xi)
= & \frac{\alpha_n }{2} a_2\partial_{\xi_h}\varphi(\xi)
s_1^{ n-2 },\quad\mbox{for}\quad h=1,\cdots,n,\nonumber\\
\bar{G}_0(\bar{s},\bar{\sigma},\xi)
 =  &   \alpha_n a_1  s_1 ^{ n-2 }\varphi(\xi)
 + a_3 \sum\limits_{i=2}^{k}s_i^{\frac{n-2}{2}} g(\sigma_i)
  - a_4\sum\limits_{i=1}^k \frac{2}{2i-1}|\ln  s_i|.
\end{align*}
We denote $\bar{G}=\Big(\bar{G}_0(\bar{s},\bar{\sigma},\xi),\bar{G}_h(s_1,\xi)\Big)
$.
Let $\xi^0\in\Omega$  be a strict minimum point  of  Robin function $\varphi$,
which is the zero point of function $\bar{G}_h$ for $h=1,\cdots,n$.
Observe that $\sigma_i=0$ is a strict minimum point of $g$.
On the other hand,
when $s_i$ is close to 0, the function $\bar{G}_0$ tends to $-\infty$,
and   $\bar{G}_0>0$  as $s_i>0$ large enough,
thus, by intermediate value theorem,  there exists a $\bar{s}_0$ such that $\bar{G}(\bar{s}_0,0, \xi^0)=0$.
Moreover,  $(\bar{s}_0,0, \xi^0)$  is an isolated zero of $\bar{G}$ whose Brouwer degree is not zero.
Therefore, if $\epsilon$ is small enough,
(\ref{awk}) has a solution $(\bar{s}_\epsilon,\bar{\sigma}_\epsilon,\xi_\epsilon)$
near $(\bar{s}_0,0,\xi_0)$.
We conclude that
the right hand side of (\ref{ccf}) is zero, i.e.,
\[
\sum_{i=1}^k\sum_{l=0}^nc_{il} \Big\langle P\psi^l_{ \mu_i,\xi_i}, P\psi^h_{ \mu_j,\xi_j}\Big\rangle=0.
\]
Moreover,  by Lemma \ref{inerpro}, we conclude that $c_{il} $  are zero.
 We finish the proof of this theorem.
\qed

\section{The  finite dimensional reduction}

In this section, we prove Proposition \ref{pro21}.
Let
$L_{ \mu,\xi}: E^{\bot}_{ \mu,\xi}\rightarrow E^{\bot}_{ \mu,\xi}$
be the linear operator defined by
\begin{equation}\label{linea}
L_{ \mu,\xi}(\phi)=\phi-\Pi^{\bot}_{ \mu,\xi}
\Big( i^*[ f_\epsilon^{'}(V)\phi]\Big),
\end{equation}
where
$V$ is defined in (\ref{solution}).
In the following,  we establish the invertibility of  $L_{ \mu,\xi}$ on $E^{\bot}_{ \mu,\xi}$.

\begin{lemma}\label{inver}
There exist  $\epsilon_0>0$ and $C>0$ such that for any $\xi\in\Omega$, $\bar{d} \in \R^k_+$, $\bar{\sigma} \in (\R_+^{n})^k$ satisfying (\ref{eta}), for   $\epsilon\in(0,\epsilon_0)$, it holds
\begin{equation}\label{lal}
\|L_{ \mu,\xi} (\phi)\|\geq C\|\phi\|,\quad \forall\phi\in E^{\bot}_{ \mu,\xi}.
\end{equation}
\end{lemma}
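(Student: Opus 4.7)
The plan is to argue by contradiction using a concentration--compactness/blow-up analysis around each of the $k$ bubble scales. Assume there are sequences $\epsilon_m \to 0$, parameters $(\bar d_m, \bar\sigma_m, \xi_m)$ satisfying (\ref{eta}), and $\phi_m \in E^{\bot}_{\mu_m,\xi_m}$ with $\|\phi_m\|=1$ and $\|h_m\|\to 0$, where $h_m := L_{\mu_m,\xi_m}(\phi_m)$. Writing out (\ref{linea}) and testing the identity $\phi_m - \Pi^{\bot}_{\mu_m,\xi_m}\bigl(i^*[f_{\epsilon_m}'(V_m)\phi_m]\bigr) = h_m$ against $\phi_m$, using (\ref{embed}) and the orthogonality $\phi_m \in E^{\bot}_{\mu_m,\xi_m}$ to absorb the Lagrange-multiplier correction, gives
\begin{equation*}
\|\phi_m\|^2 = \int_\Omega f_{\epsilon_m}'(V_m)\phi_m^2\, dx + o(1),
\end{equation*}
where $V_m = V_{\bar d_m,\bar\sigma_m,\xi_m}$ is the approximate tower (\ref{solution}). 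So it suffices to prove $\int_\Omega f_{\epsilon_m}'(V_m)\phi_m^2 \,dx \to 0$.

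For each level $i\in\{1,\dots,k\}$, introduce the rescaling
\begin{equation*}
\tilde\phi_{i,m}(y) := \mu_{i,m}^{(n-2)/2}\phi_m(\xi_{i,m}+\mu_{i,m}y),\qquad y\in\mu_{i,m}^{-1}(\Omega-\xi_{i,m}),
\end{equation*}
extended by zero to $\R^n$. Since $\|\tilde\phi_{i,m}\|_{\D^{1,2}(\R^n)} = \|\phi_m\| = 1$, a subsequence satisfies $\tilde\phi_{i,m} \weakto \tilde\phi_i$ weakly in $\D^{1,2}(\R^n)$. The scale separation $\mu_{i+1,m}/\mu_{i,m} = O\bigl((\epsilon_m/|\ln\epsilon_m|^2)^{2/(n-2)}\bigr)$ from (\ref{delta}) forces the $j\neq i$ bubbles in $V_m$, after rescaling around level $i$, either to flatten to zero (when $j<i$) or to collapse to the origin (when $j>i$), so that $\mu_{i,m}^{(n-2)/2}V_m(\xi_{i,m}+\mu_{i,m}y)\to (-1)^iU(y)$ locally uniformly on $\R^n\setminus\{0\}$. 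Moreover $\|V_m\|_\infty \le C\mu_{k,m}^{-(n-2)/2}$ yields $\ln(e+|V_m|)\le C|\ln\epsilon_m|$, so the logarithmic factor satisfies
\begin{equation*}
[\ln(e+|V_m|)]^{\epsilon_m} = 1 + O\bigl(\epsilon_m\ln|\ln\epsilon_m|\bigr)\to 1
\end{equation*}
uniformly on $\Omega$. Consequently $\mu_{i,m}^{2}f_{\epsilon_m}'(V_m)(\xi_{i,m}+\mu_{i,m}y)\to (2^*-1)U(y)^{2^*-2}$, and passing to the limit in the rescaled equation for $\phi_m$ gives
\begin{equation*}
-\Delta\tilde\phi_i = (2^*-1)U^{2^*-2}\tilde\phi_i\quad\text{in } \R^n.
\end{equation*}

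The orthogonality $\langle\phi_m,P\psi^h_{\mu_{i,m},\xi_{i,m}}\rangle=0$ for $h=0,1,\dots,n$ transfers, after rescaling and using that $P\psi^h_{\mu_i,\xi_i}-\psi^h_{\mu_i,\xi_i}$ is controlled by a harmonic correction of lower order (the standard projection estimates collected in the Appendix), to $\int_{\R^n}\nabla\tilde\phi_i\cdot\nabla\psi^h\,dy=0$ for $h=0,1,\dots,n$. By the nondegeneracy recalled at (\ref{linear-equ})--(\ref{psi0}), the functions $\psi^h$ span the kernel of the limiting linearised operator, so $\tilde\phi_i\equiv 0$ for every $i$. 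To close the argument, split $\int_\Omega f_{\epsilon_m}'(V_m)\phi_m^2\,dx$ into the $k$ bubble regions $\{|x-\xi_{i,m}|\le R\mu_{i,m}\}$ and their complement: on each bubble region the change of variables turns the integral into $(2^*-1)\int_{|y|\le R}U^{2^*-2}\tilde\phi_{i,m}^2\,dy + o(1)$, which vanishes by the local strong convergence $\tilde\phi_{i,m}\to 0$ in $L^{2^*}_{\mathrm{loc}}$, while a H\"older/Sobolev tail estimate controls the complement by a quantity that is $o_R(1)$ as $R\to\infty$ uniformly in $m$. Combined with the identity above this contradicts $\|\phi_m\|=1$. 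The main obstacle, and the step requiring the most care, is the interplay between the non-power logarithmic factor and the $k$ separated scales: one has to verify uniformly across scales that $[\ln(e+|V_m|)]^{\epsilon_m}\to 1$ does not disturb the limiting linearised problem and that cross-bubble contributions to $f_{\epsilon_m}'(V_m)\phi_m^2$ are negligible. The decay $\epsilon_m\ln|\ln\epsilon_m|\to 0$ and the ratios in (\ref{delta}) are exactly what make both issues manageable.
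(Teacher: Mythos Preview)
Your overall strategy is the same as the paper's --- contradiction, blow-up at each scale, nondegeneracy of $U$ --- but two technical steps need more care, and one of them is specific to the \emph{tower} geometry.

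First, the Lagrange multiplier. Testing against $\phi_m$ does kill $\omega_m\in E_{\mu_m,\xi_m}$ by orthogonality, so your identity $\|\phi_m\|^2=\int_\Omega f'_{\epsilon_m}(V_m)\phi_m^2\,dx+o(1)$ is fine. But when you then pass to the limit in the \emph{equation} for the rescaled $\tilde\phi_{i,m}$, the term $\omega_m=\sum_{j,l} c^{jl}_m P\psi^l_{\mu_j,\xi_j}$ reappears: its level-$i$ piece rescales to $c^{il}_m\,f_0'(U)\psi^l$, which does not vanish unless $c^{il}_m\to 0$. The paper proves this separately (Step~1) by testing the identity against $P\psi^h_{\mu_j,\xi_j}$ and using $\langle\phi_m,P\psi^h_{\mu_j,\xi_j}\rangle=0$ together with the smallness of $f'_{\epsilon}-f'_0$ and of the cross-bubble interactions to force all coefficients to zero. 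You need this before the limiting PDE is justified.

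Second --- and this is the more substantive gap --- your final splitting into balls $\{|x-\xi_{i,m}|\le R\mu_{i,m}\}$ does not work for a tower. These balls are \emph{nested}, not disjoint, and inside the ball at level $i$ all the inner bubbles $j>i$ concentrate at the origin. Consequently $\mu_{i,m}^{2}\,f'_{\epsilon_m}(V_m)(\xi_{i,m}+\mu_{i,m}y)$ does \emph{not} converge to $(2^*-1)U(y)^{2^*-2}$ uniformly on $\{|y|\le R\}$: it blows up near $y=0$, so the claimed ``$+\,o(1)$'' in your formula for the bubble-region contribution fails. The paper handles this by partitioning $B(\xi,\rho)$ into the annuli
\[
\mathcal A_i=\Big\{\sqrt{\mu_i\mu_{i+1}}\le |x-\xi|\le \sqrt{\mu_i\mu_{i-1}}\Big\},
\]
attaching a cutoff $\chi_m^i$ to each rescaled function, and showing that on $\mathcal A_i$ the single bubble $U_{\mu_i,\xi_i}$ genuinely dominates while the contributions from $j\ne i$ are $o(1)$ (via estimates of the type (\ref{radia})). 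Your argument can be repaired either by adopting this annular decomposition or by arguing inductively from the innermost bubble outward, but as written the ball decomposition does not close.
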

\begin{proof}
We argue by contradiction.
Assume there exist sequences   $ \epsilon_m\rightarrow0$,
$\xi\in\Omega$, $\bar{\sigma}_m \in (\R_+^{n})^k$  and
$\bar{d}_m =(d_{1m},\cdots,d_{km})\in \R^k_+$
with $\xi_m\rightarrow \xi\in\Omega$, $\sigma_{im}\rightarrow\sigma_i$
and $d_{im}\rightarrow d_i>0$,
$i=1,\cdots,k$,
$\phi_m$, $h_m\in \Lambda^{\bot}_{ \mu_m,\xi_m}$
such that
\begin{equation}\label{assu}
 L_{ \mu_m,\xi_m} (\phi_m) =h_m, \quad
  \|\phi_m\|=1 \quad\mbox{and}\quad  \|h_m\|\rightarrow0.
\end{equation}
From (\ref{linea}), there exists $\omega_m\in E_{ \mu_m,\xi_m}$ such that
\begin{equation}\label{linear}
 \phi_m- i^*[ f_\epsilon^{'}(V_m)\phi_m]=  h_m+\omega_m,
\end{equation}
where $V_m=V(\bar{d}_m,\bar{\sigma}_m,\xi_m)=\sum\limits_{i=1}^kPU_{ \mu_{im},\xi_{im}}$.

\textbf{Step 1.}
We prove that
\begin{equation}\label{lfimi}
\|\omega_m\|\rightarrow0.
\end{equation}
Let
$\omega_m=\sum\limits_{i=1}^k\sum\limits_{l=0}^n c^{il}_m
P\psi^l_{ \mu_m^i,\xi_m^i}$,
we multiply (\ref{linear}) by  $P\psi^h_{ \mu_m^l,\xi_m^l}$,
and integrating in $\Omega$, then
\begin{equation}\label{tonc}
\sum\limits_{i=1}^k\sum\limits_{l=0}^n c^{il}_m
\langle P\psi^l_{ \mu_m^i,\xi_m^i}, P\psi^h_{ \mu_m^j,\xi_m^j}\rangle
= \int_\Omega f_\epsilon^{'}(V_m)\phi_m P\psi^h_{ \mu_m^j,\xi_m^j}dx.
\end{equation}
From Lemma \ref{inerpro},
  we obtain
\begin{align}
&\sum\limits_{i=1}^k\sum\limits_{l=0}^n c^{il}_m
\langle P\psi^l_{ \mu_m^i,\xi_m^i}, P\psi^h_{ \mu_m^j,\xi_m^j}\rangle\nonumber\\
= & c^{jh}_m \Big(c_h(1+o(1))\Big)
+ O(1) \sum\limits_{l=0, l\neq h }^nc^{jl}_m
  +  o\bigg(\Big(\frac{\epsilon}{|\ln\epsilon|^2}\Big)^{\frac{n}{n-2}}\bigg) \sum\limits_{i=1, i\neq j}^k\sum\limits_{l=0}^n c^{il}_m.
\end{align}
On the other hand, by (\ref{gisewr1}), (\ref{subu2}), (\ref{subu3}), (\ref{sumbus}), (\ref{sumbu2}),  (\ref{fepli2}) and the orthogonality condition $ \langle \phi_m,P\psi^h_{ \mu_m^j,\xi_m^j} \rangle =0$, we have
\begin{align}\label{jiay}
 & \int_\Omega f_\epsilon^{'}(V_m)\phi_m P\psi^h_{ \mu_m^j,\xi_m^j}dx\nonumber\\
  = & \int_\Omega \Big(f_\epsilon^{'}(V_m)-f_0^{'}(V_m)\Big)\phi_m
  \Big(P\psi^h_{ \mu_m^j,\xi_m^j}-\psi^h_{ \mu_m^j,\xi_m^j}\Big)dx
   + \int_\Omega\Big( f_\epsilon^{'}(V_m)-f_0^{'}(V_m)\Big)
  \phi_m \psi^h_{ \mu_m^j,\xi_m^j}dx\nonumber\\
  & + \int_\Omega\Big(f_0^{'}(V_m)-\sum\limits_{i=1}^k(-1)^if_0^{'}(PU_{ \mu_m^i,\xi_m^i})\Big)   \phi_m P\psi^h_{ \mu_m^j,\xi_m^j}dx\nonumber\\
    & + \sum\limits_{i=1}^k(-1)^i\int_\Omega\Big(f_0^{'}(PU_{ \mu_m^i,\xi_m^i})-f_0^{'}(U_{ \mu_m^i,\xi_m^i})\Big)   \phi_m P\psi^h_{ \mu_m^j,\xi_m^j}dx\nonumber\\
  \leq  &   \Big|f_\epsilon^{'}(V_m)-f_0^{'}(V_m)\Big|_{\frac{n}{2}}|\phi_m|_{\frac{2n}{n-2}}
  \Big|P\psi^h_{ \mu_m^j,\xi_m^j}-\psi^h_{ \mu_m^j,\xi_m^j}\Big|_{\frac{2n}{n-2}}\nonumber\\
  & +  \Big|f_\epsilon^{'}(V_m)-f_0^{'}(V_m) \Big|_{\frac{n}{2}}|\phi_m|_{\frac{2n}{n-2}}
  | \psi^h_{ \mu_m^j,\xi_m^j}|_{\frac{2n}{n-2}}\nonumber\\
  & +  \Big|f_0^{'}(V_m)-\sum\limits_{i=1}^k(-1)^if_0^{'}(PU_{ \mu_m^i,\xi_m^i})\Big|_{\frac{n}{2}}
   |P\psi^h_{ \mu_m^j,\xi_m^j}|_{\frac{2n}{n-2}}
   |\phi_m|_{\frac{2n}{n-2}}\nonumber\\
  & + \Big|f_0^{'}(PU_{ \mu_m^i,\xi_m^i})-f_0^{'}(U_{ \mu_m^i,\xi_m^i})\Big|_{\frac{n}{2}}
   |P\psi^h_{ \mu_m^j,\xi_m^j}|_{\frac{2n}{n-2}}
   |\phi_m|_{\frac{2n}{n-2}}
  =O \bigg(\epsilon\ln\Big|\ln\frac{\epsilon}{|\ln\epsilon|^2}\Big|\bigg).
\end{align}
Consequently, from (\ref{tonc})-(\ref{jiay}),
we obtain (\ref{lfimi}).

\textbf{Step 2.} We prove that
\begin{equation}\label{limi}
\liminf_{m\rightarrow\infty}\int_\Omega f_\epsilon^{'}(V_m)u_m^2dx=C>0,
\end{equation}
where $u_m$ satisfies
\begin{align}\label{defqa}
\left\{ \arraycolsep=1.5pt
   \begin{array}{lll}
-\Delta u_m=f_\epsilon^{'}(V_m)u_m+ f_\epsilon^{'}(V_m)( h_m+\omega_m)\ \   &{\rm in}\ \Omega, \\[2mm]
u_m= 0 \ \  & {\rm on}\ \partial\Omega,
\end{array}
\right.
\end{align}
with
\begin{equation}\label{kiaa}
u_m=\phi_m- h_m-\omega_m,\quad \ \|u_m\|\rightarrow1.
\end{equation}

We prove that
\begin{equation}\label{lidmi}
\liminf_{m\rightarrow\infty} \|u_m\|=C>0.
\end{equation}
From (\ref{defqa}), there holds
\begin{equation}\label{ezqa}
u_m
=   i^*\Big[f_\epsilon^{'}(V_m)u_m+ f_\epsilon^{'}(V_m)( h_m+\omega_m)\Big].
\end{equation}
Moreover, by (\ref{embed}), (\ref{sumbus}), (\ref{sumbu2}) and  (\ref{fepli2}),   we get
\begin{align}\label{sia}
|u_m|_{\frac{2n}{n+2}}
\leq  & C\Big( |f_\epsilon^{'}(V_m)u_m |_{\frac{2n}{n+2}}+ |f_\epsilon^{'}(V_m)( h_m+\omega_m)| _{\frac{2n}{n+2}}\Big)\nonumber\\
\leq & C |f_\epsilon^{'}(V_m)-f_0^{'}(V_m)|_{\frac{n}{2}} |u_m|_{\frac{2n}{n-2}}
+ C\Big| f_0^{'}(V_m)-\sum\limits_{i=1}^k(-1)^if_0^{'}(PU_{ \mu_m^i,\xi_m^i}) \Big|_{\frac{n}{2}}
|u_m|_{\frac{2n}{n-2}}\nonumber\\
 & + C |f_\epsilon^{'}(V_m)-f_0^{'}(V_m)|_{\frac{n}{2}} | h_m+\omega_m |_{\frac{2n}{n-2}}\nonumber\\
 &+ C\Big| f_0^{'}(V_m)-\sum\limits_{i=1}^k(-1)^if_0^{'}(PU_{ \mu_m^i,\xi_m^i}) \Big|_{\frac{n}{2}}
| h_m+\omega_m |_{\frac{2n}{n-2}}\nonumber\\
 &+ C\Big|\sum\limits_{i=1}^k(-1)^i\Big(f_0^{'}(PU_{ \mu_m^i,\xi_m^i})-f_0^{'}(U_{ \mu_m^i,\xi_m^i})\Big)\Big|_{\frac{n}{2}}
| h_m+\omega_m |_{\frac{2n}{n-2}}
\leq   C \|u_m\|+o(1).
\end{align}
It follows that $|u_m|_{\frac{2n}{n+2}}\rightarrow0 $ provided that $\|u_m\|\rightarrow0 $,
this contradicts with (\ref{kiaa}). Therefore, (\ref{lidmi}) holds.

We multiply (\ref{ezqa}) by $u_m$, that is
\begin{equation}\label{aim}
\|u_m\|^2
= \int_\Omega f_\epsilon^{'}(V_m)u_m^2dx
+ \int_\Omega f_\epsilon^{'}(V_m)
( h_m+\omega_m)u_mdx.
\end{equation}
By (\ref{assu}) and (\ref{lfimi}), one has
\begin{align}\label{laf}
   \int_\Omega f_\epsilon^{'}(V_m)
( h_m+\omega_m)u_m dx
  \leq &  |f_\epsilon^{'}(V_m)|_{\frac{n}{2}}
  | h_m+\omega_m)|_{\frac{2n}{n-2}}| u_m|_{\frac{2n}{n-2}}\nonumber\\
  \leq & \| h_m+\omega_m\| \| u_m\| =o(1).
\end{align}
Therefore (\ref{limi}) follows by (\ref{kiaa}), (\ref{lidmi}), (\ref{aim})  and (\ref{laf}).

\textbf{Step 3.}
Let us prove that a contradiction arises, by showing that
\begin{equation}\label{lismi}
 \int_\Omega f_\epsilon^{'}(V_m)u_m^2dx=o(1).
\end{equation}
In order to deal with this conclusion,
we  decompose   $B(\xi, \rho)$ into the union of non-overlapping  annuli, that is  $B(\xi, \rho)=\bigcup\limits_{i=1}^k\mathcal{A}_i$, where
\begin{equation}\label{annulus}
\mathcal{A}_i=B(\xi,\sqrt{\mu_i\mu_{i-1}})\setminus B(\xi,\sqrt{\mu_i\mu_{i+1}}), \quad i=1,\cdots,k,
\end{equation}
with $\mu_0=\frac{\rho^2}{\mu_1}$ and  $\mu_{k+1}=0$.
We set  a smooth cut-off function $\chi_m^i$ as
\begin{align}\label{eqda}
\chi_m^i(x) =
\left\{ \arraycolsep=1.5pt
   \begin{array}{lll}
1 \ \   & {\rm if}\ \sqrt{\mu_m^i\mu_m^{i+1}}\leq |x-\xi_m|\leq \sqrt{\mu_m^i\mu_m^{i-1}}, \\[2mm]
0 \ \   & {\rm if}\ |x-\xi_m|\leq\frac{\sqrt{\mu_m^i\mu_m^{i+1}}}{2} \ {\rm or } \ |x-\xi_m|\geq 2\sqrt{\mu_m^i\mu_m^{i-1}},
\end{array}
\right.
\end{align}
and
\begin{equation}\label{chi}
|\nabla\chi_m^i(x)|\leq \frac{2}{\sqrt{\mu_m^i\mu_m^{i-1}}}
\quad {\rm and } \quad
|\nabla^2\chi_m^i(x)|\leq \frac{4}{ \mu_m^i\mu_m^{i-1}},\quad\mbox{for \ any}\ i=1,\cdots,k.
\end{equation}
We define
\begin{equation}\label{sarj}
\tilde{u}_m^i(y)
=(\mu_m^i)^{\frac{n-2}{2}}u_m(\mu_m^iy+\xi_m)\chi_m^i(\mu_m^iy+\xi_m).
\end{equation}
First, the following results will be showed  in  Step 4,
\begin{equation}\label{conver}
\tilde{u}_m^i\rightarrow 0 \quad\mbox{weakly\ in}\ D^{1,2}(\R^n),\quad
\tilde{u}_m^i\rightarrow 0 \quad\mbox{strongly\ in}\ L_{loc}^q(\R^n)\ \mbox{for\ any}\ q\in[2,2^*).
\end{equation}
Let us prove (\ref{lismi}).
There holds
\[
\int_\Omega f_\epsilon^{'}(V_m)u_m^2dx
= \int_{\Omega\setminus B(\xi,\rho)} f_\epsilon^{'}(V_m)u_m^2dx
+\sum_{i=1}^k\int_{\mathcal{A}_i} f_\epsilon^{'}(V_m)u_m^2dx,
\]
where
\[
 \int_{\Omega\setminus B(\xi,\rho)} f_\epsilon^{'}(V_m)u_m^2dx
 \leq C \sum_{i=1}^k(\mu_i^n)^2 \int_{\Omega\setminus B(\xi,\rho)} u_m^2dx
 = o(1).
\]
Since $(\frac{1}{1+|x|^2})^2\in L^{\frac{n}{2}}(\R^n)$ and (\ref{conver})  hold, we conclude that $\int_{(\mathcal{A}_m^i-\xi_m)/\mu_m^i}
 (\frac{1}{1+|y-\sigma_{in}|^2})^{\frac{n-2}{2}(p-1)}(\tilde{u}_m^i)^2dy\rightarrow 0$.
On the other hand, we set $x-\xi_m=\mu_m^iy$ and by a fact that,
let $h\in L^1_{rad}(\R^n)$,
performing the proper change of variable: for any $i\neq l$,
\begin{align}\label{eqsa}
\int_{\frac{\mathcal{A}_m^i-\xi_m}{\mu_m^i}}h(|x|)dx
 =
\left\{ \arraycolsep=1.5pt
   \begin{array}{lll}
O\Big((\frac{\mu_l}{\mu_i})^{\frac{n}{2}}\Big) \ \   & {\rm if}\ i\leq l-1<l, \\[2mm]
O\Big((\frac{\mu_i}{\mu_l})^{\frac{n}{2}}\Big) \ \   & {\rm if}\ i\geq l-1>l.
\end{array}
\right.
\end{align}
By (\ref{eqsa}) and the choice of $\mu_i$  in (\ref{delta}),
 we deduce that
\begin{equation}\label{radia}
\mbox{if}\ h\in L^1_{rad}(\R^n),\, i\neq l,\quad
\int_{\frac{\mathcal{A}_m^l-\xi_m}{\mu_m^i}}h(|x|)dx
=O\bigg(\Big(\frac{\epsilon}{|\ln\epsilon|^2}\Big)^{\frac{n}{n-2}}\bigg).
\end{equation}
Then, there holds
\[
\int_{\frac{\mathcal{A}_m^i-\xi_m}{\mu_m^i}}U^{(p-1)\frac{n}{2}}_{ \mu_{im},\xi_{im}} dx
=O\bigg(\int_{\frac{\sqrt{\mu_m^i\mu_m^{i+1}}}{\mu_m^i} \leq |y|\leq \frac{\sqrt{\mu_m^i\mu_m^{i-1}}}{\mu_m^i} }
 \frac{1}{( 1+|y-\sigma_{im}|^2)^n}dy
 \bigg).
\]
Consquently, from (\ref{fepli2}), we have
\begin{align*}
& \int_{\mathcal{A}^i_m} f_\epsilon^{'}(V_m)u_m^2dx\\
 = & \int_{\mathcal{A}^i_m}\Big( f_\epsilon^{'}(V_m)-f_0^{'}(V_m)\Big)u_m^2dx
 +\int_{\mathcal{A}^i_m} f_0^{'}(V_m)u_m^2dx\\
 \leq & C  |f_\epsilon^{'}(V_m)-f_0^{'}(V_m)|_{\frac{n}{2}}
 |u_m|^2_{\frac{n}{n-2}}
  + C\sum_{i=1}^k \int_{\mathcal{A}^i_m}U^{p-1}_{ \mu_{im},\xi_{im}}u_m^2dx\\
 \leq & C  \epsilon\ln\Big|\ln\frac{\epsilon}{|\ln\epsilon|^2}\Big|
 + C (\mu_m^i)^{2-\frac{n-2}{2}(p-1)}\int_{\frac{\mathcal{A}_m^i-\xi_m}{\mu_m^i}}
 \Big(\frac{1}{1+|y-\sigma_{im}|^2}\Big)^{\frac{n-2}{2}(p-1)}(\tilde{u}_m^i)^2dy\\
 & + C\sum_{j=1,j\neq i}^k \Big(\int_{\mathcal{A}_m^i}U^{(p-1)\frac{n}{2}}_{ \mu_{jm},\xi_{jm}} dx \Big)^{\frac{2}{n}}|u_m|_{\frac{2n}{n-2}}^2
 =o(1).
\end{align*}

\textbf{Step 4.} We prove (\ref{conver}).

From the definition of $\tilde{u}_m^i$, $i=1,\cdots,k$, in (\ref{sarj}), when $x-\xi_m=\mu_m^iy$, we get
\begin{equation}\label{nus}
\nabla\tilde{u}_m^i(y)=(\mu_m^i)^{\frac{n}{2}}
\bigg[\Big(\nabla u_m(x)\Big)\chi_m^i(x)+u_m(x)\Big(\nabla\chi_m^i(x)\Big)\bigg], 
\end{equation}
and
\begin{equation}\label{nuss}
\Delta\tilde{u}_m^i(y)=(\mu_m^i)^{\frac{n+2}{2}}
\bigg[\Big(\Delta u_m(x)\Big)\chi_m^i(x)+2 \nabla u_m(x)\nabla\chi_m^i(x)+   u_m(x)\Big(\Delta\chi_m^i(x)\Big)\bigg]. 
\end{equation}
Then, from (\ref{eqda}), (\ref{chi}) and (\ref{nus}),
it holds that $\|\tilde{u}_m^i\|_{D^{1,2}(\R^n)}\leq C$.
It follows that, up to a subsequence,
\[
\tilde{u}_m^i\rightarrow \tilde{u}^i \quad\mbox{weakly\ in}\ D^{1,2}(\R^n),\quad
\tilde{u}_m^i\rightarrow \tilde{u}^i \quad\mbox{strongly\ in}\ L_{loc}^q(\R^n)\ \mbox{for\ any}\ q\in[2,2^*).
\]
Next, we   show that $\tilde{u}^i$ is the solution of the following problem
\begin{equation}\label{equa}
-\Delta \tilde{u}^i=f_0^{'}(U_{1,\sigma_i})\tilde{u}^i\quad\mbox{in}\ \R^n,
\end{equation}
and satisfies the orthogonality conditions
\begin{equation}\label{eqsua}
\int_{ \R^n}\nabla \psi_{1,\sigma_i}^h \nabla \tilde{u}^idx=0,\quad h=0,1,\cdots,n.
\end{equation}
It follows that $\tilde{u}^i=0$.
This
is a contradiction by the result of \cite{bEG}, which concludes the proof.

\textbf{Step 5.} We prove (\ref{equa}) and (\ref{eqsua}).

(1) Let us prove (\ref{equa}).
By (\ref{nus}) and (\ref{nuss}), if $x-\xi_m=\mu_m^iy$, $y\in \Omega_m^i=\frac{ \Omega-\xi_m}{\mu_m^i}$, we have

\begin{align*}
\left\{ \arraycolsep=1.5pt
   \begin{array}{lll}
-\Delta\tilde{u}_m^i(y)
= & (\mu_m^i)^2f_\epsilon^{'}\Big(V_m(x)\Big)\tilde{u}_m^i(y)
+ (\mu_m^i)^{\frac{n+2}{2}}f_\epsilon^{'}\Big(V_m(x)\Big)\Big( h_m(x)+\omega_m(x)\Big)\chi_m^i(x)\\
& + 2  (\mu_m^i)^{\frac{n+2}{2}}\nabla u_m(x)\nabla\chi_m^i(x)
 + 2 (\mu_m^i)^{\frac{n+2}{2}}u_m(x)\Delta\chi_m^i(x), \\[2mm]
\tilde{u}_m^i= 0 \ &  {\rm on}\ \partial\Omega_m^i.
\end{array}
\right.
\end{align*}
Therefore, if $\varpi\in C_0^\infty(\R^n)$, one has
\begin{align}\label{aisd}
 & \int_{\R^n} \nabla\tilde{u}_m^i(y) \nabla\varpi(y)dy  \nonumber\\
  = & \int_{\R^n} (\mu_m^i)^2f_\epsilon^{'}\Big(V_m(\mu_m^iy+\xi_m)\Big)\tilde{u}_m^i(y)\varpi (y)dy
    + \int_{\R^n}(\mu_m^i)^{\frac{n+2}{2}}\nonumber\\
  & \times f_\epsilon^{'}\Big(V_m(\mu_m^iy+\xi_m)\Big)\Big( h_m(\mu_m^iy+\xi_m)+\omega_m(\mu_m^iy+\xi_m)\Big)
  \chi_m^i(\mu_m^iy+\xi_m)\varpi (y)dy\nonumber\\
  & + 2  (\mu_m^i)^{\frac{n+2}{2}} \int_{\R^n}\Big( \nabla u_m(\mu_m^iy+\xi_m)\nabla\chi_m^i(y)
 +  u_m(\mu_m^iy+\xi_m)\Delta\chi_m^i(\mu_m^iy+\xi_m)\Big)\varpi (y)dy.
\end{align}
From (\ref{eqda}) and (\ref{chi}), we deduce that the second and the third term tends to $0$.
For the first term,  if
$
 \frac{\sqrt{\mu_m^i\mu_m^{i+1}}}{2} \leq |\mu_m^iy|\leq 2\sqrt{\mu_m^i\mu_m^{i-1}},
$
there holds
\begin{align}\label{eqsdfsa}
  f_\epsilon^{'}\Big(V_m(\mu_m^iy+\xi_m)\Big)
  = &  f_\epsilon^{'}\Big( PU_{ \mu_{in},\xi_{in}}(\mu_m^iy+\xi_m)+
  \sum\limits_{i=1, j\neq i}^kPU_{ \mu_{jm},\xi_{jm}}(\mu_m^iy+\xi_m)\Big) \nonumber\\
  = &  f_\epsilon^{'}\Big((\mu_m^{i})^{-\frac{n-2}{2}} U (y-\sigma_{in})+
   U_{ \mu_{jm},\xi_{jm}}(\mu_m^iy+\xi_m)+o(1)\Big),
\end{align}
where
\begin{align}\label{eqdfsa}
U_{ \mu_{jm},\xi_{jm}}(\mu_m^iy+\xi_m)
 =
\left\{ \arraycolsep=1.5pt
   \begin{array}{lll}
O\Big((\mu_m^j)^{-\frac{n-2}{2}}\Big) \ \   & {\rm if}\ i>j, \\[2mm]
O\Big(\frac{(\mu_m^j)^{\frac{n-2}{2}}}{(\mu_m^i)^{n-2}}
\Big|y-\frac{\mu_m^j}{\mu_m^i}\sigma_m^j\Big|^{-(n-2)} \Big) \ \   & {\rm if}\ i<j.
\end{array}
\right.
\end{align}
Moreover, by  (\ref{eqsdfsa}), (\ref{eqdfsa}) and Lebesgue's dominated convergence theorem, it holds that
\[
\int_{\R^n} (\mu_m^i)^2f_\epsilon^{'}\Big(V_m(\mu_m^iy+\xi_m)\Big)\tilde{u}_m^i(y)\varpi (y)dy
\rightarrow
\int_{\R^n} f_0^{'}\Big(U(y-\sigma_i)\Big)\tilde{u}^i(y)\varpi (y)dy.
\]
Then,  (\ref{equa}) follows by passing to the limit in (\ref{aisd}).

(2) Let us prove (\ref{eqsua}).
We set $x-\xi_m=\mu_m^iy$, then
\begin{align}\label{eqssua}
\int_{ \R^n}\nabla \psi_{1,\sigma_m^i}^h (y)\nabla \tilde{u}_m^i(y)dy
= & \int_{ \R^n} f_0^{'}\Big(U_{1,\sigma_m^i}(y)\Big) \psi_{1,\sigma_m^i}^h (y)  \tilde{u}_m^i(y)dy\nonumber\\
= &\mu_m^i \int_{\frac{\sqrt{\mu_m^i\mu_m^{i+1}}}{2} \leq |x-\xi|\leq 2\sqrt{\mu_m^i\mu_m^{i-1}}}
f_0^{'}\Big(U_{ \mu^i_m,\xi^i_m}(x)\Big) \psi_{ \mu^i_m,\xi^i_m}^h (x)  u_m(x)\chi_m^i(x)dx\nonumber\\
= &\mu_m^i\bigg(
\int_{\mathcal{A}_m^i}
f_0^{'}\Big(U_{ \mu^i_m,\xi^i_m}(x)\Big) \psi_{ \mu^i_m,\xi^i_m}^h (x)  u_m(x) dx+o(1)\bigg).
\end{align}
Now, we  show that
\begin{equation}\label{sua}
\mu_m^i
\int_{\mathcal{A}_m^i}
f_0^{'}\Big(U_{ \mu^i_m,\xi^i_m}(x)\Big) \psi_{ \mu^i_m,\xi^i_m}^h (x)  u_m(x) dx
=o(1).
\end{equation}
Therefore,  (\ref{eqsua})  follows by (\ref{eqssua}) and (\ref{sua}),
taking into account that $\sigma_m^i\rightarrow \sigma_i$.
From (\ref{lfimi}) and (\ref{kiaa}), one has
\begin{equation}\label{lsua}
\mu_m^i
\int_{\Omega}
\nabla P\psi_{ \mu^i_m,\xi^i_m}^h (x) \nabla u_m(x) dx
=o(1).
\end{equation}
On the other hand,
\begin{align*}
 \int_{\Omega}
\nabla P\psi_{ \mu^i_m,\xi^i_m}^h (x) \nabla u_m(x) dx
= &  \int_{\Omega}f_0^{'}\Big(U_{ \mu^i_m,\xi^i_m}(x)\Big) \psi_{ \mu^i_m,\xi^i_m}^h (x)  u_m(x) dx  \\
= &   \int_{\Omega\setminus B(\xi_m,\rho)}  \cdots dx
+\sum_{l=1,\ l\neq i}^k\int_{\mathcal{A}^l_m}\cdots dx
+\int_{\mathcal{A}^i_m}\cdots dx\\
= & \int_{\mathcal{A}^i_m}\cdots dx  +o\Big(\frac{1}{\mu_m^i}\Big),
\end{align*}
where
\begin{align}\label{deca}
&\int_{\Omega\setminus B(\xi_m,\rho)} \bigg|f_0^{'}\Big(U_{ \mu^i_m,\xi^i_m}(x)\Big) \psi_{ \mu^i_m,\xi^i_m}^h (x)  u_m(x) \bigg|dx\nonumber\\
\leq & C|\psi_{ \mu^i_m,\xi^i_m}^h|_{\frac{2n}{n-2}}|u_m |_{\frac{2n}{n-2}}
\Big(\int_{\Omega\setminus B(\xi_m,\rho)} U_{ \mu^i_m,\xi^i_m}^{\frac{2n}{n-2}} dx\Big)^{\frac{2}{n}}
=   O(\mu^i_m).
\end{align}
If $l\neq i$, by (\ref{radia}), there is a fact that
$\int_{\mathcal{A}^i_m}U_{ \mu^i_m,\xi^i_m}^{\frac{2n}{n-2}} dx=O\bigg(\Big(\frac{\epsilon}{|\ln\epsilon|^2}\Big)^{\frac{n}{n-2}}\bigg)$,
then
\begin{align*}
&\int_{\mathcal{A}^l_m} \bigg|f_0^{'}\Big(U_{ \mu^i_m,\xi^i_m}(x)\Big) \psi_{ \mu^i_m,\xi^i_m}^h (x)  u_m(x) \bigg|dx\\
\leq & C|\psi_{ \mu^i_m,\xi^i_m}^h |_{\frac{2n}{n-2}}|u_m |_{\frac{2n}{n-2}}
\Big(\int_{\mathcal{A}^l_m} U_{ \mu^i_m,\xi^i_m}^{\frac{2n}{n-2}} dx\Big)^{\frac{2}{n}}
=  O\bigg(\Big(\frac{\epsilon}{|\ln\epsilon|^2}\Big)^{\frac{n}{n-2}}\bigg).
\end{align*}
We finish the proof of this lemma.
\end{proof}

Now, by means of the previous result,  we  show the following proof.

{\it \textbf{Proof of Proposition \ref{pro21}}}:
First of all, we point out that $\phi$
solves equation  (\ref{ng2}) if and only if $\phi$ is   a fixed point of the map  $T_{ \mu,\xi}: E^{\bot}_{ \mu,\xi}\rightarrow E^{\bot}_{ \mu,\xi}$ defined by
\begin{align*}
  T_{ \mu,\xi}(\phi)
  = & L_{ \mu,\xi}^{-1} \Pi^{\bot}_{ \mu,\xi}i^*\bigg[
     \Big( f_\epsilon(V+\phi  )-f_\epsilon(V) -f^{'}_\epsilon(V) \phi  \Big)\\
 & + \Big(f_\epsilon^{'}(V  )-\sum_{i=1}^k(-1)^if_0^{'}(PU_{ \mu_i,\xi_i}) \Big)\phi
   + \Big(\sum_{i=1}^k(-1)^if_0^{'}(PU_{ \mu_i,\xi_i}) -\sum_{i=1}^k(-1)^if_0^{'}( U_{ \mu_i,\xi_i}) \Big)\phi\\
 & + \Big(f_\epsilon (V) -\sum_{i=1}^k(-1)^if_0(PU_{ \mu_i,\xi_i}) \Big)
   + \Big(\sum_{i=1}^k(-1)^if_0(PU_{ \mu_i,\xi_i}) -\sum_{i=1}^k(-1)^if_0(U_{ \mu_i,\xi_i}) \Big)\bigg].
\end{align*}
From  Lemma \ref{inver} and  Sobolev  inequality, we have
\begin{align*}
  \|T_{ \mu,\xi}(\phi)\|
  \leq &  C
      \Big|f_\epsilon(V+\phi  )-f_\epsilon(V) -f^{'}_\epsilon(V) \phi  \Big|_{\frac{2n}{n+2}}\\
 & + C \Big|\Big(f_\epsilon^{'}(V  )-\sum_{i=1}^k(-1)^if_0^{'}(PU_{ \mu_i,\xi_i})\Big)\phi \Big|_{\frac{2n}{n+2}}
   + C\Big|\Big(f^{'}_0(V) -\sum_{i=1}^k(-1)^if_0^{'}( U_{ \mu_i,\xi_i}) \Big)\phi\Big|_{\frac{2n}{n+2}}\\
 & + C\Big|f_\epsilon (V) -\sum_{i=1}^k(-1)^if_0(PU_{ \mu_i,\xi_i}) \Big|_{\frac{2n}{n+2}}
   + C\Big|\sum_{i=1}^k(-1)^i\Big(f_0(PU_{ \mu_i,\xi_i}) - f_0(U_{ \mu_i,\xi_i})\Big)  \Big|_{\frac{2n}{n+2}}\\
 = &   H_1+\cdots+H_5.
\end{align*}

\emph{Estimate of  $ H_1$}:
From  the mean value theorem, we choose $t=t(x)\in (0,1)$, then
\begin{align}\label{dkoa}
  H_1 =  \Big|f_\epsilon(V+\phi  )-f_\epsilon(V) -f^{'}_\epsilon(V) \phi )\Big|_{\frac{2n}{n+2}}
   =    \Big|f_\epsilon(V+t\phi  )-f^{'}_\epsilon(V) \phi )\Big|_{\frac{2n}{n+2}}.
\end{align}
When $n<6$,  Lemma \ref{zsj}   follows that
\begin{align*}
  H_1 \leq & C\Big(  |\phi|^p_{\frac{2n}{n+2}}+|U_{ \mu_i,\xi_i}^{p-2}\phi^2|_{\frac{2n}{n+2}} \Big)
   \leq   C \Big( |\phi|^{p-2}_{\frac{2n}{n-2}}+|U_{ \mu_i,\xi_i}|_{p-2}^{p-2} \Big)|\phi|^2_{\frac{2n}{n-2}}
   =   C \Big( |\phi|^{p-2}+1\Big)\|\phi\|^2.
\end{align*}
When $n=6$,  by Sobolev   inequality, one has
\begin{align*}
  H_1 \leq   C\Big( \Big||\phi|^p\Big|_{\frac{2n}{n+2}}+|\phi^2|_{\frac{2n}{n+2}} \Big)
   =   C \bigg( |\phi|^p_{\frac{2n}{n-2}}+\Big(\int\limits_\Omega|\phi|^{\frac{2n}{n-2}} dx\Big)^{\frac{ n+2}{2n}}\bigg)
   =   2C  |\phi|_{p+1}^p \leq 2C  \|\phi\|^2.
\end{align*}
When $n>6$, there holds
\begin{align*}
  H_1 \leq & C\Big( |\phi|^p_{\frac{2n}{n+2}}+\epsilon|U_{ \mu_i,\xi_i}^{p-1}\phi |_{\frac{2n}{n+2}} \Big)
   =  C \bigg( |\phi|^p_{\frac{2n}{n-2}}+\Big(\int_\Omega(U_{ \mu_i,\xi_i}^{p-1}|\phi|)^{\frac{2n}{n+2}} dx\Big)^{\frac{ n+2}{2n}}\bigg)\nonumber\\
   \leq  & C \Big( |\phi|^p_{\frac{2n}{n-2}}+\epsilon|U_{ \mu_i,\xi_i}|^{p-1}_{\frac{2n}{n-2}} |\phi|_{\frac{2n}{n-2}} \Big)
   =    C \Big( |\phi|^{p-1}_{\frac{2n}{n-2}}+\epsilon|U_{ \mu_i,\xi_i}|^{p-1}_{\frac{2n}{n-2}}  \Big)|\phi|_{\frac{2n}{n-2}}\\
   \leq & C ( \|\phi\|^{p-1} +\epsilon  )\|\phi\|.
\end{align*}
Sum up  these estimates, we have
\begin{eqnarray}\label{dt}
H_1 \leq
\left\{ \arraycolsep=1.5pt
  \begin{array}{lll}
C  ( |\phi|^{p-2}+1 )\|\phi\|^2 \ \   &{\rm if}\  3\leq n\leq5, \\[2mm]
C \|\phi\|^2 \ \   &{\rm if}\   n=6, \\[2mm]
C  ( \|\phi\|^{p-1} +\epsilon  )\|\phi\|\ \   &{\rm if}\  n\geq 7. \\[2mm]
\end{array}
\right.
\end{eqnarray}

\emph{Estimate of  $ H_2$}:
From H\"{o}lder's  inequality and (\ref{fepli1}), we get
\begin{align*}
 H_2 = &
  \Big|\Big(f_\epsilon^{'}(V)-\sum_{i=1}^k(-1)^if_0^{'}(PU_{ \mu_i,\xi_i})\Big)\phi\Big|_{\frac{2n}{n+2}} \\
  \leq &  \Big|f_\epsilon^{'}(V)-\sum_{i=1}^k(-1)^if_0^{'}(PU_{ \mu_i,\xi_i})\Big|_{\frac{ n}{ 2}}|\phi |_{\frac{2n}{n-2}}
  \leq  C \epsilon\ln\Big|\ln\frac{\epsilon}{|\ln\epsilon|^2}\Big| \|\phi\|.
\end{align*}

\emph{Estimate of  $ H_3$}:
By H\"{o}lder's  inequality, (\ref{sumbus}) and (\ref{sumbu2}),  there holds
\begin{eqnarray*}
 H_3  & = &
  \Big|\Big(f^{'}_0(V) -\sum_{i=1}^k(-1)^if_0^{'}( U_{ \mu_i,\xi_i}) \Big)\phi\Big|_{\frac{2n}{n+2}}\\
  & = &
  \Big|\Big(f^{'}_0(V) -\sum_{i=1}^k(-1)^if_0^{'}( PU_{ \mu_i,\xi_i}) \Big)\phi\Big|_{\frac{2n}{n+2}}
  +\Big|\sum_{i=1}^k(-1)^i\Big(f^{'}_0(PU_{ \mu_i,\xi_i}) -f_0^{'}( U_{ \mu_i,\xi_i}) \Big)\phi\Big|_{\frac{2n}{n+2}}\\
  &\leq &  \Big|f_0^{'}(V )-\sum_{i=1}^k(-1)^if_0^{'}(U_{ \mu_i,\xi_i})\Big|_{\frac{ n}{ 2}}|\phi |_{\frac{2n}{n-2}}
  +  k\Big|f_0^{'}(PU_{ \mu_i,\xi_i} )-f_0^{'}(U_{ \mu_i,\xi_i})\Big|_{\frac{ n}{ 2}}|\phi |_{\frac{2n}{n-2}}\\
&   \leq &
  \left\{ \arraycolsep=1.5pt
   \begin{array}{lll}
  O\Big(\frac{\epsilon}{|\ln\epsilon|^2}\|\phi \|\Big)\ \   &{\rm if }\   3\leq n\leq 5 , \\[2mm]
  \bigg(\frac{\epsilon}{|\ln\epsilon|^2}
 \Big|\ln\frac{\epsilon}{|\ln\epsilon|^2}\Big|\|\phi \|\bigg)\ \   &{\rm if }\  n=6, \\[2mm]
 O\bigg(\Big(\frac{\epsilon}{|\ln\epsilon|^2}\Big)^{\frac{-n+8}{n-2} }\|\phi \|\bigg)\ \  & {\rm if}\   n\geq 7.
\end{array}
\right.
\end{eqnarray*}

\emph{Estimate of  $H_4$  and $H_5$}:
From (\ref{fepli1}) and (\ref{sumbu3}), one has
\[
|H_4|_{\frac{2n}{n+2}}+|H_5|_{\frac{2n}{n+2}}= O(R_\epsilon),
\]
and  $R_\epsilon$ satisfies
\begin{eqnarray*}
R_\epsilon
=
\left\{ \arraycolsep=1.5pt
   \begin{array}{lll}
  O\bigg( \frac{\epsilon}{|\ln\epsilon|^2} \Big|\ln  \frac{\epsilon}{|\ln\epsilon|^2}\Big| \ln\Big|\ln \frac{\epsilon}{|\ln\epsilon|^2}\Big| \bigg)\ \   &{\rm if }\   3\leq n\leq 6 , \\[2mm]
 O\bigg(\Big(\frac{\epsilon}{|\ln\epsilon|^2}\Big)^{\frac{n+2}{2(n-2)}}\bigg)\ \  & {\rm if}\   n\geq 7.
\end{array}
\right.
\end{eqnarray*}

From   $H_1$-$H_5$,
there is a constant $C^*>0$ and $ \mu_0>0$ such that  for each $ \mu\in(0, \mu_0)$, we obtain
\[
\|T_{ \mu,\xi}(\phi)\|\leq C^*R_\epsilon\quad \mbox{for every }  \phi\in \tilde{B }=\{\phi \in E^{\bot}_{ \mu,\xi}:
\|\phi\|\leq C^*R_\epsilon\}.
\]

Next, we prove that  $T_{ \mu,\xi}$ is a contraction map.
If $\phi_1$, $\phi_2\in \tilde{B }$, then
\begin{align*}
&  \|T_{ \mu,\xi}(\phi_2)-T_{ \mu,\xi}(\phi_1) \|\\
  \leq &  C
     \Big|f_\epsilon(V+\phi_2  )-f_\epsilon(V+\phi_1) -f^{'}_\epsilon(V) (\phi_2-\phi_1)  \Big|_{\frac{2n}{n+2}}\\
 & + C\Big|\Big(f_\epsilon^{'}(V)-\sum_{i=1}^k(-1)^if_0^{'}(PU_{ \mu_i,\xi_i})\Big)(\phi_2-\phi_1) \Big|_{\frac{2n}{n+2}}\\
 &  + C\Big|\Big(\sum_{i=1}^k(-1)^if_0^{'}(PU_{ \mu_i,\xi_i}) -\sum_{i=1}^k(-1)^if_0^{'}( U_{ \mu_i,\xi_i})\Big) (\phi_2-\phi_1) \Big|_{\frac{2n}{n+2}}
   =   K_1+K_2+K_3.
\end{align*}

\emph{Estimate of $  K_1$}:
Similar to the computations of $H_1$-$H_3$.
By  mean value theorem,  we choose $\varrho=\varrho(x)\in (0,1)$ and $\phi_\varrho=(1-\varrho)\phi_1+\varrho\phi_2$, then
\begin{align*}
  K_1
  = & \Big|f_\epsilon(V+\phi_2  )-f_\epsilon(V+\phi_1) -f^{'}_\epsilon(V) (\phi_2-\phi_1 )  \Big|_{\frac{2n}{n+2}}\\
  = & \Big|\Big(f_\epsilon^{'} (V+\phi_\varrho )- f^{'}_\epsilon(V) \Big)(\phi_2-\phi_1)  \Big|_{\frac{2n}{n+2}}.
\end{align*}
When $n<6$, by Lemma \ref{zsj}   and H\"{o}lder's  inequality, we get
\begin{align*}
 K_1\leq &  C\Big(\Big| |\phi_\varrho|^{p-1}(\phi_2-\phi_1) \Big |_{\frac{2n}{n+2}}
  +\Big|\Big( \sum_{i=1}^k U_{ \mu_i,\xi_i}\Big)^{p-2} \phi_\varrho (\phi_2-\phi_1) \Big |_{\frac{2n}{n+2}}\Big)\\
  \leq &  C\bigg(  |\phi_\varrho|^{p-1}_{\frac{2n}{n-2}} | \phi_2-\phi_1|_{\frac{2n}{n-2}}
  +\bigg( \int_\Omega \Big[ \Big(\sum_{i=1}^k U_{ \mu_i,\xi_i}\Big)^{p-2} \phi_\varrho (\phi_2-\phi_1)\Big]^{\frac{2n}{n+2}}dx \bigg)^{\frac{n+2}{2n}}\bigg)\\
  \leq &   C\Big(|\phi_\varrho|^{p-1}_{\frac{2n}{n-2}}
  +\sum_{i=1}^k | U_{ \mu_i,\xi_i}|_{\frac{2n}{n-2}}^{p-2}  |\phi_\varrho|_{\frac{2n}{n-2}}\Big) |\phi_2-\phi_1|_{\frac{2n}{n-2}}.
\end{align*}
When $n=6$, we have
\[
K_1\leq   C|\phi_\varrho|^{p-1}_{\frac{2n}{n-2}} |\phi_2-\phi_1|_{\frac{2n}{n-2}}.
\]
When $n>6$, there holds
\begin{align*}
 K_1\leq &  C\Big(\Big| |\phi_\varrho|^{p-1}(\phi_2-\phi_1) \Big |_{\frac{2n}{n+2}}
  +\epsilon\Big|\Big( \sum_{i=1}^k U_{ \mu_i,\xi_i}\Big)^{p-1} \phi_\varrho(\phi_2-\phi_1) \Big |_{\frac{2n}{n+2}}\Big)\\
  \leq &  C\bigg[ |\phi_\varrho|^{p-1}_{\frac{2n}{n-2}} | \phi_2-\phi_1|_{\frac{2n}{n-2}}
  +\bigg( \epsilon\int_\Omega \Big[ \Big(\sum_{i=1}^k U_{ \mu_i,\xi_i}\Big)^{p-1} \phi_\varrho (\phi_2-\phi_1)\Big]^{\frac{2n}{n+2}}dx \bigg)^{\frac{n+2}{2n}}\bigg]\\
  \leq &   C\Big(|\phi_\varrho|^{p-1}_{\frac{2n}{n-2}} + \epsilon\Big) |\phi_2-\phi_1|_{\frac{2n}{n-2}}.
\end{align*}
Hence, by Sobolev  inequality, we obtain
\[
 K_1\leq    C\Big(|\phi_\varrho|^{p-1} +\max\{\|\phi_\varrho\|,\epsilon\}\Big) \|\phi_2-\phi_1\|.
\]

\emph{Estimate of $  K_2$}:
Similar to the proof of  $H_2$ and $H_3$, from (\ref{fepli2}) and (\ref{sumbu2}), there holds
\begin{align*}
 K_2 = &  \Big|\Big(f_\epsilon^{'}(V)-\sum_{i=1}^k(-1)^if_0^{'}(PU_{ \mu_i,\xi_i})\Big)(\phi_1-\phi_2)\Big|_{\frac{2n}{n+2}}\\
 & \leq   \Big|f^{'}_\epsilon(V) -f^{'}_0(V)\Big|_{\frac{ n}{ 2}}
  |\phi_2-\phi_1 |_{\frac{2n}{n-2}}+   \Big|f^{'}_0(V) -\sum_{i=1}^k(-1)^if_0^{'}( PU_{ \mu_i,\xi_i})\Big|_{\frac{ n}{ 2}}
  |\phi_2-\phi_1 |_{\frac{2n}{n-2}}\\
  & \leq C \Big ( \frac{\epsilon}{|\ln\epsilon|^2}\Big)^{\frac{-n+8}{n-2} }\ln\Big|\ln\frac{\epsilon}{|\ln\epsilon|^2}\Big|  \|\phi_2-\phi_1 \|.
\end{align*}

\emph{Estimate of $  K_3$}:
By (\ref{sumbus}), one has
\begin{eqnarray*}
 K_3 &  = &
  \Big|\Big(\sum_{i=1}^k(-1)^if_0^{'}(PU_{ \mu_i,\xi_i}) -\sum_{i=1}^k(-1)^if_0^{'}( U_{ \mu_i,\xi_i})\Big) (\phi_2-\phi_1) \Big|_{\frac{2n}{n+2}}\\
  &\leq &  k\Big|f_0^{'}(PU_{ \mu_i,\xi_i}) -f_0^{'}( U_{ \mu_i,\xi_i})\Big|_{\frac{ n}{ 2}}
  |\phi_2-\phi_1 |_{\frac{2n}{n-2}}\\
  & \leq &
\left\{ \arraycolsep=1.5pt
   \begin{array}{lll}
  O\Big(\frac{\epsilon}{|\ln\epsilon|^2}\|\phi_2-\phi_1 \| \Big)\ \   &{\rm if }\   3\leq n\leq 5 , \\[2mm]
 O\bigg( \frac{\epsilon}{|\ln\epsilon|^2}
 \Big|\ln\frac{\epsilon}{|\ln\epsilon|^2}\Big|^{\frac{1}{2}}\|\phi_2-\phi_1 \|\bigg)\ \   &{\rm if }\  n=6, \\[2mm]
 O\bigg(\Big(\frac{\epsilon}{|\ln\epsilon|^2}\Big)^{\frac{2}{n-2} }\|\phi_2-\phi_1 \|\bigg)\ \  & {\rm if}\   n\geq 7.
\end{array}
\right.
\end{eqnarray*}

From  $K_1$-$K_3$,
if $\epsilon$ is sufficient small, there exists a constant $L^*\in (0,1)$ such that
\[
\|T_{ \mu,\xi}(\phi_2)-T_{ \mu,\xi}(\phi_1) \|
  \leq L^* \|\phi_2-\phi_1 \|.
\]
It follows that
$T_{ \mu,\xi}$  is a contraction mapping from $\tilde{B }$ to $ \tilde{B }$,
then,
it has a unique fixed point  $\phi\in \tilde{B }$.
This concludes the proof.
\qed

\section{Proof of  Proposition \ref{leftside}}

This section is devoted to prove Proposition \ref{leftside}.

{  \emph{\textbf{ Proof of  Part a}}}.
We consider the following perturbation problem
\begin{eqnarray}\label{lineapro}
\left\{ \arraycolsep=1.5pt
   \begin{array}{lll}
-\Delta (V+\phi) =f_\epsilon(V+\phi )
  +\sum\limits_{i=0}^k\sum\limits_{l=0}^nc_{il}  U_{\mu_{\epsilon i},\xi_{\epsilon i}}^{p-1} P\psi^l_{ \mu_{\epsilon i},\xi_{\epsilon i}} \ \   &{\rm in}\ \Omega, \\[2mm]
\sum\limits_{i=1}^k\int_\Omega   U_{\mu_{\epsilon i},\xi_{\epsilon i}}^{p-1} P\psi^l_{ \mu_{\epsilon i},\xi_{\epsilon i}}\phi dx= 0 \ \  & {\rm for }\ l=0,1,\cdots,n.
\end{array}
\right.
\end{eqnarray}
From (\ref{aosd}), we have
\begin{equation}\label{eifqo}
  \int_\Omega  \Delta (V+\phi) P\psi^h_{ \mu_j,\xi_j}dx +\int_\Omega f_\epsilon(V+\phi ) P\psi^h_{ \mu_j,\xi_j}dx.
  =0,
\end{equation}
Thus, by (\ref{lineapro}) and  (\ref{eifqo}), we obtain
\[
 c_{il} \int_\Omega U_{\mu_{\epsilon i},\xi_{\epsilon i}}^{p-1} P\psi^l_{ \mu_{\epsilon i},\xi_{\epsilon i}} P\psi^h_{ \mu_j,\xi_j}dx
 =0,
\]
which means that $c_{il}=0$ for  $i=1,\cdots,k$ and $l=0,1,\cdots,n$.
Then $V+\phi $ is a solution of problem (\ref{eqa}).

{  \emph{\textbf{ Proof of  Part b}}}.
There holds
\begin{align*}
  & \Big\langle V+\phi -i^*[f_\epsilon(V+\phi )], P\psi^h_{ \mu_j,\xi_j}\Big\rangle\\
  = & \sum_{i=1}^k \langle PU_{ \mu_i,\xi_i}, P\psi^h_{ \mu_j,\xi_j} \rangle
      -\int_\Omega f_\epsilon(V+\phi )P\psi^h_{ \mu_j,\xi_j}dx \\
  = & \sum_{i=1}^k\int_\Omega (-1)^i f_0(U_{ \mu_i,\xi_i})P\psi^h_{ \mu_j,\xi_j}dx -\int_\Omega f_\epsilon(V+\phi )P\psi^h_{ \mu_j,\xi_j}dx \\
  = & \sum_{i=1}^k\int_\Omega (-1)^i\Big[f_0(U_{ \mu_i,\xi_i})-f_0(PU_{ \mu_i,\xi_i}) \Big] \psi^h_{ \mu_j,\xi_j}dx\\
    &  +\sum_{i=1}^k \int_\Omega(-1)^i \Big[f_0(U_{ \mu_i,\xi_i})-f_0(PU_{ \mu_i,\xi_i}) \Big](P\psi^h_{
    \mu_j,\xi_j} -\psi^h_{ \mu_j,\xi_j})dx \\
    & + \int_\Omega \Big[\sum_{i=1}^k(-1)^if_0(PU_{ \mu_i,\xi_i})-f_\epsilon(V) \Big] \psi^h_{ \mu_j,\xi_j}dx\\
    & + \int_\Omega \Big[\sum_{i=1}^k(-1)^if_0(PU_{ \mu_i,\xi_i})-f_\epsilon(V) \Big](P\psi^h_{ \mu_j,\xi_j}-\psi^h_{ \mu_j,\xi_j})dx \\
    & - \int_\Omega \Big[f_\epsilon(V+\phi  )-f_\epsilon(V) -f^{'}_\epsilon(V) \phi  \Big]P\psi^h_{ \mu_j,\xi_j}dx
     - \int_\Omega  [f^{'}_\epsilon(V)-f^{'}_0(V) ] \phi  P\psi^h_{ \mu_j,\xi_j}dx\\
    & - \int_\Omega \Big[f^{'}_0(V)-\sum_{i=1}^k(-1)^if_0^{'}(U_{ \mu_i,\xi_i})\Big] \phi  P\psi^h_{ \mu_j,\xi_j}dx\\
    & - \sum_{i=1}^k \int_\Omega (-1)^if^{'}_0( U_{ \mu_i,\xi_i})\phi (P\psi^h_{ \mu_j,\xi_j}-\psi^h_{ \mu_j,\xi_j})dx
     - \sum_{i=1}^k \int_\Omega (-1)^if^{'}_0( U_{ \mu_i,\xi_i})\phi \psi^h_{ \mu_j,\xi_j}dx\\
    = & P_1+\cdots,P_9.
\end{align*}

\emph{Estimate of $ P_1$}:
It holds
\begin{align*}
 P_1
 = & \sum_{i=1}^k\int_\Omega(-1)^i \Big[f_0(U_{ \mu_i,\xi_i})-f_0(PU_{ \mu_i,\xi_i}) \Big] \psi^h_{ \mu_j,\xi_j}dx \\
 = &  - \sum_{i=1}^k\int_\Omega (-1)^if^{'}_0(U_{ \mu_i,\xi_i})(PU_{ \mu_i,\xi_i}-U_{ \mu_i,\xi_i})  \psi^h_{ \mu_j,\xi_j}dx\\
& - \sum_{i=1}^k\int_\Omega(-1)^i\Big[ f_0(PU_{ \mu_i,\xi_i})- f_0(U_{ \mu_i,\xi_i})-f^{'}_0(U_{ \mu_i,\xi_i})(PU_{ \mu_i,\xi_i}-U_{ \mu_i,\xi_i}) \Big] \psi^h_{ \mu_j,\xi_j}dx.
\end{align*}
If $h=0$, by (\ref{eq1}) and (\ref{psi}), we have
\begin{eqnarray*}
&&
-\int_\Omega f^{'}_0(U_{ \mu_i,\xi_i})(PU_{ \mu_i,\xi_i}-U_{ \mu_i,\xi_i})  \psi^0_{ \mu_j,\xi_j}dx\\
  & = &
     p\alpha_n  \mu_i^{\frac{n-2}{2}}\int_\Omega  U_{ \mu_i,\xi_i}^{p-1}
      H(x,\xi_i) \psi^0_{ \mu_j,\xi_j}dx  \\
  & = &
     p\alpha_n \mu_i^{ n-2 }\Big(H(\xi_i,\xi_i)+ O( \mu_i) \Big)
     \int_{\frac{\Omega-\xi_i}{ \mu_i}}
     \frac{1}{(1+|y|^2)^2} \psi^0\Big(\frac{\mu_iy+\xi_i-\xi_j}{\mu_j}\Big) dy \\
& =&
\left\{ \arraycolsep=1.5pt
   \begin{array}{lll}
  p\alpha_n \mu_i^{ n-2 }\Big( H(\xi_i,\xi_i)+O( \mu_i)\Big) \int_{\mathbb{R}^n}  U^{p-1}(y)\psi^0(y) dy
 & \quad\mbox{if}\ j=i, \\[2mm]
   \frac{(n-2)\alpha_n^2p}{2}\mu_i^{ n-2 }\Big(H(\xi_i,\xi_i)+ O( \mu_i) \Big)
     \int_{\frac{\Omega-\xi_i}{ \mu_i}}
     \frac{1}{(1+|y|^2)^2}
   \frac{|\mu_iy+\xi_i-\xi_j |^2-\mu_j^2}
   {(\mu_j^2+| \mu_iy+\xi_i-\xi_j |^2)^{\frac{n }{2}}}dy
 & \quad\mbox{if}\ j>i, \\[2mm]
 \end{array}
 \right.\\
& = &
 \left\{ \arraycolsep=1.5pt
  \begin{array}{lll}
    \alpha_n a_1 H(\xi,\xi)  \mu_i^{ n-2 }
  +O( \mu_i^{n-1})
 & \quad\mbox{if}\ j=i\ \mbox{and}\ h=0, \\[2mm]
   C H(\xi,\xi) + O( \mu_i^{n-1})
 & \quad\mbox{if}\ j>i\ \mbox{and}\ h=0. \\[2mm]
\end{array}
\right.
\end{eqnarray*}

If $ h=1,\cdots,n$ and $j=i$, we set $\partial_{\xi_i^h}\varphi(\xi)=\frac{\partial \varphi(\xi) }{ \partial \xi_i^h}$,  by (\ref{xy}), one has
\begin{align*}
& -\int_\Omega f^{'}_0(U_{ \mu_i,\xi_i})(PU_{ \mu_i,\xi_i}-U_{ \mu_i,\xi_i})  \psi^h_{ \mu_j,\xi_j}dx\\
  = & p\alpha_n \mu_i^{\frac{n-2}{2}}
\int_\Omega  U_{ \mu_i,\xi_i}^{p-1}H(x,\xi_i)\psi^h_{ \mu_i,\xi_i}dx \\
 = &  \alpha_n  \mu_i^{\frac{n}{2}} \int_\Omega
H(x,\xi_i) \frac{\partial U^p_{ \mu_i,\xi_i} }{ \partial\xi_i^h}dx \\
 = & \alpha_n  \mu_i^{\frac{n}{2}}\Big[\frac{\partial }{ \partial \xi_i^h}
     \int_\Omega U^p_{ \mu_i,\xi_i}  H(x,\xi_i) dx-\int_\Omega U^p_{ \mu_i,\xi_i} \frac{\partial H(x,\xi_i)}{ \partial \xi_i^h} dx\Big]\\
  = &  \alpha_n  \mu_i^{\frac{n}{2}}\Big[ \mu_i^{\frac{n-2}{2}}\frac{\partial }{ \partial \xi_i^h}
     \int_{\frac{\Omega-\xi_i}{ \mu_i}} U^p_{ \mu_i,\xi_i} H( \mu_iy+\xi_i,\xi_i) dy
     - \mu_i^{\frac{n-2}{2}}\int_{\frac{\Omega-\xi_i}{ \mu_i}} U^p_{ \mu_i,\xi_i} \frac{\partial H( \mu_iy+\xi_i,\xi_i)}{ \partial \xi_i^h}dy\Big]\\
  = & \alpha_n a_2  \mu_i^{n-1}\Big[ \frac{\partial ( H(\xi_i,\xi_i)) }{ \partial \xi_i^h}
      -   \frac{\partial H(\xi_i,\xi_i)}{ \partial \xi_i^h} +O( \mu_i)\Big]\\
  = & \alpha_n a_2  \mu_i^{n-1}\Big(\frac{1}{2}\partial_{\xi_h}\varphi(\xi) +O( \mu_i)\Big).
\end{align*}
If $ h=1,\cdots,n$ and $j>i$, by Lemma \ref{exo} and (\ref{psi}), one has
\begin{align*}
& \int_\Omega f^{'}_0(U_{ \mu_i,\xi_i})(PU_{ \mu_i,\xi_i}-U_{ \mu_i,\xi_i})  \psi^h_{ \mu_j,\xi_j}dx\\
  = & p\alpha_n \mu_i^{\frac{n-2}{2}}
\int_\Omega  U_{ \mu_i,\xi_i}^{p-1}H(x,\xi_i)\psi^h_{ \mu_j,\xi_j}dx \\
 = &  (n-2)p \alpha_n^2 \mu_i^{\frac{n-2}{2}}\mu_j^{ \frac{n }{2}}
     \int_\Omega
      \frac{\mu_i^2}{(\mu_i^2+| x-\xi_i|^2)^2}H(x,\xi_i)
\frac{(x-\xi_j)_h}{(\mu_j^2+| x-\xi_j |^2)^{\frac{n }{2}}}dx\\
 = & O(\mu_i^{\frac{n-2}{2}}\mu_j^{\frac{n}{2}})
 =O\bigg(\Big(\frac{\epsilon}{|\ln\epsilon|^2}\Big)^{\frac{n+1}{n-2}}\bigg).
\end{align*}
On the other hand, by (\ref{subu2}), (\ref{subu3}) and (\ref{sumbu1}), we get
 \begin{align*}
 &  \int_\Omega\Big[ f_0(PU_{ \mu_i,\xi_i})- f_0(U_{ \mu_i,\xi_i})-f^{'}_0(U_{ \mu_i,\xi_i})(PU_{ \mu_i,\xi_i}-U_{ \mu_i,\xi_i}) \Big] \psi^h_{ \mu_j,\xi_j}dx\\
   \leq & \Big|f_0(PU_{ \mu_i,\xi_i})- f_0(U_{ \mu_i,\xi_i})-f^{'}_0(U_{ \mu_i,\xi_i})(PU_{ \mu_i,\xi_i}-U_{ \mu_i,\xi_i})\Big|_{\frac{n}{2}}|\psi^h_{ \mu_j,\xi_j}|_{\frac{n}{n-2}}\\
   = & o\bigg(\Big(\frac{\epsilon}{|\ln\epsilon|^2}\Big)^{\frac{n-1}{n-2}}\bigg)
   \quad\mbox{for}\ h=0,\cdots,n.
 \end{align*}
 Therefore, we get
\begin{eqnarray*}
  P_1
 & = &
\left\{ \arraycolsep=1.5pt
   \begin{array}{lll}
  \alpha_n a_1  \sum\limits_{i=1}^k \mu_i^{ n-2 }\Big( H(\xi_i,\xi_i)+O( \mu_i)\Big)+o( \mu_i^{n-2})
& \quad\mbox{if}\ j=i\ \mbox{and}\ h=0, \\[2mm]
  \sum\limits_{i=1}^k O\Big(H(\xi_i,\xi_i)+ O( \mu_i) \Big)+o( \mu_i^{n-2})
 & \quad\mbox{if}\ j>i\ \mbox{and}\ h=0, \\[2mm]
   \alpha_n a_2  \sum\limits_{i=1}^k\mu_i^{n-1}\Big(\frac{1}{2}\partial_{\xi_h}\varphi(x) +O( \mu_i)\Big)+o( \mu_i^{n-1})\ \   &\quad\mbox{if}\ j=i\ \mbox{and}\ h=1,\cdots,n, \\[2mm]
   O(\sum\limits_{i=1}^k\mu_i^{\frac{n-2}{2}}\mu_j^{\frac{n}{2}})+o( \mu_i^{n-1}) \ \   &\quad\mbox{if}\ j>i\ \mbox{and}\ h=1,\cdots,n, \\[2mm]
\end{array}
\right.\\
& = &
\left\{ \arraycolsep=1.5pt
   \begin{array}{lll}
  \alpha_n a_1 H(\xi,\xi)  \frac{\epsilon}{|\ln\epsilon|^2} d_1^{ n-2 }
  +o \Big(\frac{\epsilon}{|\ln\epsilon|^2}\Big)
 & \quad\mbox{if}\ j=i\ \mbox{and}\ h=0, \\[2mm]
   C H(\xi,\xi) + o \Big(\frac{\epsilon}{|\ln\epsilon|^2}\Big)
 & \quad\mbox{if}\ j>i\ \mbox{and}\ h=0, \\[2mm]
   \frac{1}{2}\alpha_n a_2\partial_{\xi_h}\varphi(\xi)
   \Big( \frac{\epsilon}{|\ln\epsilon|^2}\Big)^{\frac{n-1}{n-2}}d_1^{ n-2 }
    + o\bigg(\Big(\frac{\epsilon}{|\ln\epsilon|^2}\Big)^{\frac{n-1}{n-2}}
    \bigg)\ \   &\quad\mbox{if}\ j=i\ \mbox{and}\ h=1,\cdots,n, \\[2mm]
   O\bigg(\Big(\frac{\epsilon}{|\ln\epsilon|^2}\Big)^{\frac{n-1}{n-2}}\bigg)\ \   &\quad\mbox{if}\ j>i\ \mbox{and}\ h=1,\cdots,n. \\[2mm]
\end{array}
\right.
\end{eqnarray*}

\emph{Estimate of $ P_2$}:
By (\ref{sumbu3}) and (\ref{gisewr1}), we deduce
\begin{eqnarray*}
  P_2
 & = & \sum_{i=1}^k \int_\Omega\Big| (-1)^i \Big[f_0(U_{ \mu_i,\xi_i})-f_0(PU_{ \mu_i,\xi_i}) \Big](P\psi^h_{
    \mu_j,\xi_j} -\psi^h_{ \mu_j,\xi_j})\Big|dx  \\
& \leq & C\sum\limits_{i=1}^k \Big|f_0(U_{ \mu_i,\xi_i})-f_0(PU_{ \mu_i,\xi_i})\Big|_{\frac{2n}{n+2}}
 \Big|P\psi^h_{ \mu_j,\xi_j} -\psi^h_{ \mu_j,\xi_j} \Big|_{\frac{2n}{n-2}} \\
& =&
\left\{ \arraycolsep=1.5pt
   \begin{array}{lll}
 o\Big( \frac{\epsilon}{|\ln\epsilon|^2}\Big)\ \   &{\rm if}\ h=0, \\[2mm]
 o\bigg(\Big(\frac{\epsilon}{|\ln\epsilon|^2}\Big)^{\frac{n-1}{n-2}}\bigg)\ \   &{\rm if}\ h=1,\cdots,n. \\[2mm]
\end{array}
\right.
\end{eqnarray*}

\emph{Estimate of $P_3$}:
The main proof of $P_3$ shows in  Lemma \ref{pf1},   the final result is
\begin{eqnarray*}
P_3  & =  &
  \int_\Omega \Big[\sum\limits_{i=1}^k(-1)^if_0(PU_{ \mu_i,\xi_i})-f_\epsilon(V) \Big] \psi^h_{ \mu_j,\xi_j}dx\\
& =& \left\{ \arraycolsep=1.5pt
   \begin{array}{lll}
  \alpha_n a_1  \frac{\epsilon}{|\ln\epsilon|^2}
  H(\xi,\xi)d_1^{n-2}
  +a_3\frac{\epsilon}{|\ln\epsilon|^2}\sum\limits_{i=1}^{k-1}\Big(\frac{d_{i+1}}{ d_i}\Big)^{\frac{n-2}{2}} g(\sigma_i)
  -\frac{2k^2}{(n-2)^2} a_4\epsilon\Big|\ln \frac{\epsilon}{|\ln\epsilon|^2}\Big|\\
\quad   -a_4 \frac{\epsilon}{|\ln\epsilon|^2}\sum\limits_{i=1}^k \frac{2}{2i-1}|\ln  d_i|
+o\Big(\frac{\epsilon}{|\ln\epsilon|^2}\Big)\ \   \   {\rm if}\  h=0, \\[2mm]
   \frac{1}{2}\alpha_n a_2  \Big(\frac{\epsilon}{|\ln\epsilon|^2}\Big)^{\frac{n-1}{n-2}} d_1^{n-1} \partial_{\xi_h}\varphi(\xi)
   + o\bigg(\Big(\frac{\epsilon}{|\ln\epsilon|^2}\Big)^{\frac{n-1}{n-2}}\bigg)\ \  \    {\rm if}\  h=1,\cdots,n,\\[2mm]
\end{array}
\right.
\end{eqnarray*}

\emph{Estimate of $P_4$}:
 From (\ref{fepli1}) and (\ref{gisewr1}), we get
 \begin{eqnarray*}
  P_4
  & = &
  \int_\Omega \Big|\Big[\sum_{i=1}^k(-1)^if_0(PU_{ \mu_i,\xi_i})-f_\epsilon(V) \Big](P\psi^h_{ \mu_j,\xi_j}-\psi^h_{ \mu_j,\xi_j})\Big|dx \\
  & \leq & C \Big|f_\epsilon(V)-\sum_{i=1}^k(-1)^if_0(PU_{ \mu_i,\xi_i})\Big|_{\frac{2n}{n+2}}
  \Big| P\psi^h_{ \mu_j,\xi_j}-\psi^h_{ \mu_j,\xi_j}\Big|_{\frac{2n}{n-2}} \\
 & = &\left\{ \arraycolsep=1.5pt
   \begin{array}{lll}
 o\Big(\frac{\epsilon}{|\ln\epsilon|^2}\Big)\ \   &{\rm if}\ h=0 \ {\rm and}\ 3\leq n \leq 4, \ \\[2mm]
  o\bigg(\Big(\frac{\epsilon}{|\ln\epsilon|^2}\Big)^{\frac{n-1}{n-2}}\bigg)\ \   &{\rm if}\ h=0\  {\rm and}\ n\geq5,\\[2mm]
  o\Big(\frac{\epsilon}{|\ln\epsilon|^2}\Big)\ \ & {\rm if}\ h=1,\cdots,n \  {\rm and}\ n\geq3.
 \end{array}
 \right.
 \end{eqnarray*}

\emph{Estimate of $P_5$}:
From  (\ref{gisewr1}), (\ref{subu2}),  (\ref{subu3}), (\ref{dkoa}) and
(\ref{dt}),   we have
 \begin{eqnarray*}
  P_5 & = &
  \int_\Omega\Big| \Big[f_\epsilon(V+\phi  )-f_\epsilon(V) -f^{'}_\epsilon(V) \phi  \Big]P\psi^h_{ \mu_j,\xi_j}\Big|dx\\
  & = & O\Big(\Big|f_\epsilon(V+\phi  )-f_\epsilon(V) -f^{'}_\epsilon(V) \phi )\Big|_{\frac{2n}{n+2}}
   |P\psi^h_{ \mu_j,\xi_j} |_{\frac{2n}{n-2}}\Big)\\
 & \leq &\left\{ \arraycolsep=1.5pt
   \begin{array}{lll}
  O(1+\|\phi\|^{p-2}|)\|\phi\|^2\ \   &{\rm if}\ 3\leq n\leq5, \\[2mm]
  O( \|\phi\|^2)\ \   &{\rm if}\ n=6,  \\[2mm]
  O(\epsilon+\|\phi\|^{p-1}|)\|\phi\|^2\ \   &{\rm if}\   n\geq 7, \\[2mm]
 \end{array}
 \right.\\
 &  = &
  \left\{ \arraycolsep=1.5pt
   \begin{array}{lll}
 o\Big( \frac{\epsilon}{|\ln\epsilon|^2}\Big)\ \   &{\rm if}\ n=3, \ \\[2mm]
 o\bigg(\Big(\frac{\epsilon}{|\ln\epsilon|^2}\Big)^{\frac{n-1}{n-2}}\bigg)\ \   &{\rm if}\  n\geq4.\\[2mm]
 \end{array}
 \right.
 \end{eqnarray*}

\emph{Estimate of $P_6$}:
By  (\ref{fepli2}) and  (\ref{phi}), it holds
 \begin{eqnarray*}
  P_6 & = &
  \int_\Omega  \Big| [f^{'}_\epsilon(V)-f^{'}_0(V)] \phi  P\psi^h_{ \mu_j,\xi_j}\Big|dx\\
  & = & O\Big( |f^{'}_\epsilon(V)-f^{'}_0(V) |_{\frac{ n}{ 2}}
   |\phi|_{\frac{2 n}{n- 2}}|P\psi^h_{ \mu_j,\xi_j} |_{\frac{2n}{n-2}}\Big)\\
 & = &
    \left\{ \arraycolsep=1.5pt
   \begin{array}{lll}
 o\Big( \frac{\epsilon}{|\ln\epsilon|^2}\Big)\ \   &{\rm if}\ h=0, \ \\[2mm]
 o\bigg(\Big(\frac{\epsilon}{|\ln\epsilon|^2}\Big)^{\frac{n-1}{n-2}}\bigg)\ \   &{\rm if}\  h=1,\cdots,n..\\[2mm]
 \end{array}
 \right.
 \end{eqnarray*}

\emph{Estimate of $P_7$}:
By (\ref{sumbu2}), (\ref{phi}), (\ref{subu3}), (\ref{subu2}), (\ref{gisewr1}), (\ref{sumbus}),  there holds
\begin{eqnarray*}
 P_7  & = & \int_\Omega \Big|\Big[f^{'}_0(V)-\sum\limits_{i=1}^k(-1)^if^{'}_0(U_{ \mu_i,\xi_i})\Big]\phi P\psi^h_{ \mu_j,\xi_j}\Big |dx\\
  & = & O\bigg(\Big|\Big[f^{'}_0(V)-\sum\limits_{i=1}^k(-1)^if^{'}_0(U_{ \mu_i,\xi_i})\Big]  P\psi^h_{ \mu_j,\xi_j}\Big|_{\frac{2n}{n+2}} \|\phi\|\bigg)\\
  & = & O\bigg(\Big|f^{'}_0(V)-\sum\limits_{i=1}^k(-1)^if^{'}_0(PU_{ \mu_i,\xi_i})\Big|_{\frac{n}{2}} |  P\psi^h_{ \mu_j,\xi_j}|_{\frac{2n}{n+2}} \|\phi\|\bigg)\\
&   +& O\bigg(\sum\limits_{i=1}^k(-1)^i\Big| f^{'}_0(PU_{ \mu_i,\xi_i})-f^{'}_0(U_{ \mu_i,\xi_i})\Big|_{\frac{n}{2}}  | P\psi^h_{ \mu_j,\xi_j} |_{\frac{2n}{n+2}} \|\phi\|\bigg)\\
 & = &
    \left\{ \arraycolsep=1.5pt
   \begin{array}{lll}
 o\Big( \frac{\epsilon}{|\ln\epsilon|^2}\Big)\ \   &{\rm if}\ h=0, \ \\[2mm]
 o\bigg(\Big(\frac{\epsilon}{|\ln\epsilon|^2}\Big)^{\frac{n-1}{n-2}}\bigg)\ \   &{\rm if}\  h=1,\cdots,n.\\[2mm]
 \end{array}
 \right.
 \end{eqnarray*}

\emph{Estimate of $P_8$}:
For $h=0$, by (\ref{subu1}), (\ref{phi}) and (\ref{gisewr1}), it follows that
\begin{eqnarray*}
 P_8 &= &
  \sum_{i=1}^k \int_\Omega \Big|(-1)^if^{'}_0( U_{ \mu_i,\xi_i})\phi (P\psi^0_{ \mu_j,\xi_j}-\psi^0_{ \mu_j,\xi_j})\Big|dx\\
 & = & O\bigg(\sum\limits_{i=1}^k\Big|f^{'}_0(U_{ \mu_i,\xi_i})\Big|_{\frac{ n}{ 2}} |\phi|_{\frac{2n}{n-2}} \Big|P\psi^0_{ \mu_j,\xi_j} -\psi^0_{ \mu_j,\xi_j} \Big|_{\frac{2n}{n-2}} \bigg)\\
 & = &  O\Big(\sum\limits_{i=1}^k  \mu_i^{\frac{ n-2}{ 2}}  \|\phi\|\Big)
   =  \left\{ \arraycolsep=1.5pt
   \begin{array}{lll}
 o\Big( \frac{\epsilon}{|\ln\epsilon|^2}\Big)\ \   &{\rm if}\   n=3, \ \\[2mm]
 o\bigg(\Big(\frac{\epsilon}{|\ln\epsilon|^2}\Big)^{\frac{n-1}{n-2}}\bigg)\ \   &{\rm if}\  n\geq 4,\\[2mm]
 \end{array}
 \right.
\end{eqnarray*}
and for $h=1,\cdots,n$,   we  obtain
\begin{eqnarray*}
 P_8
 &= & \sum_{i=1}^k \int_\Omega\Big| (-1)^if^{'}_0( U_{ \mu_i,\xi_i})\phi (P\psi^h_{ \mu_j,\xi_j}-\psi^h_{ \mu_j,\xi_j})\Big|dx\\
 & = & O\bigg(\Big|\sum\limits_{i=1}^kf^{'}_0(U_{ \mu_i,\xi_i})\Big|_{\frac{ n}{ 2}} |\phi|_{\frac{2n}{n-2}}  |P\psi^h_{ \mu_j,\xi_j} -\psi^h_{ \mu_j,\xi_j} |_{\frac{2n}{n-2}} \bigg)\\
 & = &  O\Big(\sum\limits_{i=1}^k  \mu_i^{\frac{ n }{ 2}}  \|\phi\|\Big)
   =  \left\{ \arraycolsep=1.5pt
   \begin{array}{lll}
 o \Big(\frac{\epsilon}{|\ln\epsilon|^2}\Big) \ \   &{\rm if}\  3\leq n\leq 5, \ \\[2mm]
 o\bigg(\Big(\frac{\epsilon}{|\ln\epsilon|^2}\Big)^{\frac{n-1}{n-2}}\bigg)\ \   &{\rm if}\  n\geq 6.\\[2mm]
 \end{array}
 \right.
\end{eqnarray*}

\emph{Estimate of $ P_9$}:
We use $\phi $ to multiply (\ref{linear-equ}) and integral in the $\Omega$,  then
\begin{align*}
 \int_\Omega f^{'}_0( U_{ \mu_i,\xi_i})\phi \psi^h_{ \mu_j,\xi_j}dx=0.
\end{align*}
From $ P_1$-$ P_9$,  we complete  the proof.
\qed

\section{ Appendix}

We collect some well known estimates.

\begin{lemma} \cite{rey}\label{exo}
Let $\xi\in\Omega$,
$\mu>0$ is small, there holds
\begin{equation}\label{eq1}
  PU_{ \mu,\xi}(x)=U_{ \mu,\xi}(x)-\alpha_n \mu^{\frac{n-2}{2}}H(x,\xi)+O( \mu^{\frac{n+2}{2}}),
\end{equation}
\begin{equation}\label{eq2}
  P\psi^0_{ \mu,\xi}(x)=\psi^0_{ \mu,\xi}(x)-\frac{n-2}{2}\alpha_n \mu^{\frac{n-2}{2}}H(x,\xi)+O( \mu^{\frac{n+4}{2}}),
\end{equation}
\begin{equation}\label{eq3}
  P\psi^h_{ \mu,\xi}(x)=\psi^h_{ \mu,\xi}(x)-\alpha_n \mu^{\frac{n }{2}}\partial_{\xi_h}H(x,\xi)+O( \mu^{\frac{n+2}{2}}),
\end{equation}
as $\mu\rightarrow 0$ uniformly with respect to $\xi$ in compact subsets of $\Omega$,
where $h=1,\cdots,n$ and $\alpha_n$ is given in (\ref{bubble}).
Moreover,
\begin{eqnarray}\label{gisewr1}
 |P\psi^h_{ \mu_j,\xi_j}-\psi^h_{ \mu_j,\xi_j}|_{\frac{2n}{n-2}}=
\left\{ \arraycolsep=1.5pt
   \begin{array}{lll}
 O\bigg(\Big(\frac{\epsilon}{|\ln\epsilon|^2}\Big)^{\frac{1}{2}}\bigg)\ \   &{\rm if}\ h=0, \\[2mm]
 O\bigg(\Big(\frac{\epsilon}{|\ln\epsilon|^2}\Big)^{\frac{n}{2(n -2)}}\bigg) \ \  & {\rm if}\ h=1,\cdots,n,
\end{array}
\right.
\end{eqnarray}
 for  $j=1,\cdots,k$.
\end{lemma}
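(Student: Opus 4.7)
The plan is to treat each of (\ref{eq1})--(\ref{eq3}) as a consequence of the defining property of the projection $P$: the function $w := U_{\mu,\xi} - PU_{\mu,\xi}$ is harmonic in $\Omega$ and equals $U_{\mu,\xi}|_{\partial\Omega}$ on the boundary, and similarly for the derivatives $\psi^h_{\mu,\xi}$. Since $\xi$ ranges over a compact subset with $\dist(\xi,\partial\Omega) > \eta$, the bubble $U_{\mu,\xi}$ is smooth and small near $\partial\Omega$ and can be matched against a harmonic function built from the regular part $H(\cdot,\xi)$. The three expansions then follow from a uniform maximum-principle comparison, and the $L^{2n/(n-2)}$ bounds in (\ref{gisewr1}) drop out by a direct integration.

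For (\ref{eq1}), I would first Taylor-expand
\[
U_{\mu,\xi}(x) = \frac{\alpha_n \mu^{(n-2)/2}}{|x-\xi|^{n-2}}\left(1+\frac{\mu^2}{|x-\xi|^2}\right)^{-(n-2)/2} = \frac{\alpha_n \mu^{(n-2)/2}}{|x-\xi|^{n-2}} + O\bigl(\mu^{(n+2)/2}\bigr)
\]
uniformly on $\partial\Omega$, using $|x-\xi|\ge \eta$. By the paper's convention for $H$, on $\partial\Omega$ one has $H(x,\xi) = \tfrac{1}{(n-2)|\partial B|}|x-\xi|^{-(n-2)}$, so the harmonic function $x\mapsto \alpha_n \mu^{(n-2)/2} H(x,\xi)$ matches the leading term of $U_{\mu,\xi}$ on $\partial\Omega$ up to $O(\mu^{(n+2)/2})$. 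Therefore the harmonic function $r(x) := U_{\mu,\xi}(x) - PU_{\mu,\xi}(x) - \alpha_n \mu^{(n-2)/2} H(x,\xi)$ has boundary values of order $O(\mu^{(n+2)/2})$, and the maximum principle gives $\|r\|_{L^\infty(\Omega)} = O(\mu^{(n+2)/2})$, which is (\ref{eq1}).

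For (\ref{eq2}) and (\ref{eq3}), the plan is to differentiate (\ref{eq1}) in $\mu$ and in $\xi_h$ and use the identities $\psi^0_{\mu,\xi} = \mu\,\partial_\mu U_{\mu,\xi}$, $\psi^h_{\mu,\xi} = \mu\,\partial_{\xi_h} U_{\mu,\xi}$ from (\ref{xy}). Since $-\Delta$ and the zero Dirichlet condition are independent of $(\mu,\xi)$, the projection commutes with $\partial_\mu$ and $\partial_{\xi_h}$; hence $P\psi^0_{\mu,\xi} = \mu\partial_\mu(PU_{\mu,\xi})$ and similarly for $P\psi^h_{\mu,\xi}$. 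Applying $\mu\partial_\mu$ to (\ref{eq1}) produces (\ref{eq2}) with prefactor $\tfrac{n-2}{2}\alpha_n$ from differentiating $\mu^{(n-2)/2}$ and a remainder of order $\mu\cdot\mu^{(n+2)/2-1}\cdot\mu = O(\mu^{(n+4)/2})$; applying $\mu\partial_{\xi_h}$ gives (\ref{eq3}) with prefactor $\alpha_n\mu^{n/2}$ in front of $\partial_{\xi_h} H(x,\xi)$ and remainder $O(\mu^{(n+2)/2})$. The termwise differentiation of the $O(\mu^{(n+2)/2})$ error is legitimate because on $\partial\Omega$ we have a uniform lower bound $|x-\xi|\ge \eta$, so the Taylor remainder is analytic in $\mu$ and smooth in $\xi$.

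Finally, (\ref{gisewr1}) is the easy consequence: $H(\cdot,\xi)$ and $\partial_{\xi_h} H(\cdot,\xi)$ are smooth and bounded on $\overline{\Omega}$ uniformly for $\xi$ in compact subsets with $\dist(\xi,\partial\Omega)>\eta$, so (\ref{eq2})--(\ref{eq3}) yield the pointwise bounds $|P\psi^0_{\mu,\xi}-\psi^0_{\mu,\xi}| = O(\mu^{(n-2)/2})$ and $|P\psi^h_{\mu,\xi}-\psi^h_{\mu,\xi}| = O(\mu^{n/2})$ uniformly in $\Omega$. Taking $L^{2n/(n-2)}$-norms on the bounded domain $\Omega$ preserves these orders, and inserting $\mu_j = (\epsilon/|\ln\epsilon|^2)^{(2j-1)/(n-2)} d_j$ with $d_j$ bounded above and below produces the rates in (\ref{gisewr1}) (the worst rate comes from $j=1$). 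The main obstacle is bookkeeping: matching the prefactor constants with the paper's normalization of $H$, and checking that termwise differentiation in $(\mu,\xi_h)$ of the boundary remainder produces exactly the advertised powers of $\mu$; substantively the argument reduces to harmonicity plus the maximum principle, which are standard.
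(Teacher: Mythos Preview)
The paper does not prove this lemma; it is stated in the Appendix under ``we collect some well known estimates'' with a citation to Rey~\cite{rey}. Your maximum-principle argument (harmonicity of $U_{\mu,\xi}-PU_{\mu,\xi}$, boundary Taylor expansion using $|x-\xi|\ge\eta$, then differentiation in the parameters) is exactly the standard route and is essentially Rey's argument, so there is nothing in the paper to contrast it with.

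One bookkeeping slip is worth flagging. Your computation for the remainder in \eqref{eq2}, ``$\mu\cdot\mu^{(n+2)/2-1}\cdot\mu=O(\mu^{(n+4)/2})$'', does not follow: if the remainder in \eqref{eq1} is $r(x,\mu)=O(\mu^{(n+2)/2})$ (in fact $r|_{\partial\Omega}=c\,\mu^{(n+2)/2}|x-\xi|^{-n}+O(\mu^{(n+6)/2})$), then $\mu\,\partial_\mu r$ still has boundary values of size $O(\mu^{(n+2)/2})$, so the maximum principle only yields $O(\mu^{(n+2)/2})$, not $O(\mu^{(n+4)/2})$. To reach the sharper exponent stated in \eqref{eq2} you must push the expansion of $\psi^0_{\mu,\xi}|_{\partial\Omega}$ one step further and subtract off the corresponding harmonic correction (built, e.g., from second derivatives of $H$), or argue a cancellation in the $\mu^{(n+2)/2}$ term; naive differentiation of \eqref{eq1} is not enough. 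For the purposes of \eqref{gisewr1} and all subsequent uses in the paper the cruder bound $O(\mu^{(n+2)/2})$ already suffices, so this gap is cosmetic rather than structural, but your written justification of the exponent in \eqref{eq2} is incorrect as stated.
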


\begin{lemma}\cite{mp}\label{deia}
There holds
\begin{eqnarray}\label{subu1}
 \int_\Omega U_{ \mu,\xi }^q(x)dx=
\left\{ \arraycolsep=1.5pt
   \begin{array}{lll}
 O\bigg(\Big(\frac{\epsilon}{|\ln\epsilon|^2}\Big)^{\frac{q}{2}}\bigg)\ \   &{\rm if }\  0<q<\frac{n}{n-2}, \\[2mm]
O\bigg(\Big(\frac{\epsilon}{|\ln\epsilon|^2}\Big)^{\frac{n}{2(n-2)}}
\Big|\ln\frac{\epsilon}{|\ln\epsilon|^2}\Big|\bigg)\ \   &{\rm if }\  q=\frac{n}{n-2}, \\[2mm]
 O\bigg(\Big(\frac{\epsilon}{|\ln\epsilon|^2}\Big)^{\frac{n}{n-2}-\frac{q}{2}}\bigg)
\ \  & {\rm if}\   \frac{n}{n-2}<q\leq\frac{2n}{n-2},
\end{array}
\right.
\end{eqnarray}
\begin{eqnarray}\label{subu2}
 \int_\Omega |\psi^0_{ \mu,\xi }(x)|^qdx=
\left\{ \arraycolsep=1.5pt
   \begin{array}{lll}
 O\bigg(\Big(\frac{\epsilon}{|\ln\epsilon|^2}\Big)^{\frac{q}{2}}\bigg)\ \   &{\rm if }\  0<q<\frac{n}{n-2}, \\[2mm]
O\bigg(\Big(\frac{\epsilon}{|\ln\epsilon|^2}\Big)^{\frac{n}{2(n-2)}}
\Big|\ln\frac{\epsilon}{|\ln\epsilon|^2}\Big|\bigg)\ \   &{\rm if }\  q=\frac{n}{n-2}, \\[2mm]
 O\bigg(\Big(\frac{\epsilon}{|\ln\epsilon|^2}\Big)^{\frac{n}{n-2}-\frac{q}{2}}\bigg)
\ \  & {\rm if}\   \frac{n}{n-2}<q\leq\frac{2n}{n-2},
\end{array}
\right.
\end{eqnarray}
and
\begin{eqnarray}\label{subu3}
\int_\Omega |\psi^h_{ \mu,\xi }(x)|^qdx=
\left\{ \arraycolsep=1.5pt
  \begin{array}{lll}
 O\bigg(\Big(\frac{\epsilon}{|\ln\epsilon|^2}\Big)
^{\frac{nq}{2(n-2)} }\bigg)\ \   &{\rm if }\  0<q<\frac{n}{n-1}, \\[2mm]
O\bigg( \Big(\frac{\epsilon}{|\ln\epsilon|^2}\Big) ^{\frac{n^2}{2(n-1)(n-2)}}\Big|\ln\frac{\epsilon}{|\ln\epsilon|^2} \Big| \bigg)\ \   &{\rm if }\ q=\frac{n}{n-1}, \\[2mm]
O\bigg(\Big(\frac{\epsilon}{|\ln\epsilon|^2}\Big)^{\frac{n}{n-2}-\frac{q}{2}}\bigg)\ \  & {\rm if}\   \frac{n}{n-1}<q\leq\frac{2n}{n-2},
\end{array}
\right.
\end{eqnarray}
 for  $h=1,\cdots,n$.
 Moreover,
\begin{eqnarray}\label{sumbu3}
 \Big|  f_0( PU_{ \mu,\xi })- f_0(U_{ \mu,\xi }) \Big|_{\frac{2n}{n+2}}
   =
\left\{ \arraycolsep=1.5pt
   \begin{array}{lll}
 O \Big(\frac{\epsilon}{|\ln\epsilon|^2}\Big) \ \   &{\rm if }\  3\leq n\leq5, \\[2mm]
 O\bigg( \frac{\epsilon}{|\ln\epsilon|^2} \Big|\ln  \frac{\epsilon}{|\ln\epsilon|^2}\Big|^{\frac{2}{3}}\bigg)\ \   &{\rm if }\  n=6, \\[2mm]
 O\bigg(\Big(\frac{\epsilon}{|\ln\epsilon|^2}\Big)^{\frac{n+2}{2(n-2)}}\bigg)\ \  & {\rm if}\   n\geq 7,
\end{array}
\right.
\end{eqnarray}

\begin{eqnarray}\label{sumbus}
 \Big|  f^{'}_0( PU_{ \mu,\xi })- f^{'}_0(U_{ \mu,\xi }) \Big|_{\frac{n}{2}}
   =
\left\{ \arraycolsep=1.5pt
   \begin{array}{lll}
 O \Big(\frac{\epsilon}{|\ln\epsilon|^2}\Big) \ \   &{\rm if }\  n=3, \\[2mm]
 O\bigg( \frac{\epsilon}{|\ln\epsilon|^2} \Big|\ln  \frac{\epsilon}{|\ln\epsilon|^2}\Big|^{\frac{1}{2}}\bigg)\ \   &{\rm if }\  n=4, \\[2mm]
 O\bigg(\Big(\frac{\epsilon}{|\ln\epsilon|^2}\Big)^{\frac{2}{n-2}}\bigg)\ \  & {\rm if}\   n\geq 5,
\end{array}
\right.
\end{eqnarray}

\begin{eqnarray}\label{sumbu1}
&& \Big|  f_0( PU_{ \mu,\xi })- f_0(U_{ \mu,\xi })
     -  f^{'}_0(U_{ \mu,\xi }) (PU_{ \mu,\xi }-U_{ \mu,\xi }) \Big|_{\frac{n}{2}}\nonumber\\
    & = &
\left\{ \arraycolsep=1.5pt
   \begin{array}{lll}
 O\bigg( \Big(\frac{\epsilon}{|\ln\epsilon|^2}\Big)^{\frac{5}{2}}\bigg)\ \   &{\rm if }\  n=3, \\[2mm]
 O\bigg( \Big(\frac{\epsilon}{|\ln\epsilon|^2}\Big)^{\frac{3}{2 }} \Big|\ln\frac{\epsilon}{|\ln\epsilon|^2}\Big|^{\frac{1}{2}}\bigg)\ \   &{\rm if }\  n=4, \\[2mm]
 O\bigg( \Big(\frac{\epsilon}{|\ln\epsilon|^2}\Big)^{\frac{n+2}{2(n-2)}}\bigg)\ \  & {\rm if}\   n\geq 5.
\end{array}
\right.
\end{eqnarray}
\end{lemma}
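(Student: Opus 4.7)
\textbf{Proof plan for Lemma \ref{deia}.} The lemma decomposes into two independent parts: the pure $L^q$ estimates (\ref{subu1})--(\ref{subu3}) on the bubble $U_{\mu,\xi}$ and on the generators $\psi^h_{\mu,\xi}$ of the kernel of the linearized equation, and the comparison estimates (\ref{sumbu3})--(\ref{sumbu1}) controlling how much $f_0$ and $f_0'$ change when the argument is replaced by its projection $PU_{\mu,\xi}$. The first part is handled by a single change of variables; the second by combining the expansion (\ref{eq1}) with pointwise Taylor bounds.

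For (\ref{subu1}), the substitution $y=(x-\xi)/\mu$ yields
\[
\int_\Omega U_{\mu,\xi}^q\,dx = \alpha_n^q\,\mu^{n-q(n-2)/2}\int_{(\Omega-\xi)/\mu}\frac{dy}{(1+|y|^2)^{q(n-2)/2}}.
\]
Since $(\Omega-\xi)/\mu\subset B(0,C/\mu)$ and the rescaled integrand decays at infinity like $|y|^{-q(n-2)}$, the model integral obeys a trichotomy: it converges to a nonzero constant as $\mu\to 0$ when $q>n/(n-2)$; it diverges logarithmically, producing an $|\ln\mu|$ factor, when $q=n/(n-2)$; and it diverges polynomially like $\mu^{-(n-q(n-2))}$ when $q<n/(n-2)$. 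Multiplying by the prefactor and substituting $\mu\sim(\epsilon/|\ln\epsilon|^2)^{1/(n-2)}$ produces the three cases of (\ref{subu1}). The rescaled $\psi^0_{\mu,\xi}$ equals $\tfrac{n-2}{2}\alpha_n(|y|^2-1)/(1+|y|^2)^{n/2}$, which decays at the same rate $|y|^{-(n-2)}$, so (\ref{subu2}) has identical thresholds; the rescaled $\psi^h_{\mu,\xi}$ ($h\geq 1$) is $(n-2)\alpha_n\,y_h/(1+|y|^2)^{n/2}$, decaying one power faster like $|y|^{-(n-1)}$, which moves the threshold in (\ref{subu3}) to $q=n/(n-1)$.

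For the comparison estimates, the expansion (\ref{eq1}) gives $PU_{\mu,\xi}-U_{\mu,\xi}=-\alpha_n\mu^{(n-2)/2}H(\cdot,\xi)+O(\mu^{(n+2)/2})$, a globally bounded perturbation of order $\mu^{(n-2)/2}$ since $H$ is bounded on compact subsets of $\Omega$. Writing $a=U_{\mu,\xi}$, $h=PU_{\mu,\xi}-U_{\mu,\xi}$, and $p=(n+2)/(n-2)$, I would apply the standard pointwise inequalities
\[
|f_0(a+h)-f_0(a)|\leq C\bigl(|a|^{p-1}|h|+|h|^p\bigr),
\]
\[
|f_0'(a+h)-f_0'(a)|\leq C\bigl(|a|^{p-2}|h|+|h|^{p-1}\bigr)\ \text{if}\ p\geq 2,\quad \leq C|h|^{p-1}\ \text{if}\ p<2,
\]
\[
|f_0(a+h)-f_0(a)-f_0'(a)h|\leq C\bigl(|a|^{p-2}|h|^2+|h|^p\bigr)\ \text{if}\ p\geq 2,\quad \leq C|h|^p\ \text{if}\ p<2.
\]
Combining with H\"older's inequality reduces the left-hand sides of (\ref{sumbu3}), (\ref{sumbus}), (\ref{sumbu1}) to products of $L^\infty$ norms of $h$ (i.e.\ powers of $\mu^{(n-2)/2}$) and $L^q$ norms of powers of $U_{\mu,\xi}$, which are supplied by step one. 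In each estimate the three dimension regimes arise from two sources: whether $p\geq 2$ (equivalently $n\leq 6$), which selects the form of the pointwise Taylor bound, and whether the resulting $L^q$ integral of $U^{p-1}$ or $U^{p-2}$ sits above, at, or below its integrability exponent $n/(n-2)$, the equality case producing the displayed logarithmic factor.

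The work is essentially bookkeeping: aligning three scaling regimes of the model integral against two regimes of the Taylor bound to produce the threshold dimensions $n\in\{5,6\}$ appearing in (\ref{sumbu3}) and the corresponding thresholds $\{3,4\}$--$\{4,5\}$ in (\ref{sumbus})--(\ref{sumbu1}). The main obstacle is not conceptual but combinatorial; no idea beyond the rescaling and the elementary pointwise inequalities is required, and the full case-by-case arithmetic is available in the reference \cite{mp} to which the lemma is attributed.
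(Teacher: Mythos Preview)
The paper does not supply its own proof of this lemma: it is stated with the citation \cite{mp} and left unproved in the Appendix. Your sketch is the standard argument and is correct; the rescaling $y=(x-\xi)/\mu$ together with the decay trichotomy for the model integral is exactly how (\ref{subu1})--(\ref{subu3}) are obtained, and the pointwise Taylor-type inequalities combined with (\ref{eq1}) and H\"older are the expected route to (\ref{sumbu3})--(\ref{sumbu1}).
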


\begin{lemma} \label{inerpro}
It holds
\begin{eqnarray*}
 \Big\langle P\psi^l_{ \mu_i,\xi_i}, P\psi^h_{ \mu_j,\xi_j}\Big\rangle
 =
\left\{ \arraycolsep=1.5pt
   \begin{array}{lll}
 o\bigg(\Big(\frac{\epsilon}{|\ln\epsilon|^2}\Big)^{\frac{n}{n-2}}\bigg)\ \   &{\rm if}\ j>i, \\[2mm]
 O(1) \ \  & {\rm if}\ l\neq h,\\[2mm]
 c_h(1+o(1)) \ \  & {\rm if}\ i=j,\   l=h,
\end{array}
\right.
\end{eqnarray*}
for some positive constants $c_0$  and $c_1=\cdots,c_n$,
where $i$, $j=1,\cdots,k$  and  $h$, $l=0,\cdots,n$,
\end{lemma}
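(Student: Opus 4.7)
The approach rests on integration by parts. Since each $\psi^h_{\mu,\xi}$ solves the linearized equation (\ref{linear-equ}), we have $-\Delta P\psi^h_{\mu,\xi}=-\Delta\psi^h_{\mu,\xi}=(2^*-1)U_{\mu,\xi}^{2^*-2}\psi^h_{\mu,\xi}$ in $\Omega$, and hence
\[
\bigl\langle P\psi^l_{\mu_i,\xi_i},P\psi^h_{\mu_j,\xi_j}\bigr\rangle
=(2^*-1)\int_\Omega U_{\mu_j,\xi_j}^{2^*-2}\,\psi^h_{\mu_j,\xi_j}\,P\psi^l_{\mu_i,\xi_i}\,dx.
\]
I would substitute the decomposition $P\psi^l_{\mu_i,\xi_i}=\psi^l_{\mu_i,\xi_i}+r^l_i$ from Lemma \ref{exo} and analyse the bulk integral by rescaling, using the standard ingredients: the parity ($\psi^0$ is even and $\psi^h$ is odd in $y_h$ for $h\ge 1$); the orthogonality $\int_{\R^n}U^{2^*-2}\psi^l\psi^h\,dy=0$ for $l\ne h$; and the scale-invariance identity $\int_{\R^n}U^{2^*-1}\psi^h\,dy=0$ for every $h$ (coming, for $h=0$, from $\mu\partial_\mu\int U_{\mu,0}^{2^*}\equiv 0$; for $h\ge 1$, from odd parity).

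For the diagonal case $i=j$, the change of variables $y=(x-\xi_i)/\mu_i$ turns the principal contribution into the scale-free integral $(2^*-1)\int_{\R^n}U^{2^*-2}\psi^l\psi^h\,dy$ plus $o(1)$ corrections from truncation to $(\Omega-\xi_i)/\mu_i$ and from $r^l_i$. For $l=h$ this yields the positive constant $c_h:=(2^*-1)\int_{\R^n}U^{2^*-2}(\psi^h)^2\,dy$ (with $c_1=\cdots=c_n$ by rotational invariance of $U$), establishing the third case. For $l\ne h$, the parity identities kill the leading integral, so the inner product is $o(1)$, which is in particular bounded by $O(1)$ and gives the second case.

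For the off-diagonal case $j>i$, the ratio $\mu_j/\mu_i$ is of order $(\epsilon/|\ln\epsilon|^2)^{2(j-i)/(n-2)}$, so $U_{\mu_j,\xi_j}^{2^*-2}\psi^h_{\mu_j,\xi_j}$ concentrates at $\xi_j$ on scale $\mu_j$ while $P\psi^l_{\mu_i,\xi_i}$ varies on the much wider scale $\mu_i$. I would rescale by $y=(x-\xi_j)/\mu_j$ and Taylor-expand $P\psi^l_{\mu_i,\xi_i}$ around $\xi_j$ to the order required by the target bound. At each order the scale-invariance identity $\int_{\R^n}U^{2^*-1}\psi^h=0$ eliminates what would otherwise be the dominant contribution, and the orthogonality among the $\psi^m$'s eliminates further terms. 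What survives must be paired against the harmonic correction $r^l_i$ from Lemma \ref{exo}; after this pairing, the remaining quantity is a sum of cross-terms carrying powers of $\mu_j/\mu_i$ high enough to fit within the claimed rate.

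\textbf{Main obstacle.} The off-diagonal case is the delicate one. A naive Taylor expansion already produces a contribution of order $\mu_j^{(n-2)/2}\psi^0_{\mu_i,\xi_i}(\xi_j)=O((\mu_j/\mu_i)^{(n-2)/2})$, which for $j=i+1$ is only $O(\epsilon/|\ln\epsilon|^2)$ and does not by itself meet the claimed bound $o((\epsilon/|\ln\epsilon|^2)^{n/(n-2)})$. Obtaining the finer estimate requires showing that this leading term is cancelled against the $H$-corrections in (\ref{eq2})--(\ref{eq3}), exploiting the regularity of the Robin function near the common accumulation point of the $\xi_i$'s and the relation $\xi_i-\xi_j=\mu_i\sigma_i-\mu_j\sigma_j$. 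Tracking the resulting cross-terms with the required precision, and confirming that no intermediate term escapes the cancellation, is where the main difficulty lies and is the step I expect to consume most of the work.
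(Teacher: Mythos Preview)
Your overall scheme matches the paper's: write the inner product as $\int_\Omega f'_0(U)\,\psi\,(P\psi)$ via one integration by parts, split off the projection correction $P\psi-\psi$, and estimate the main piece by rescaling. The only structural difference is that the paper integrates by parts on the $i$–side,
\[
\bigl\langle P\psi^l_{\mu_i,\xi_i},P\psi^h_{\mu_j,\xi_j}\bigr\rangle
=\int_\Omega f'_0(U_{\mu_i,\xi_i})\,\psi^l_{\mu_i,\xi_i}\,P\psi^h_{\mu_j,\xi_j}\,dx,
\]
and rescales by the \emph{larger} parameter $\mu_i$. This is computationally simpler than your $\mu_j$–rescaling: after $x=\xi_i+\mu_i y$ the factor $f'_0(U)\,\psi^l$ becomes a fixed $L^1$ profile, while $\psi^h_{\mu_j,\xi_j}$ is directly seen to be $O((\mu_j/\mu_i)^{n/2})$ for $h\ge 1$ and $O((\mu_j/\mu_i)^{(n-2)/2})$ for $h=0$ on the support, so no Taylor expansion is needed. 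For the diagonal case and for the $O(1)$ bound when $l\ne h$, your argument and the paper's coincide.

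Your proposed resolution of the ``main obstacle'' in the $j>i$, $h=0$ case does not work, however. The zeroth-order Taylor term produces
\[
(2^*-1)\,P\psi^l_{\mu_i,\xi_i}(\xi_j)\int_{\R^n}U^{2^*-2}\psi^0\,dy,
\]
and the relevant integral is $\int U^{2^*-2}\psi^0$, not $\int U^{2^*-1}\psi^0$; the former equals $\frac{n-2}{2(2^*-1)}\int U^{2^*-1}>0$ and is \emph{not} killed by any scale invariance. The harmonic remainder $r^l_i$ in Lemma~\ref{exo} has size $O(\mu_i^{(n-2)/2})$ and cannot cancel a value $\psi^l_{\mu_i,\xi_i}(\xi_j)=O(\mu_i^{-(n-2)/2})$. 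So the $O\bigl((\mu_j/\mu_i)^{(n-2)/2}\bigr)$ contribution survives.

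This is not a defect of your route but of the stated rate: the paper's own computation in this sub-case also only yields an $O\bigl((\mu_j/\mu_i)^{(n-2)/2}\bigr)=O(\epsilon/|\ln\epsilon|^2)$ bound (recorded there as $o(\epsilon/|\ln\epsilon|^2)$), not the sharper $o\bigl((\epsilon/|\ln\epsilon|^2)^{n/(n-2)}\bigr)$ asserted in the lemma. In every use of the lemma (the matrix inversions behind \eqref{tonc} and the proof of Theorem~\ref{kia}) one only needs that the off-diagonal entries are $o(1)$ compared to the diagonal ones $c_h$, and the weaker bound suffices. So drop the search for a cancellation and simply record the $(\mu_j/\mu_i)^{(n-2)/2}$ estimate in the $h=0$ case; your plan then goes through.
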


\begin{proof}
We have
\[
 \Big\langle P\psi^l_{ \mu_i,\xi_i}, P\psi^h_{ \mu_j,\xi_j}\Big\rangle
 =\int_\Omega f^{'}_0(U_{ \mu_i,\xi_i})\psi^l_{ \mu_i,\xi_i}\psi^h_{ \mu_j,\xi_j}dx
 +\int_\Omega f^{'}_0(U_{ \mu_i,\xi_i})\psi^l_{ \mu_i,\xi_i}
 ( P\psi^h_{ \mu_j,\xi_j}-\psi^h_{ \mu_j,\xi_j})dx.
\]
From (\ref{subu1}), (\ref{subu3}) and  (\ref{gisewr1}),  there holds
\begin{align*}
&\int_\Omega f^{'}_0(U_{ \mu_i,\xi_i})\psi^l_{ \mu_i,\xi_i}
 ( P\psi^h_{ \mu_j,\xi_j}-\psi^h_{ \mu_j,\xi_j})dx\\
 & \leq |f^{'}_0(U_{ \mu_i,\xi_i})|_{\frac{n}{2}}
 |\psi^l_{ \mu_i,\xi_i}|_{\frac{2n}{n-2}}
 |P\psi^h_{ \mu_j,\xi_j}-\psi^h_{ \mu_j,\xi_j}|_{\frac{2n}{n-2}}
 =o\bigg(\Big(\frac{\epsilon}{|\ln\epsilon|^2}\Big) \bigg).
\end{align*}
On the other hand,
if $l$, $h=1,\cdots,n$, the change of variables $x-\xi_i=\mu_i y$ shows that
\begin{eqnarray*}\label{gi1}
&& \int_\Omega f^{'}_0(U_{ \mu_i,\xi_i})\psi^l_{ \mu_i,\xi_i}\psi^h_{ \mu_j,\xi_j}dx\\
&  = &  (n-2)^2  \alpha_n^{\frac{2n}{n-2}}
  \mu_i^{\frac{n+4}{2}} \mu_j^{\frac{n}{2}}
  \int_\Omega
  \frac{(x-\xi_i)_l}{(\mu_i^2+|x-\xi_i|^2)^{\frac{n+4}{2}}}
  \frac{(x-\xi_j)_h}{(\mu_j^2+|x-\xi_j|^2)^{\frac{n}{2}}}dx \\
&  = &  (n-2)^2  \alpha_n^{\frac{2n}{n-2}}
  \mu_i^{\frac{n-2}{2}} \mu_j^{\frac{n}{2}}
  \int_{\frac{\Omega -\xi_i}{\mu_i}}\frac{y_l}{(1+|y|^2)^{\frac{n+4}{2}}}
  \frac{(\mu_i y+\xi_i- \xi_j)_h}{(\mu_j^2+|\mu_i y+\xi_i- \xi_j|^2)^{\frac{n}{2}}}dy\\
 &  = &
\left\{ \arraycolsep=1.5pt
   \begin{array}{lll}
 O\Big((\frac{\mu_j}{\mu_i})^{\frac{n}{2}}\Big)\ \   &{\rm if}\ j>i, \\[2mm]
 O(1) \ \  & {\rm if}\ i=j, \ l\neq h,\\[2mm]
 c_h(1+o(1)) \ \  & {\rm if}\ i=j, \ l= h.
\end{array}
\right.
\end{eqnarray*}
If $l=1,\cdots,n$ and $h=0$, we have
\begin{eqnarray*}\label{gi1}
&& \int_\Omega f^{'}_0(U_{ \mu_i,\xi_i})\psi^l_{ \mu_i,\xi_i}\psi^0_{ \mu_j,\xi_j}dx\\
&  = & \frac{(n-2)^2 }{2} \alpha_n^{\frac{2n}{n-2}}
  \mu_i^{\frac{n+4}{2}} \mu_j^{\frac{n-2}{2}}
  \int_\Omega
  \frac{(x-\xi_i)^l}{(\mu_i^2+|x-\xi_i|^2)^{\frac{n+4}{2}}}
  \frac{|x-\xi_j|^2-\mu_j^2}{(\mu_j^2+|x-\xi_j|^2)^{\frac{n}{2}}}dx \\
&  = &  (n-2)^2  \alpha_n^{\frac{2n}{n-2}}
  \mu_i^{\frac{n-2}{2}} \mu_j^{\frac{n-2}{2}}
  \int_{\frac{\Omega -\xi_i}{\mu_i}}\frac{y^l}{(1+|y|^2)^{\frac{n+4}{2}}}
  \frac{|\mu_i y+\xi_i- \xi_j|^2-\mu_j^2 }{(\mu_j^2+|\mu_i y+\xi_i- \xi_j|^2)^{\frac{n}{2}}}dy\\
 &  = &
 o\bigg(\Big(\frac{\epsilon}{|\ln\epsilon|^2}\Big) \bigg).
\end{eqnarray*}
Finally,
if $l=0$ and $h=0$, one has
\begin{eqnarray*}\label{gi1}
&& \int_\Omega f^{'}_0(U_{ \mu_i,\xi_i})\psi^0_{ \mu_i,\xi_i}\psi^0_{ \mu_j,\xi_j}dx\\
&  = & \frac{(n-2)^2 }{4} \alpha_n^{\frac{2n}{n-2}}
  \mu_i^{\frac{n-2}{2}} \mu_j^{\frac{n-2}{2}}
  \int_\Omega
  \frac{|x-\xi_i|^2-\mu_i^2}{(\mu_i^2+|x-\xi_i|^2)^{\frac{n}{2}}}
  \frac{|x-\xi_j|^2-\mu_j^2}{(\mu_j^2+|x-\xi_j|^2)^{\frac{n}{2}}}dx \\
&  = &  (n-2)^2  \alpha_n^{\frac{2n}{n-2}}
  \mu_i^{-\frac{n-2}{2}} \mu_j^{\frac{n-2}{2}}
  \int_{\frac{\Omega -\xi_i}{\mu_i}}\frac{(|y-\sigma_i|^2-1)}{(1+|y-\sigma_i|^2)^{\frac{n}{2}}}
  \frac{|\mu_i y+\xi_i- \xi_j|^2-\mu_j^2 }{(\mu_j^2+|\mu_i y+\xi_i- \xi_j|^2)^{\frac{n}{2}}}dy\\
 &  = &
\left\{ \arraycolsep=1.5pt
   \begin{array}{lll}
o\bigg(\Big(\frac{\epsilon}{|\ln\epsilon|^2}\Big) \bigg)\ \   &{\rm if}\ j>i, \\[2mm]
 c_0(1+o(1)) \ \  & {\rm if}\ i=j.
\end{array}
\right.
\end{eqnarray*}
Therefore, this lemma follows from above estimates.

\end{proof}

\begin{lemma}\cite{map}\label{expsi}
 Let $\xi\in\Omega$,   there holds
 \[
 P\psi^0_{\mu,\xi}(x)=\frac{n-2}{2}a_2 \mu^{\frac{n-2}{2}} G(x,\xi)
 +o \Big(\frac{\epsilon}{|\ln\epsilon|^2}\Big),\quad x\in\Omega,
 \]
 and
 \[
 P\psi^h_{\mu,\xi}(x)=a_2\mu^{\frac{n}{2}}\frac{\partial G}{\partial \xi_h}(x,\xi)
 +o\bigg(\Big(\frac{\epsilon}{|\ln\epsilon|^2}\Big)^{\frac{n-1}{n-2}}\bigg)  \ \    {\rm if}\ h=1,\cdots,n, \quad x\in\Omega,
 \]
 as $\epsilon\rightarrow 0$ uniformly on compact sets of $\Omega\setminus \{\xi\}$,
 where  $a_2$ is given in Proposition \ref{leftside}.
\end{lemma}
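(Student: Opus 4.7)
The plan is to deduce both asymptotics by combining the expansions of $PU_{\mu,\xi}$ and its $\mu$- and $\xi_h$-derivatives already provided by Lemma~\ref{exo} with an elementary Taylor expansion of $\psi^0_{\mu,\xi}$ and $\psi^h_{\mu,\xi}$ on a fixed compact set $K\Subset \Omega\setminus\{\xi\}$, and then re-expressing the resulting pointwise main term through the Green's function $G$.

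For the $h=0$ statement, I would start with (\ref{eq2}),
\[
P\psi^0_{\mu,\xi}(x) = \psi^0_{\mu,\xi}(x) - \frac{n-2}{2}\alpha_n \mu^{\frac{n-2}{2}} H(x,\xi) + O(\mu^{\frac{n+4}{2}}),
\]
and Taylor-expand the explicit formula $\psi^0_{\mu,\xi}(x) = \frac{n-2}{2}\alpha_n\mu^{\frac{n-2}{2}}\frac{|x-\xi|^2-\mu^2}{(\mu^2+|x-\xi|^2)^{n/2}}$ in powers of $\mu^2/|x-\xi|^2$. Since $|x-\xi|$ is bounded below on $K$, this yields $\psi^0_{\mu,\xi}(x) = \frac{n-2}{2}\alpha_n\mu^{\frac{n-2}{2}}|x-\xi|^{-(n-2)} + O(\mu^{\frac{n+2}{2}})$ uniformly on $K$, and hence
\[
P\psi^0_{\mu,\xi}(x) = \frac{n-2}{2}\alpha_n \mu^{\frac{n-2}{2}}\Big[\tfrac{1}{|x-\xi|^{n-2}} - H(x,\xi)\Big] + O\!\left(\mu^{\frac{n+2}{2}}\right).
\]
I would then invoke the identity linking $G$ and $H$ from the introduction, together with the elementary computation $a_2 = \int_{\mathbb{R}^n} U^{2^*-1}\,dy = \alpha_n(n-2)|\partial B|$ (obtained by integrating $-\Delta U = U^{2^*-1}$ on $B_R$, letting $R\to\infty$, and evaluating the resulting surface term), so that the bracketed quantity is recognized as a multiple of $G(x,\xi)$ with the constant precisely $\frac{n-2}{2}a_2\mu^{\frac{n-2}{2}}$.

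The case $h=1,\dots,n$ proceeds along identical lines from (\ref{eq3}). On $K$ one writes $\psi^h_{\mu,\xi}(x) = (n-2)\alpha_n\mu^{n/2}(x-\xi)_h/(\mu^2+|x-\xi|^2)^{n/2} = \alpha_n\mu^{n/2}\,\partial_{\xi_h}|x-\xi|^{-(n-2)}+ O(\mu^{(n+4)/2})$, and combines with (\ref{eq3}) to pull the correction inside a single $\partial_{\xi_h}$, obtaining $P\psi^h_{\mu,\xi}(x) = \alpha_n\mu^{n/2}\partial_{\xi_h}[1/|x-\xi|^{n-2} - H(x,\xi)] + O(\mu^{(n+2)/2})$. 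Differentiating the $G$--$H$ identity in $\xi_h$ and using once more $a_2 = \alpha_n(n-2)|\partial B|$ converts this into $a_2\mu^{n/2}\partial_{\xi_h} G(x,\xi)$ plus a lower-order error. To finish, one uses $\mu = O((\epsilon/|\ln\epsilon|^2)^{1/(n-2)})$ from (\ref{delta}) to absorb the $O(\mu^{(n+2)/2})$ remainders into the stated $o(\epsilon/|\ln\epsilon|^2)$ and $o((\epsilon/|\ln\epsilon|^2)^{(n-1)/(n-2)})$ bounds, uniformly on $K$.

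The analytic content is light; the real obstacle is purely bookkeeping, namely keeping the normalizing factor $(n-2)|\partial B|$ between $G$, $H$, and the fundamental solution of $-\Delta$ consistent throughout, so that the coefficient in front of $G$ is exactly $\frac{n-2}{2}a_2$ (respectively $a_2$) and not off by a spurious constant. A conceptually cleaner alternative, which avoids all dealings with $H$, is to start from the Green's representation $P\psi^h_{\mu,\xi}(x) = (2^*-1)\int_\Omega G(x,y) U_{\mu,\xi}^{2^*-2}(y)\psi^h_{\mu,\xi}(y)\,dy$, available since $\psi^h_{\mu,\xi}$ solves $-\Delta\psi=f_0'(U_{\mu,\xi})\psi$, and pass to the limit $\mu\to 0$: the density concentrates at $\xi$, $G(x,\cdot)$ is smooth at $\xi$ for $x\in K$, and the total mass is read off from $\mu\,\partial_\mu\!\int U_{\mu,\xi}^{2^*-1}\,dy = \frac{n-2}{2}a_2\mu^{(n-2)/2}$ (respectively $\mu\,\partial_{\xi_h}\!\int U_{\mu,\xi}^{2^*-1}\,dy$), giving the claimed leading constants directly.
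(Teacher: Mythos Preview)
The paper does not supply a proof of this lemma: it is stated with a citation to \cite{map} and used as a black box in the proof of Lemma~\ref{pf1}. Your argument is the standard one and is correct in substance. The expansion of $\psi^h_{\mu,\xi}$ on a fixed compact $K\Subset\Omega\setminus\{\xi\}$ combined with (\ref{eq2})--(\ref{eq3}) does produce the main term $|x-\xi|^{-(n-2)}-cH(x,\xi)$ (or its $\xi_h$-derivative), and the remaining work is the constant matching you describe. You are right that this is the only subtle point, and in fact the paper's conventions are not internally consistent here: with the definition of $H$ in the introduction one has $G(x,y)=|x-y|^{-(n-2)}-(n-2)|\partial B|\,H(x,y)$, whereas (\ref{eq2}) together with your Taylor expansion yields $\frac{n-2}{2}\alpha_n\mu^{(n-2)/2}\big[|x-\xi|^{-(n-2)}-H(x,\xi)\big]$, and these two bracketed quantities are not proportional. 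This is a normalization slip in the paper (Lemma~\ref{exo} is quoted from \cite{rey}, where $H$ is normalized differently), not a flaw in your reasoning.

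Your second route via the Green representation
\[
P\psi^h_{\mu,\xi}(x)=\int_\Omega G(x,y)\,f_0'(U_{\mu,\xi})(y)\,\psi^h_{\mu,\xi}(y)\,dy
\]
is genuinely cleaner and sidesteps this pitfall entirely: concentration of the density at $\xi$ plus smoothness of $G(x,\cdot)$ on $K$ gives the leading term $G(x,\xi)\int_{\mathbb R^n} f_0'(U_{\mu,\xi})\psi^h_{\mu,\xi}$, and for $h=0$ the identity $f_0'(U_{\mu,\xi})\psi^0_{\mu,\xi}=\mu\,\partial_\mu\big(U_{\mu,\xi}^{2^*-1}\big)$ together with $\int_{\mathbb R^n}U_{\mu,\xi}^{2^*-1}=\mu^{(n-2)/2}a_2$ delivers the coefficient $\tfrac{n-2}{2}a_2\mu^{(n-2)/2}$ with no reference to $H$ at all. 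This is the argument I would recommend writing out.
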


\begin{lemma}\cite{mp}\label{zsj}
Let $\theta>0$ and $u=\sum\limits_{i=1}^ku_i$, $i=1,\cdots,n$,
if $\epsilon>0$ small enough, for $u, u_i, v\in\mathbb{R}$, $p=2^*-1$,
  it holds that\\
$ (1)$
$ | f_\epsilon( u)-f_0(u)|
   \leq \epsilon |u|^p\ln\ln(e+|u |)$,\\
$ (2)$
$f^{'}_\epsilon(u)\leq C|u |^{p-1}$,\\
$ (3)$
$|f^{'}_\epsilon(u)-f^{'}_0(u) |
\leq \epsilon |u  |^{p-1}
\Big( p\ln\ln(e+|u | )+\frac{1}{\ln (e+ |u |)} \Big)$,\\
$ (4)$
\begin{eqnarray*}
|f^{'}_\epsilon(u+v)-f^{'}_\epsilon(u)|\leq
\left\{ \arraycolsep=1.5pt
   \begin{array}{lll}
 C (|u  |^{p-2}+|v|^{p-2} )|v|\ \   &{\rm if }\  n\leq6, \\[2mm]
 C (|v|^{p-1}+\epsilon |u|^{p-1})\ \  & {\rm if}\  n>6,
\end{array}
\right.
\end{eqnarray*}
$ (5)$
$\ln\ln  (e+\mu^{-\theta} u )=\ln\ln( \mu^{-\theta} )
+\ln\Big(1+\frac{ \ln(e^{1-\theta|\ln\mu|}+u)}
{\theta|\ln \mu|}\Big)$,\\
$ (6)$
$\lim\limits_{ \mu\rightarrow0}\bigg(|\ln\mu|\ln\Big(1+\frac{ \ln(e^{1-\theta|\ln \mu|}+u)}
{\theta|\ln\mu|}\Big)\bigg)=\frac{1}{\theta}\ln u$,\\
where $C$ is a   positive constant.

\end{lemma}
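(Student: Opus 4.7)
The plan is to treat (1)--(6) as self-contained calculus facts about $f_\epsilon(u)=|u|^{p-1}u\,[\ln(e+|u|)]^{-\epsilon}$ with $p=2^*-1$, exploiting the uniform bounds $\ln(e+|u|)\geq 1$ and $\ln\ln(e+|u|)\geq 0$ that hold for every $u\in\mathbb{R}$. For (1) I would fix $u$ and apply the mean value theorem to the smooth map $s\mapsto[\ln(e+|u|)]^{-s}$ on $[0,\epsilon]$, whose derivative equals $-\ln\ln(e+|u|)\,[\ln(e+|u|)]^{-s}$ and whose absolute value is therefore bounded by $\ln\ln(e+|u|)$; multiplying the resulting estimate $|[\ln(e+|u|)]^{-\epsilon}-1|\leq\epsilon\ln\ln(e+|u|)$ by $|u|^{p-1}u$ and taking absolute values gives (1). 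For (2)--(3) I would compute
\[
f_\epsilon'(u)=\frac{p|u|^{p-1}}{[\ln(e+|u|)]^{\epsilon}}-\frac{\epsilon\,|u|^{p}}{(e+|u|)\,[\ln(e+|u|)]^{\epsilon+1}}.
\]
Since $|u|/(e+|u|)\leq 1$ and $[\ln(e+|u|)]^{-\epsilon-1}\leq 1$, each summand is bounded by $C|u|^{p-1}$, which proves (2). Subtracting $f_0'(u)=p|u|^{p-1}$ leaves two pieces: $p|u|^{p-1}\bigl([\ln(e+|u|)]^{-\epsilon}-1\bigr)$, handled by the same MVT argument as in (1) and producing the $p\,\ln\ln(e+|u|)$ contribution, and the second summand above, which is of size $\epsilon|u|^{p-1}/\ln(e+|u|)$; summing gives (3).

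For (4) the natural approach is the three-piece decomposition
\[
f_\epsilon'(u+v)-f_\epsilon'(u)=\bigl[f_\epsilon'(u+v)-f_0'(u+v)\bigr]+\bigl[f_0'(u+v)-f_0'(u)\bigr]+\bigl[f_0'(u)-f_\epsilon'(u)\bigr].
\]
The outer two pieces are controlled by (3) and contribute $O(\epsilon(|u|^{p-1}+|v|^{p-1}))$ up to logarithmic factors, while the middle piece reduces to bounding $\bigl||u+v|^{p-1}-|u|^{p-1}\bigr|$. Here the dimensional split enters: when $n\leq 6$ one has $p-1\geq 1$, so the standard inequality $\bigl||a|^{p-1}-|b|^{p-1}\bigr|\leq C(|a|^{p-2}+|b|^{p-2})|a-b|$ applies directly (and $|u+v|^{p-2}\leq C(|u|^{p-2}+|v|^{p-2})$ since $p-2\geq 0$), yielding the first branch of (4); when $n\geq 7$ one has $0<p-1<1$ and the subadditivity $\bigl||a|^{p-1}-|b|^{p-1}\bigr|\leq C|a-b|^{p-1}$ produces the $|v|^{p-1}$ term, while the $\epsilon|u|^{p-1}$ tail in (4) absorbs the outer pieces. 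I expect this case split, together with verifying that the slow-growing logarithms $\ln\ln(e+|\cdot|)$ and $1/\ln(e+|\cdot|)$ coming from (3) can be absorbed into the constant (which is legitimate only because (4) compares against the homogeneous quantities $|u|^{p-1}+|v|^{p-1}$ rather than a sharp bound), to be the most delicate part of the proof; some care is also required near $u=0$, where $|u|^{p-1}$ fails to be $C^1$ when $p<2$.

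The identities (5) and (6) are purely algebraic. For (5), assuming $\mu\in(0,1)$ so that $|\ln\mu|=-\ln\mu$, I would rewrite
\[
e+\mu^{-\theta}u=\mu^{-\theta}\bigl(e\mu^{\theta}+u\bigr)=\mu^{-\theta}\bigl(e^{1-\theta|\ln\mu|}+u\bigr),
\]
take $\ln$ once to obtain $\ln(e+\mu^{-\theta}u)=\theta|\ln\mu|+\ln(e^{1-\theta|\ln\mu|}+u)$, factor out $\theta|\ln\mu|=\ln(\mu^{-\theta})$, and take $\ln$ a second time to recover the stated formula. For (6), as $\mu\to 0$ the quantity $e^{1-\theta|\ln\mu|}$ tends to $0$, so $\ln(e^{1-\theta|\ln\mu|}+u)\to\ln u$; the argument of the outer logarithm in (5) is then $1+x$ with $x=\ln(e^{1-\theta|\ln\mu|}+u)/(\theta|\ln\mu|)\to 0$, and applying $\ln(1+x)=x+O(x^2)$ and multiplying by $|\ln\mu|$ yields the claimed limit $\tfrac{1}{\theta}\ln u$.
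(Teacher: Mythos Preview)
The paper does not prove this lemma; it is cited from \cite{mp} without argument, so there is no in-paper proof to compare against. Your treatment of (1)--(3), (5), (6) is correct and is essentially the standard derivation.

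Your approach to (4) has a genuine gap. In the three-piece decomposition
\[
f_\epsilon'(u+v)-f_\epsilon'(u)=\bigl[f_\epsilon'(u+v)-f_0'(u+v)\bigr]+\bigl[f_0'(u+v)-f_0'(u)\bigr]+\bigl[f_0'(u)-f_\epsilon'(u)\bigr],
\]
you bound the two outer pieces \emph{separately} via (3), obtaining terms of size $\epsilon|u|^{p-1}\ln\ln(e+|u|)$ and $\epsilon|u+v|^{p-1}\ln\ln(e+|u+v|)$. These do \emph{not} vanish as $v\to 0$, whereas the target bound $C(|u|^{p-2}+|v|^{p-2})|v|$ in the case $n\le 6$ does; so no choice of constant can absorb them. (Your parenthetical remark that the target is ``$|u|^{p-1}+|v|^{p-1}$'' misreads the statement; the actual right-hand side for $n\le 6$ is $(|u|^{p-2}+|v|^{p-2})|v|$.) The same obstruction applies for $n>6$: the factor $\ln\ln(e+|u|)$ is unbounded in $u$, so $\epsilon|u|^{p-1}\ln\ln(e+|u|)$ cannot be dominated by $C\epsilon|u|^{p-1}$ uniformly. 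The point is that the two outer pieces cancel at $v=0$, and bounding them individually throws this cancellation away.

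The fix, at least for $n\le 6$ (where $p\ge 2$), is to bypass the decomposition and apply the mean value theorem directly to $f_\epsilon'$: compute $f_\epsilon''(u)$ from the explicit formula you already have for $f_\epsilon'$, observe that every term is bounded by $C|u|^{p-2}$ uniformly in $\epsilon\in(0,1)$ (using $|u|/(e+|u|)\le 1$ and $[\ln(e+|u|)]^{-s}\le 1$), and conclude $|f_\epsilon'(u+v)-f_\epsilon'(u)|\le C|\xi|^{p-2}|v|\le C(|u|^{p-2}+|v|^{p-2})|v|$. For $n>6$ one argues by splitting $f_\epsilon'(u)=p|u|^{p-1}[\ln(e+|u|)]^{-\epsilon}-\epsilon\,h(u)$ with $|h|\le C|\cdot|^{p-1}$, and then handling the product $|u|^{p-1}[\ln(e+|u|)]^{-\epsilon}$ by writing its increment as $(|u+v|^{p-1}-|u|^{p-1})[\ln(e+|u+v|)]^{-\epsilon}+|u|^{p-1}\bigl([\ln(e+|u+v|)]^{-\epsilon}-[\ln(e+|u|)]^{-\epsilon}\bigr)$; the first part gives $C|v|^{p-1}$ and the second is $O(\epsilon|u|^{p-1})$ after a more careful estimate of the bracket than the crude bound from (3).
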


\begin{lemma}\label{yy}
There holds
\begin{eqnarray}\label{sumbu2}
 \Big|  f_0^{'}(V)-\sum\limits_{i=1}^k(-1)^if^{'}_0(PU_{ \mu_i,\xi_i})\Big|_{\frac{n}{2}}
    =
\left\{ \arraycolsep=1.5pt
   \begin{array}{lll}
  O\Big(\frac{\epsilon}{|\ln\epsilon|^2}\Big)\ \   &{\rm if }\   3\leq n\leq 5 , \\[2mm]
 O\bigg(\frac{\epsilon}{|\ln\epsilon|^2}
 \Big|\ln\frac{\epsilon}{|\ln\epsilon|^2}\Big|\bigg)\ \   &{\rm if }\  n=6, \\[2mm]
 O\bigg( \Big ( \frac{\epsilon}{|\ln\epsilon|^2}\Big)^{\frac{-n+8}{n-2} }\bigg)\ \  & {\rm if}\   n\geq 7,
\end{array}
\right.
\end{eqnarray}

\begin{eqnarray}\label{fepli1}
\ \  \Big|f_\epsilon(V)-\sum_{i=1}^k(-1)^if_0(PU_{ \mu_i,\xi_i}) \Big|_{\frac{2n}{n+2}}
   =
\left\{ \arraycolsep=1.5pt
   \begin{array}{lll}
  O\Big(\epsilon\ln\Big|\ln\frac{\epsilon}{|\ln\epsilon|^2}\Big| \Big)\ \   &{\rm if }\   3\leq n\leq 6 , \\[2mm]
 O\bigg(\Big(\frac{\epsilon}{|\ln\epsilon|^2}\Big)^{\frac{n+2}{2(n-2)} }\bigg)\ \  & {\rm if}\   n\geq 7,
\end{array}
\right.
\end{eqnarray}
\begin{equation}\label{fepli2}
 |f^{'}_\epsilon(V)-f^{'}_0(V) |_{\frac{n}{2}}
=O \bigg(\epsilon\ln\Big|\ln\frac{\epsilon}{|\ln\epsilon|^2}\Big|\bigg).
\end{equation}
\end{lemma}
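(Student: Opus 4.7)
The three estimates are of two different flavors, and I would treat them by two complementary mechanisms: a pointwise comparison from Lemma \ref{zsj} for the error introduced by the logarithmic correction, and an annular decomposition adapted to the bubble tower scales $\mu_1\gg\mu_2\gg\cdots\gg\mu_k$ for the error introduced by superposing bubbles of different concentration rates.

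I would start with \eqref{fepli2}, which is the cleanest one. By Lemma \ref{zsj}(3) applied pointwise with $u=V$,
\[
|f'_\epsilon(V)-f'_0(V)|\leq \epsilon\,|V|^{p-1}\Big(p\ln\ln(e+|V|)+\tfrac{1}{\ln(e+|V|)}\Big).
\]
Raising to the $n/2$ power, the factor $|V|^{(p-1)\cdot n/2}=|V|^{2^*}$ is integrable with $\int_\Omega|V|^{2^*}\,dx=O(1)$ by \eqref{subu1}. The $\ln\ln$ factor, estimated on the support of each bubble via Lemma \ref{zsj}(5)--(6) with the choice $\theta=(2i-1)/(n-2)$ dictated by \eqref{delta}, contributes a factor bounded by $C\ln\bigl|\ln\frac{\epsilon}{|\ln\epsilon|^2}\bigr|$, while the $1/\ln(e+|V|)$ term is harmless. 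This yields \eqref{fepli2}.

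For \eqref{fepli1} I would split
\[
f_\epsilon(V)-\sum_{i=1}^k(-1)^if_0(PU_{\mu_i,\xi_i})
=\bigl[f_\epsilon(V)-f_0(V)\bigr]
+\Bigl[f_0(V)-\sum_{i=1}^k(-1)^if_0(PU_{\mu_i,\xi_i})\Bigr].
\]
The first bracket is estimated by Lemma \ref{zsj}(1), giving $|f_\epsilon(V)-f_0(V)|\le\epsilon|V|^{p}\ln\ln(e+|V|)$, and its $L^{2n/(n+2)}$ norm is controlled as in the previous step, producing the term $O\bigl(\epsilon\ln|\ln\tfrac{\epsilon}{|\ln\epsilon|^2}|\bigr)$ that governs the dimensions $3\le n\le 6$. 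For the second bracket I would decompose $\Omega$ into the annular regions $\mathcal A_i$ introduced in \eqref{annulus}, on each of which a single bubble $PU_{\mu_i,\xi_i}$ dominates. In $\mathcal A_i$ one writes the remainder as $f_0(V)-(-1)^if_0(PU_{\mu_i,\xi_i})-\sum_{j\neq i}(-1)^jf_0(PU_{\mu_j,\xi_j})$ and uses the elementary inequality
\[
\bigl||a+b|^{p-1}(a+b)-|a|^{p-1}a\bigr|\le
\begin{cases} C\,(|a|^{p-1}|b|+|b|^{p}) & p\ge 2, \\ C\,|b|^{p} & p<2,\end{cases}
\]
with $a=(-1)^iPU_{\mu_i,\xi_i}$ and $b=\sum_{j\neq i}(-1)^jPU_{\mu_j,\xi_j}$. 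On $\mathcal A_i$ one has $PU_{\mu_j,\xi_j}=O\bigl((\mu_j/\mu_i)^{(n-2)/2}\bigr)\alpha_n$ for $j>i$ and $PU_{\mu_j,\xi_j}=O\bigl((\mu_i/\mu_j)^{(n-2)/2}|y|^{-(n-2)}\bigr)$ for $j<i$. The ratio $\mu_{i+1}/\mu_i=(\epsilon/|\ln\epsilon|^2)^{2/(n-2)}\cdot d_{i+1}/d_i$ is exactly tuned so that, after the change of variables $x=\mu_iy+\xi_i$ and integrating against $U_{\mu_i,\xi_i}^{p-1}$ (or $U_{\mu_i,\xi_i}^{p-1}\cdot|y|^{-(n-2)}$), one picks up a power of $\epsilon/|\ln\epsilon|^2$. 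Combining the estimates over all $\mathcal A_i$ and adding the $p<2$ regime gives the dimensional split of \eqref{fepli1}.

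Estimate \eqref{sumbu2} follows by the same annular strategy applied to $f'_0$ rather than $f_0$. Here one only has to confront the sum-of-bubbles interaction (the logarithmic correction is absent), so on each $\mathcal A_i$ one uses
\[
\bigl||a+b|^{p-1}-|a|^{p-1}\bigr|\le
\begin{cases} C\,|a|^{p-2}|b| & p-1\ge 1, \\ C\,|b|^{p-1} & p-1<1,\end{cases}
\]
and the same cross-integrals as above control $|f'_0(V)-\sum(-1)^if'_0(PU_{\mu_i,\xi_i})|_{n/2}$; the different exponents in $n=3,4,5$, $n=6$, $n\ge 7$ come entirely from which of the two branches above applies and whether the integral of $|b|^{p-1}$ against the dominant bubble is finite, logarithmic, or forces a weaker rate. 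The main technical obstacle is organizing the bookkeeping of the cross-terms in \eqref{fepli1}--\eqref{sumbu2} on each annulus so that the $\ln\ln$ factor from Lemma \ref{zsj}(5)--(6) is accounted for only once, and that the borderline dimensions $n=6$ (where $p=2$) and $n=5$ (where $(p-1)n/2=n/(n-2)$ is the integrability threshold from \eqref{subu1}) produce the correct logarithmic aggravation.
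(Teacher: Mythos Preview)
Your proposal is correct and follows essentially the same route as the paper: the paper likewise splits \eqref{fepli1} into $f_\epsilon(V)-f_0(V)$ (handled via Lemma \ref{zsj}(1),(5),(6) on the annuli $\mathcal A_l$) and $f_0(V)-\sum_i(-1)^if_0(PU_{\mu_i,\xi_i})$, treats both this bracket and \eqref{sumbu2} by the annular decomposition $\Omega=(\Omega\setminus B(\xi,\rho))\cup\bigcup_l\mathcal A_l$ together with the mean value theorem (which is exactly your elementary inequality), and then declares \eqref{fepli2} to follow analogously. The only differences are the order of presentation (the paper does \eqref{sumbu2} first and \eqref{fepli2} last) and that the paper computes the cross-integrals $\int_{\mathcal A_l}|U_{\mu_l,\xi_l}^{p-2}U_{\mu_i,\xi_i}|^{n/2}dx$ explicitly by change of variables rather than invoking a pointwise bound.
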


\begin{proof}
Let us estimate (\ref{sumbu2}).
One has
\begin{align*}
 & \int_\Omega\Big|  f^{'}_0(V)-\sum\limits_{i=1}^k (-1)^if^{'}_0(PU_{ \mu_i,\xi_i}) \Big|^{\frac{n}{2}}dx\\
  = & \int_{\Omega\setminus B(\xi,\rho)}\Big|V^{p-1}
  - \sum\limits_{i=1}^k(-1)^i(PU_{ \mu_i,\xi_i})^{p-1} \Big|^{\frac{n}{2}}dx
  +\sum\limits_{l=1}^k\int_{\mathcal{A}_l}\Big|V^{p-1}
  - \sum\limits_{i=1}^k(-1)^i(PU_{ \mu_i,\xi_i})^{p-1} \Big|^{\frac{n}{2}}dx.
\end{align*}
We estimate the first term
\begin{align*}
  &  \int_{\Omega\setminus B(\xi,\rho)}\Big|V^{p-1}
  - \sum\limits_{i=1}^k(-1)^i(PU_{ \mu_i,\xi_i})^{p-1} \Big|^{\frac{n}{2}}dx\\
 \leq  & \sum\limits_{i=1}^k\int_{\Omega\setminus B(\xi,\rho)}
 U_{ \mu_i,\xi_i}^{(p-1)\frac{n}{2}}dx
  \leq C   \sum\limits_{i=1}^k \mu_i^n
  =O \bigg(\Big(\frac{\epsilon}{|\ln\epsilon|^2}\Big)^{\frac{n}{n-2}}\bigg).
\end{align*}
For any $l$, by the mean value theorem,  there exists
$t = t(x)\in[0, 1] $ such that
\begin{align*}
 & \int_{\mathcal{A}_l}\Big|V^{p-1}
  - \sum\limits_{i=1}^k(-1)^i(PU_{ \mu_i,\xi_i})^{p-1} \Big|^{\frac{n}{2}}dx\\
  = & \int_{\mathcal{A}_l}\Big|\Big((-1)^l PU_{ \mu_l,\xi_l}+\sum\limits_{i\neq l}^k (-1)^iPU_{ \mu_i,\xi_i}\Big)^{p-1}
  - (-1)^l(PU_{ \mu_l,\xi_l})^{p-1}-\sum\limits_{i\neq l}^k(-1)^i(PU_{ \mu_i,\xi_i})^{p-1} \Big|^{\frac{n}{2}}dx  \\
 \leq &  C\int_{\mathcal{A}_l}\Big|\Big((-1)^l PU_{ \mu_l,\xi_l}+t\sum\limits_{i\neq l}^k (-1)^iPU_{ \mu_i,\xi_i}\Big)^{p-2}\sum\limits_{i\neq l}^k (-1)^iPU_{ \mu_i,\xi_i}\Big|^{\frac{n}{2}}dx
 +C\sum\limits_{i\neq l}^k\int_{\mathcal{A}_l} |PU_{ \mu_i,\xi_i}|^{(p-1)\frac{n}{2} } dx  \\
 \leq &  C\int_{\mathcal{A}_l}\Big|(-1)^{l+i}( PU_{ \mu_l,\xi_l})^{p-2}\sum\limits_{i\neq l}^k PU_{ \mu_i,\xi_i}\Big|^{\frac{n}{2}}dx
 +C\sum\limits_{i\neq l}^k\int_{\mathcal{A}_l} |PU_{ \mu_i,\xi_i}|^{(p-1)\frac{n}{2} } dx  \\
 \leq &  C\sum\limits_{i\neq l}^k\int_{\mathcal{A}_l}\Big|U_{ \mu_l,\xi_l}^{p-2} U_{ \mu_i,\xi_i}\Big|^{\frac{n}{2}}dx
 +C\sum\limits_{i\neq l}^k\int_{\mathcal{A}_l} |U_{ \mu_i,\xi_i}|^{(p-1)\frac{n}{2} } dx.
\end{align*}
If $i\neq l$, by   (\ref{radia}), let $x-\xi=\mu_i y$, then
\begin{align*}
   \int_{\mathcal{A}_l} |U_{ \mu_i,\xi_i}|^{(p-1)\frac{n}{2} } dx
  \leq & C \int_{\mathcal{A}_l}
\bigg( \frac{ \mu_i^{\frac{n-2}{2}}}{( \mu_i^2 +|x-\xi_i|^2)^{\frac{n-2}{2}}}\bigg)^{(p-1)\frac{n}{2} }dx \\
    =  &  C\mu_i^{n-\frac{n-2}{2}(p-1)\frac{n}{2}} \int_{\frac{\mathcal{A}_l}{\mu_i}}
 \frac{1}{( 1+|y-\sigma_i|^2)^n}dy
 =O\bigg(\Big(\frac{\epsilon}{|\ln\epsilon|^2}\Big)
 ^{\frac{n}{n-2}}\bigg).
\end{align*}
If $n>6$, by  (\ref{radia}),   one has
\begin{eqnarray*}
 && \int_{\mathcal{A}_l}\Big|U_{ \mu_l,\xi_l}^{p-2} U_{ \mu_i,\xi_i}\Big|^{\frac{n}{2}}dx\\
  & \leq & C \int_{\mathcal{A}_l}
\bigg( \frac{ \mu_l^{\frac{-n+6}{2}}}{( \mu_l^2 +|x-\xi_l|^2)^{\frac{-n+6}{2}}}\bigg)^{\frac{ n}{ 2}}
\bigg( \frac{ \mu_i^{\frac{n-2}{2}}}{( \mu_i^2 +|x-\xi_i|^2)^{\frac{n-2}{2}}}\bigg)^{\frac{ n}{ 2}}
dx \\
  &  =  &  C\mu_i^{n-\frac{n-2}{2}\frac{ n}{ 2}} \mu_l^{\frac{-n+6}{2}\frac{ n}{ 2}} \int_{\frac{\mathcal{A}_l}{\mu_i}}
 \frac{1}{( \mu_l^2+|\mu_iy-\mu_l\sigma_l|^2)^{\frac{-n+6}{2}\frac{ n}{ 2}}}\frac{1}{( 1+|y-\sigma_i|^2)^{\frac{n-2}{2}\frac{ n}{ 2}}}dy\\
 &  = &
\left\{ \arraycolsep=1.5pt
   \begin{array}{lll}
 O( \mu_i^{n-\frac{n-2}{2}\frac{ n}{ 2}+(n-6)\frac{ n}{ 2}} \mu_l^{\frac{-n+6}{2}\frac{ n}{ 2}}   )\int_{\frac{\mathcal{A}_l}{\mu_i}}
 \frac{1}{ | y- \frac{\mu_l}{\mu_i}\sigma_l|^{\frac{n(-n+6)}{2}}}
 \frac{1}{( 1+|y-\sigma_i|^2)^{\frac{n-2}{2}\frac{ n}{ 2}}}dy\ \   &{\rm if }\   l>i, \\[2mm]
 O( \mu_i^{n-\frac{n-2}{2}\frac{ n}{ 2}} \mu_l^{-\frac{-n+6}{2}\frac{ n}{ 2}} )\int_{\frac{\mathcal{A}_l}{\mu_i}}
 \frac{1}{( 1+|y-\sigma_i|^2)^{\frac{n-2}{2}\frac{ n}{ 2}}}dy\ \  & {\rm if}\   l<i,
\end{array}
\right.\\
&  = &
\left\{ \arraycolsep=1.5pt
   \begin{array}{lll}
 O( \mu_i^{n-\frac{n-2}{2}\frac{n}{2} } (\frac{\mu_l}{\mu_i})^{\frac{-n+6}{2}\frac{n}{2}} (\frac{\epsilon}{|\ln\epsilon|^2})^{\frac{n}{n-2} } \Big)   \ \   &{\rm if }\   l>i, \\[2mm]
 O\Big( (\frac{\mu_i}{\mu_l})^{\frac{-n+6}{2}\frac{n}{2}}
  (\frac{\epsilon}{|\ln\epsilon|^2})^{\frac{n}{n-2} }\Big)  \ \  & {\rm if}\   l<i,
\end{array}
\right.\\
&  = &
 O\bigg(\Big(\frac{\epsilon}{|\ln\epsilon|^2}\Big)^{\frac{  -n+6 }{n-2}\frac{ n}{2}+\frac{ n}{n-2} }\bigg).
\end{eqnarray*}
If $n<6$, it holds
\begin{eqnarray*}
 && \int_{\mathcal{A}_l}\Big|U_{ \mu_l,\xi_l}^{p-2} U_{ \mu_i,\xi_i}\Big|^{\frac{n}{2}}dx\\
  & \leq & C \int_{\mathcal{A}_l}
\bigg( \frac{ \mu_l^{\frac{-n+6}{2}}}{( \mu_l^2 +|x-\xi_l|^2)^{\frac{-n+6}{2}}}\bigg)^{\frac{ n}{ 2}}
\bigg( \frac{ \mu_i^{\frac{n-2}{2}}}{( \mu_i^2 +|x-\xi_i|^2)^{\frac{n-2}{2}}}\bigg)^{\frac{ n}{ 2}}
dx \\
  &  =  &  C\mu_l^{n-\frac{-n+6}{2}\frac{ n}{ 2}}
  \mu_i^{\frac{n-2}{2}\frac{ n}{ 2}} \int_{\frac{\mathcal{A}_l}{\mu_l}}
 \frac{1}{( 1+|y-\sigma_l|^2)^{\frac{-n+6}{2} \frac{ n}{ 2}}}
 \frac{1}{( \mu_i^2+|\mu_ly-\mu_i\sigma_i|^2)^{\frac{n-2}{2}\frac{ n}{ 2}}}dy\\
 &  = &
\left\{ \arraycolsep=1.5pt
   \begin{array}{lll}
 O(\mu_l^{n-\frac{-n+6}{2}\frac{ n}{ 2}}
  \mu_i^{-\frac{n-2}{2}\frac{ n}{ 2}} )\int_{\frac{\mathcal{A}_l}{\mu_l}}
 \frac{1}{( 1+|y-\sigma_i|^2)^{\frac{-n+6}{2} \frac{ n}{ 2}}}dy \ \   &{\rm if }\   l>i, \\[2mm]
 O( \mu_l^{n-\frac{-n+6}{2}\frac{ n}{ 2}- (n-2)\frac{ n }{ 2}}
  \mu_i^{\frac{n-2}{2}\frac{ n}{ 2}} )\int_{\frac{\mathcal{A}_l}{\mu_l}}
 \frac{1}{ | y- \frac{\mu_i}{\mu_l}\sigma_l|^{\frac{n(n-2)}{2}}}
 \frac{1}{( 1+|y-\sigma_l|^2)^{\frac{-n+6}{2} \frac{ n}{ 2}}}dy\ \  & {\rm if}\   l<i,
\end{array}
\right.\\
&  = &
\left\{ \arraycolsep=1.5pt
   \begin{array}{lll}
 O( (\frac{\mu_l}{\mu_i})^{\frac{n-2}{2} \frac{ n}{ 2}} )   \ \   &{\rm if }\   l>i, \\[2mm]
  O( (\frac{\mu_i}{\mu_l})^{\frac{n-2}{2} \frac{ n}{ 2}} )  \ \  & {\rm if}\   l<i,
\end{array}
\right.\\
&  = &
 O\bigg( \Big (\frac{\epsilon}{|\ln\epsilon|^2}\Big)^{\frac{n}{2} }\bigg).
\end{eqnarray*}
A similar estimate can be obtained for $n=6$, we prove that
\[
\int_{\mathcal{A}_l}\Big|U_{ \mu_l,\xi_l}^{p-2} U_{ \mu_i,\xi_i}\Big|^3dx=
O\bigg( \Big (\frac{\epsilon}{|\ln\epsilon|^2}\Big)^3
\Big|\ln\frac{\epsilon}{|\ln\epsilon|^2}\Big|^3\bigg).
\]
Thus,  (\ref{sumbu2}) holds.

Let us now estimate (\ref{fepli1}), one has
\begin{align}\label{uoa}
&\Big|f_\epsilon(V)-\sum_{i=1}^k(-1)^if_0(PU_{ \mu_i,\xi_i}) \Big|_{\frac{2n}{n+2}}\nonumber\\
 = & \Big|f_\epsilon(V)-f_0(V)\Big|_{\frac{2n}{n+2}}
 +
\Big|f_0(V)-
\sum_{i=1}^k(-1)^if_0(PU_{ \mu_i,\xi_i}) \Big|_{\frac{2n}{n+2}}.
\end{align}
Similar to the proof of (\ref{sumbu2}),
we obtain
\begin{eqnarray}\label{cfla}
 \Big|  f_0(V)-\sum\limits_{i=1}^k(-1)^if_0(PU_{ \mu_i,\xi_i}) \Big|_{\frac{2n}{n+2}}
   =
\left\{ \arraycolsep=1.5pt
   \begin{array}{lll}
  O\Big(\frac{\epsilon}{|\ln\epsilon|^2}\Big)\ \   &{\rm if }\   3\leq n\leq 5 , \\[2mm]
 O\bigg(\frac{\epsilon}{|\ln\epsilon|^2}
 \Big|\ln\frac{\epsilon}{|\ln\epsilon|^2}\Big|\bigg)\ \   &{\rm if }\  n=6, \\[2mm]
 O\bigg( \Big ( \frac{\epsilon}{|\ln\epsilon|^2}\Big)^{\frac{n+2}{2(n-2)} }\bigg)\ \  & {\rm if}\   n\geq 7.
\end{array}
\right.
\end{eqnarray}
On the other hand, by  Lemma \ref{zsj}, there holds
\begin{align}\label{onqan}
  \int_\Omega \Big|f_\epsilon(V)-f_0(V)\Big|^{\frac{2n}{n+2}}dx
  \leq  &  \epsilon \int_\Omega |V^p\ln\ln(e+V)|^{\frac{2n}{n+2}}dx \nonumber\\
  \leq  & \epsilon \int_\Omega\bigg|\Big(\sum\limits_{i=1}^k (-1)^iU_{ \mu_i,\xi_i}\Big)^p
    \ln\ln \Big(e+ \sum\limits_{i=1}^k (-1)^iU_{ \mu_i,\xi_i} \Big)\bigg|^{\frac{2n}{n+2}}dx \nonumber\\
  \leq  &
  \epsilon \int_{\Omega\setminus B(\xi, \rho)} \bigg| \Big(\sum\limits_{i=1}^k (-1)^iU_{ \mu_i,\xi_i}\Big)^p
  \ln\ln \Big(e+\sum\limits_{i=1}^k (-1)^iU_{ \mu_i,\xi_i}\Big)\bigg|^{\frac{2n}{n+2}}dx \nonumber\\
  & + \epsilon \sum_{l=1}^k\int_{\mathcal{A}_l} \bigg| \Big(\sum\limits_{i=1}^k (-1)^iU_{ \mu_i,\xi_i}\Big)^p
  \ln\ln \Big(e+\sum\limits_{i=1}^k (-1)^iU_{ \mu_i,\xi_i}\Big)\bigg|^{\frac{2n}{n+2}}dx.
\end{align}
We now observe that
\begin{align*}
 &  \int_{\Omega\setminus B(\xi, \rho)} \bigg| \Big(\sum\limits_{i=1}^k (-1)^iU_{ \mu_i,\xi_i}\Big)^p
  \ln\ln \Big(e+\sum\limits_{i=1}^k (-1)^iU_{ \mu_i,\xi_i}\Big)\bigg|^{\frac{2n}{n+2}}dx\\
  \leq   &  C \sum\limits_{i=1}^k \int_{\Omega\setminus B(\xi, \rho)}  \bigg| U_{ \mu_i,\xi_i}^p
  \ln\ln \Big(e+\sum\limits_{i=1}^k (-1)^iU_{ \mu_i,\xi_i}\Big)\bigg|^{\frac{2n}{n+2}}dx\\
  \leq &  C\sum\limits_{i=1}^k\mu_i^n
    \bigg|
  \ln\ln \Big(e+\sum\limits_{i=1}^k (-1)^i\mu_i^{\frac{n-2}{2}} \Big)\bigg|^{\frac{2n}{n+2}}dy\\
  \leq &  C\Big(\frac{\epsilon}{|\ln\epsilon|^2}\Big)^{\frac{n}{n-2}}
  \Big(\ln\Big|\ln\frac{\epsilon}{|\ln\epsilon|^2}\Big| \Big)^{\frac{2n}{n+2}}.
\end{align*}
For the second integral in (\ref{onqan}),
from (\ref{annulus}),
and let $x-\xi=\mu_l y$,
then
\begin{align}\label{ksu}
& \int_{\mathcal{A}_l} \bigg|\Big (\sum\limits_{i=1}^k (-1)^iU_{ \mu_i,\xi_i}\Big)^p
  \ln\ln \Big(e+\sum\limits_{i=1}^k (-1)^iU_{ \mu_i,\xi_i}\Big)\bigg|^{\frac{2n}{n+2}}dx\nonumber\\
 = & \int_{\mathcal{A}_l} \bigg| \Big((-1)^l U_{ \mu_l,\xi_l}+\sum_{i\neq l}^k(-1)^iU_{ \mu_i,\xi_i}\Big)^p
  \ln\ln \Big(e+(-1)^lU_{ \mu_l,\xi_l}+\sum_{i\neq l}^k(-1)^iU_{ \mu_i,\xi_i}\Big)\bigg|^{\frac{2n}{n+2}}dx\nonumber\\
 = &  \int_{\mathcal{A}_l} \bigg|\bigg[(-1)^lU_{ \mu_l,\xi_l}^p+ O\Big(\sum_{i\neq l}^k(-1)^iU_{ \mu_i,\xi_i}\Big)    \bigg]\ln\ln \Big(e+(-1)^lU_{ \mu_l,\xi_l}+\sum_{i\neq l}^k(-1)^iU_{ \mu_i,\xi_i}\Big)\bigg|^{\frac{2n}{n+2}}dx\nonumber\\
 = &  \int_{\mathcal{A}_l} \bigg|(-1)^lU_{ \mu_l,\xi_l}^p\ln\ln \Big(e+(-1)^lU_{ \mu_l,\xi_l}+\sum_{i\neq l}^k(-1)^iU_{ \mu_i,\xi_i}\Big)\bigg|^{\frac{2n}{n+2}}dx\nonumber\\
 & + C\sum_{i\neq l}^k\int_{\mathcal{A}_l}U_{ \mu_i,\xi_i} \bigg|\ln\ln \Big(e+(-1)^lU_{ \mu_l,\xi_l}+\sum_{i\neq l}^k(-1)^iU_{ \mu_i,\xi_i}\Big)\bigg|^{\frac{2n}{n+2}}dx.
\end{align}
For $i>l$,  by Lemma \ref{zsj}, we have
\begin{align*}
&   \int_{\mathcal{A}_l} \bigg|U_{ \mu_l,\xi_l}^p\ln\ln \Big(e+(-1)^lU_{ \mu_l,\xi_l}+\sum_{i\neq l}^k(-1)^iU_{ \mu_i,\xi_i}\Big)\bigg|^{\frac{2n}{n+2}}dx\\
= & \int_{\mathcal{A}_l}
\frac{\alpha_n^{\frac{2n}{n-2}}   \mu_l^n}{( \mu_l^2 +|x-\xi_l|^2)^n}
  \bigg| \ln\ln \Big(e+(-1)^l\frac{\alpha_n  \mu_l^{\frac{n-2}{2}}}{( \mu_l^2 +|x-\xi_l|^2)^{\frac{n-2}{2}}}
  +\sum_{i\neq l}^k(-1)^i \frac{\alpha_n\mu_i^{\frac{n-2}{2}}}{( \mu_i^2 +|x-\xi_i|^2)^{\frac{n-2}{2}}}  \Big)\bigg|^{\frac{2n}{n+2}}dx\\
 = & \int_{\frac{\mathcal{A}_l}{\mu_l}}
 \frac{\alpha_n^{\frac{2n}{n-2}}  }{( 1+|y-\sigma_l|^2)^n}
  \bigg| \ln\ln \Big(e+(-1)^l\mu_l^{-\frac{n-2}{2}}\frac{\alpha_n} {( 1+|y-\sigma_l|^2)^{\frac{n-2}{2}}}
  +o(\epsilon)\Big)\bigg|^{\frac{2n}{n+2}} dy\\
 = & \int_{\frac{\mathcal{A}_l}{\mu_l}}
 \frac{\alpha_n^{\frac{2n}{n-2}}  }{( 1+|y-\sigma_l|^2)^n}
  \bigg| \ln\ln \mu_l^{-\frac{n-2}{2}} +   \ln \Big[1+\frac{\ln\Big(e^{1-\frac{n-2}{2}|\ln \mu_l|  }+ \frac{\alpha_n} {1+|y-\sigma_l|^2)^{\frac{n-2}{2}}}\Big)}{\frac{n-2}{2}|\ln \mu_l|  }
\Big]\bigg|^{\frac{2n}{n+2}}dy +o(\epsilon)\\
= & \int_{\frac{\mathcal{A}_l}{\mu_l}}
 \frac{\alpha_n^{\frac{2n}{n-2}}  }{( 1+|y-\sigma_l|^2)^n}
  \bigg| \ln\ln \mu_l^{-\frac{n-2}{2}} +  \frac{1}{ |\ln \mu_l|}\frac{2}{n-2 } \ln \frac{\alpha_n}{( 1+|y-\sigma_l|^2)^{\frac{n-2}{2}}}
\bigg|^{\frac{2n}{n+2}}dy +o(\epsilon)\\
\leq & C (\ln|\ln\mu_1| )^{\frac{2n}{n+2}}
 =O \bigg(\Big(\ln\Big|\ln\frac{\epsilon}{|\ln\epsilon|^2}\Big| \Big)^{\frac{2n}{n+2}}\bigg).
\end{align*}
In a same way, the estimate of second term in (\ref{ksu}) is
\begin{align*}
& \sum_{i\neq l}^k\int_{\mathcal{A}_l}U_{ \mu_i,\xi_i} \bigg|\ln\ln \Big(e+(-1)^lU_{ \mu_l,\xi_l}+\sum_{i\neq l}^k(-1)^iU_{ \mu_i,\xi_i}\Big)\bigg|^{\frac{2n}{n+2}}dx
=O\Big(\frac{\epsilon}{|\ln\epsilon|^2}\Big).
\end{align*}
Thus,
\begin{equation}\label{onran}
\Big|f_\epsilon(V)-f_0(V)\Big|_{\frac{2n}{n+2}}
=O \bigg(\epsilon\ln\Big|\ln\frac{\epsilon}{|\ln\epsilon|^2}\Big|\bigg).
\end{equation}
Combining (\ref{uoa}), (\ref{cfla}) and (\ref{onran}), we obtain  (\ref{fepli1}).
Similar to the proof of (\ref{onqan}), the estimate  (\ref{fepli2}) holds.
\end{proof}

\begin{lemma}\label{pf1}
There holds
\begin{eqnarray*}
P_3  & =  &
  \int_\Omega \Big[\sum\limits_{i=1}^k(-1)^if_0(PU_{ \mu_i,\xi_i})-f_\epsilon(V) \Big] \psi^h_{ \mu_j,\xi_j}dx\\
& =& \left\{ \arraycolsep=1.5pt
   \begin{array}{lll}
  \alpha_n a_1  \frac{\epsilon}{|\ln\epsilon|^2}
  H(\xi,\xi)d_1^{n-2}
  +a_3\frac{\epsilon}{|\ln\epsilon|^2}\sum\limits_{i=1}^{k-1}\Big(\frac{d_{i+1}}{ d_i}\Big)^{\frac{n-2}{2}} g(\sigma_i)
  -\frac{2k^2}{(n-2)^2} a_4\epsilon\Big|\ln \frac{\epsilon}{|\ln\epsilon|^2}\Big|\\
\quad   -a_4 \frac{\epsilon}{|\ln\epsilon|^2}\sum\limits_{i=1}^k \frac{2}{2i-1}|\ln  d_i|
+o\Big(\frac{\epsilon}{|\ln\epsilon|^2}\Big)\ \   \   {\rm if}\  h=0, \\[2mm]
   \frac{1}{2}\alpha_n a_2  \Big(\frac{\epsilon}{|\ln\epsilon|^2}\Big)^{\frac{n-1}{n-2}} d_1^{n-1} \partial_{\xi_h}\varphi(\xi)
   + o\bigg(\Big(\frac{\epsilon}{|\ln\epsilon|^2}\Big)^{\frac{n-1}{n-2}}\bigg)\ \  \    {\rm if}\  h=1,\cdots,n,\\[2mm]
\end{array}
\right.
\end{eqnarray*}
where $a_1$-$a_4$ and $g(\sigma_i)$ are given  in  Proposition \ref{leftside} for $i=1,\cdots,k$.
\end{lemma}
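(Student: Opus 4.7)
The plan is to split $P_3$ into two conceptually distinct pieces,
$$
P_3=\underbrace{\int_\Omega\Big(f_0(V)-f_\epsilon(V)\Big)\psi^h_{\mu_j,\xi_j}\,dx}_{=:P_3^{(a)}}\;+\;\underbrace{\int_\Omega\Big(\sum_{i=1}^k(-1)^if_0(PU_{\mu_i,\xi_i})-f_0(V)\Big)\psi^h_{\mu_j,\xi_j}\,dx}_{=:P_3^{(b)}},
$$
and to evaluate each piece on the decomposition $\Omega=(\Omega\setminus B(\xi,\rho))\cup\bigcup_{l=1}^k\mathcal A_l$ of (\ref{annulus}). The integral on $\Omega\setminus B(\xi,\rho)$ is absorbed in the $o$-remainders since the bubbles are uniformly small there, and on each annulus $\mathcal A_l$ the dominant bubble is $U_{\mu_l,\xi_l}$, which dictates the change of variable $x-\xi_l=\mu_l y$.

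For $P_3^{(b)}$, Taylor-expand $f_0(V)=f_0\big((-1)^l PU_{\mu_l,\xi_l}+\sum_{i\neq l}(-1)^i PU_{\mu_i,\xi_i}\big)$ on $\mathcal A_l$ to first order around $(-1)^l PU_{\mu_l,\xi_l}$; the zeroth-order term cancels $(-1)^l f_0(PU_{\mu_l,\xi_l})$, and the linear-order term produces a cross contribution of the form $p\,U_{\mu_l,\xi_l}^{p-1}\sum_{i\neq l}(-1)^i PU_{\mu_i,\xi_i}$. After pairing with $\psi^h_{\mu_j,\xi_j}$ and using (\ref{eq1})--(\ref{eq3}) to replace $PU_{\mu_i,\xi_i}$ by $U_{\mu_i,\xi_i}-\alpha_n\mu_i^{(n-2)/2}H(x,\xi_i)+\cdots$, two types of leading contributions survive: (i) the Robin-function term involving $H(\xi,\xi)$ (respectively $\partial_{\xi_h}\varphi(\xi)$ when $h\geq1$), obtained exactly as in the treatment of $P_1$ and producing the $\alpha_n a_1 H(\xi,\xi)d_1^{n-2}\frac{\epsilon}{|\ln\epsilon|^2}$ contribution for $h=0$ and the $\frac12\alpha_n a_2\,d_1^{n-1}\partial_{\xi_h}\varphi(\xi)(\frac{\epsilon}{|\ln\epsilon|^2})^{(n-1)/(n-2)}$ contribution for $h\geq1$; and (ii) the nearest-neighbour bubble interactions $\int U_{\mu_l,\xi_l}^{p-1}U_{\mu_{l\pm1},\xi_{l\pm1}}\psi^0_{\mu_l,\xi_l}\,dx$, which after rescaling collapse to $(\mu_{l+1}/\mu_l)^{(n-2)/2}g(\sigma_l)=\frac{\epsilon}{|\ln\epsilon|^2}(d_{l+1}/d_l)^{(n-2)/2}g(\sigma_l)$, multiplied by the constant $a_3$ coming from $\int\psi^0$-factor integration. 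For $h=1,\dots,n$ the interaction integrals vanish by odd symmetry, leaving only the Robin-gradient term.

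For $P_3^{(a)}$, factor $f_\epsilon(V)-f_0(V)=f_0(V)\big(e^{-\epsilon\ln\ln(e+|V|)}-1\big)=-\epsilon f_0(V)\ln\ln(e+|V|)+O(\epsilon^2(\cdots))$ and, on each $\mathcal A_l$ with $x-\xi_l=\mu_l y$, invoke Lemma \ref{zsj}(5)--(6) to expand
$$
\ln\ln\bigl(e+\mu_l^{-(n-2)/2}U(y-\sigma_l)+o(1)\bigr)=\ln\ln\mu_l^{-(n-2)/2}+\frac{2}{(n-2)|\ln\mu_l|}\ln\!\frac{\alpha_n}{(1+|y-\sigma_l|^2)^{(n-2)/2}}+o(1).
$$
Substituting $|\ln\mu_l|=\frac{2l-1}{n-2}|\ln(\epsilon/|\ln\epsilon|^2)|+|\ln d_l|+o(1)$ and $\ln\ln\mu_l^{-(n-2)/2}=\ln|\ln(\epsilon/|\ln\epsilon|^2)|+\ln\tfrac{2l-1}{2}+o(1)$, one obtains after pairing with $\psi^0_{\mu_l,\xi_l}$ two kinds of leading contributions that, summed over $l=1,\dots,k$, yield the $-\tfrac{2k^2}{(n-2)^2}a_4\epsilon|\ln(\epsilon/|\ln\epsilon|^2)|$ term and the $-a_4\tfrac{\epsilon}{|\ln\epsilon|^2}\sum_l\tfrac{2}{2l-1}|\ln d_l|$ term; for $h\geq 1$ this piece is $o((\epsilon/|\ln\epsilon|^2)^{(n-1)/(n-2)})$, again by the odd symmetry of $\psi^h$.

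The main obstacle will be the bookkeeping in $P_3^{(a)}$: tracking exactly how the sums $\sum_{l=1}^k\frac{1}{|\ln\mu_l|}$ and $\sum_{l=1}^k\ln\ln\mu_l^{-(n-2)/2}$ combine — after using $\mu_l=(\epsilon/|\ln\epsilon|^2)^{(2l-1)/(n-2)}d_l$ — to reproduce the precise coefficient $\tfrac{2k^2}{(n-2)^2}$ and the specific $|\ln d_l|/(2l-1)$ weights, while showing that all other error terms are swallowed by $o(\epsilon/|\ln\epsilon|^2)$. The rest of the work (Taylor expansion, change of variables, absorption of tails via Lemma \ref{deia}) is routine but must be carefully organized to match the coefficients $a_1,\dots,a_4$ announced in Proposition \ref{leftside}.
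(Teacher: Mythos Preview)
Your two-way split $P_3=P_3^{(a)}+P_3^{(b)}$ coincides with the paper's decomposition $P_3=J_1+J_2+J_3$, where $P_3^{(a)}=J_3$ and $P_3^{(b)}=J_1+J_2$; the treatment of $P_3^{(a)}$ via Lemma~\ref{zsj}(5)--(6) on annuli is exactly what the paper does for $J_3$, so that part is fine.

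The gap is in your mechanism for the Robin term inside $P_3^{(b)}$. When you Taylor-expand $f_0(V)$ on $\mathcal A_l$ around $(-1)^lPU_{\mu_l,\xi_l}$, the zeroth-order term cancels $(-1)^lf_0(PU_{\mu_l,\xi_l})$ \emph{exactly}, so the only surviving first-order piece is the cross term $p\,(PU_{\mu_l,\xi_l})^{p-1}\sum_{i\neq l}(-1)^iPU_{\mu_i,\xi_i}$. Replacing $PU_{\mu_i,\xi_i}$ by $U_{\mu_i,\xi_i}-\alpha_n\mu_i^{(n-2)/2}H(x,\xi_i)$ in that cross term, and then pairing with $\psi^0_{\mu_j,\xi_j}$, produces only \emph{off-diagonal} contributions of size $\mu_i^{(n-2)/2}\mu_j^{(n-2)/2}H(\xi_j,\xi_i)$ with $i\neq j$, never the diagonal term $\mu_1^{\,n-2}H(\xi,\xi)$ that the statement requires. (A quick sanity check: for $k=1$ one has $f_0(V)=(-1)f_0(PU_{\mu_1,\xi_1})$ identically, so $P_3^{(b)}\equiv0$ and no Robin term can possibly come from your cross-expansion.)

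In the paper the Robin term is obtained by the finer splitting $P_3^{(b)}=J_1+J_2$ with
\[
J_2=\sum_i(-1)^i\!\int_\Omega\!\big[f_0(PU_{\mu_i,\xi_i})-f_0(U_{\mu_i,\xi_i})\big]\psi^h_{\mu_j,\xi_j}\,dx,
\]
i.e.\ by isolating the self-correction $PU_{\mu_i,\xi_i}-U_{\mu_i,\xi_i}\approx-\alpha_n\mu_i^{(n-2)/2}H(x,\xi_i)$ on the \emph{same} bubble before passing to annuli; this is what yields $\alpha_n a_1\mu_j^{\,n-2}H(\xi_j,\xi_j)$ when $i=j$. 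The interaction piece $J_1$ then carries the $g(\sigma)$ terms (and, in fact, a compensating Robin contribution that combines with $J_2$). To repair your argument, either insert $\sum_i(-1)^if_0(U_{\mu_i,\xi_i})$ and split $P_3^{(b)}$ into the paper's $J_1$ and $J_2$, or expand on $\mathcal A_l$ around $(-1)^lU_{\mu_l,\xi_l}$ (not $PU_{\mu_l,\xi_l}$) and keep track of the self-correction $(-1)^l(PU_{\mu_l,\xi_l}-U_{\mu_l,\xi_l})$ separately from the genuine cross-bubbles. Everything else in your plan matches the paper.
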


\begin{proof}
We have
\begin{align*}
  P_3
  = & \int_\Omega \Big[\sum_{i=1}^k(-1)^if_0(PU_{ \mu_i,\xi_i})-f_\epsilon(V) \Big] \psi^h_{ \mu_j,\xi_j}dx\\
   = &  \int_\Omega \Big[\sum_{i=1}^k(-1)^if_0(U_{ \mu_i,\xi_i})-f_0(V) \Big] \psi^h_{ \mu_j,\xi_j}dx\\
 &  + \int_\Omega \Big[\sum_{i=1}^k(-1)^if_0(PU_{ \mu_i,\xi_i})-\sum_{i=1}^k (-1)^if_0(U_{ \mu_i,\xi_i}) \Big] \psi^h_{ \mu_j,\xi_j}dx\\
  & + \int_\Omega \Big[f_0(V)-f_\epsilon(V) \Big] \psi^h_{ \mu_j,\xi_j}dx
  = J_1+J_2+J_3.
\end{align*}

\emph{Estimate of $J_1$}:
One has
\begin{align}\label{iak5}
  J_1
  = & \int_\Omega \Big[  \sum_{i=1}^k(-1)^if_0(U_{ \mu_i,\xi_i})-f_0(V) \Big] \psi^h_{ \mu_j,\xi_j}dx\nonumber\\
  = & \sum_{i=1}^k \int_\Omega(-1)^i f_0(U_{ \mu_i,\xi_i})\psi^h_{ \mu_j,\xi_j}dx
  -\int_\Omega f_0(V)  \psi^h_{ \mu_j,\xi_j}dx\nonumber\\
  = & \sum_{i=1}^k (-1)^i\int_\Omega f_0(U_{ \mu_i,\xi_i})P\psi^h_{ \mu_j,\xi_j}dx
  + \sum_{i=1}^k (-1)^i \int_\Omega f_0(U_{ \mu_i,\xi_i})(\psi^h_{ \mu_j,\xi_j}-P\psi^h_{ \mu_j,\xi_j})dx \nonumber\\
 & -\int_\Omega f_0(V)  \psi^h_{ \mu_j,\xi_j}dx.
\end{align}

For the first term in (\ref{iak5}),  if  $h=0$ and $i\neq j$,
from Lemma \ref{expsi},
  we deduce
\begin{align}\label{iak1}
& \int_\Omega f_0(U_{ \mu_i,\xi_i})P\psi^0_{ \mu_j,\xi_j}dx\nonumber\\
= & \frac{n-2}{2}a_2\alpha_n\mu_j^{\frac{n-2}{2}}\int_\Omega \frac{\mu_i^{\frac{n+2}{2}}}{(\mu_i^2+|x-\xi_i|^2)^{\frac{n+2}{2}}}G(x,\xi_j)dx
  +o\Big(\frac{\epsilon}{|\ln\epsilon|^2}\Big)\nonumber\\\
= & \frac{n-2}{2}a_2\alpha_n\mu_i^{ \frac{n-2}{2}}\mu_j^{\frac{n-2}{2}}\int_{\frac{\Omega -\xi_i}{\mu_i}} \frac{1}{(1+|y|^2)^{\frac{n+2}{2}}}  G(\mu_i y+\xi_i,\xi_j)dy
  +o\Big(\frac{\epsilon}{|\ln\epsilon|^2}\Big) \nonumber\\
 = &\frac{n-2}{2}
  a_2^2\mu_j^{\frac{n-2}{2}}\mu_i^{ \frac{n-2}{2}} G(\xi_i,\xi_j)+
  o\Big(\frac{\epsilon}{|\ln\epsilon|^2}\Big).
\end{align}
Let $h=0$ and $i=j$, from (\ref{eq2}) and (\ref{xy}), there holds
 \begin{align}\label{iak1u}
  \int_\Omega f_0(U_{ \mu_i,\xi_i})P\psi^0_{ \mu_i,\xi_i}dx
    = & \int_\Omega U_{ \mu_i,\xi_i}^p\psi^0_{ \mu_i,\xi_i}dx
   -\frac{n-2}{2}\alpha_n \mu_i^{\frac{n-2}{2}}\int_\Omega U_{ \mu_i,\xi_i}^p\Big(H(x,\xi_i)+O( \mu_i^{\frac{n+4}{2}})\Big)dx\nonumber\\
     = & \mu_i^{n-\frac{n+2}{2}+1+\frac{n-2}{2}-(n-2)}
  \int_{\frac{\Omega-\xi_i}{\mu_i}} U^p(y)\Big(\frac{\partial U}{\partial \mu}\Big)_{|_{\mu=1}}dy\nonumber\\
  & -\frac{n-2}{2} \mu_i^{n-2} H(\xi_i,\xi_i)\int_{\frac{\Omega-\xi_i}{\mu_i}} U^p(y) dy+O( \mu_i^{\frac{3n}{2}})\nonumber\\
   = & -\frac{n-2}{2}  a_2\mu_i^{n-2}H(\xi_i,\xi_i)  +O\Big(\frac{\epsilon}{|\ln\epsilon|^2}\Big).
\end{align}
If $h=1,\cdots,n$ and $i\neq j$, from Lemma \ref{expsi}, we get
\begin{align}\label{iak2}
   \int_\Omega f_0(U_{ \mu_i,\xi_i})P\psi^h_{ \mu_j,\xi_j}dx
   = &  \int_\Omega  U_{ \mu_i,\xi_i}^{2^*-1}P\psi^h_{ \mu_j,\xi_j}dx\nonumber\\
  = & a_2\mu_j^{\frac{n}{2}}\mu_i^{ \frac{n-2}{2}}\int_{\frac{\Omega -\xi_i}{\mu_i}} \frac{1}{(1+|y|^2)^{\frac{n+2}{2}}}\frac{\partial G}{\partial \xi_j^h}(\mu_i y+\xi_i,\xi_j)dy
  +o\bigg(\Big(\frac{\epsilon}{|\ln\epsilon|^2}\Big)^{\frac{n-1}{n-2}}\bigg)\nonumber\\
  = &
  a_2^2\mu_j^{\frac{n}{2}}\mu_i^{ \frac{n-2}{2}}\frac{\partial G}{\partial \xi_j^h}(\xi_i,\xi_j)
  +o\bigg(\Big(\frac{\epsilon}{|\ln\epsilon|^2}\Big)^{\frac{n-1}{n-2}}\bigg).
\end{align}
If $h=1,\cdots,n$ and $i=j$, from (\ref{eq3}) and (\ref{xy}),  we have
\begin{align}\label{iak2d}
   \int_\Omega f_0(U_{ \mu_i,\xi_i})P\psi^h_{ \mu_i,\xi_i}dx
   = &  \int_\Omega  U_{ \mu_i,\xi_i}^{2^*-1}P\psi^h_{ \mu_i,\xi_i}dx\nonumber\\
   = &  \int_\Omega  U_{ \mu_i,\xi_i}^{2^*-1} \psi^h_{ \mu,\xi_i}(x)dx
    -\alpha_n \mu_i^{\frac{n }{2}}
   \int_\Omega  U_{ \mu_i,\xi_i}^{2^*-1}\partial_{\xi^h_i}H(x,\xi_i)dx
   +o\bigg(\Big(\frac{\epsilon}{|\ln\epsilon|^2}\Big)^{\frac{n-1}{n-2}}\bigg)\nonumber\\
   = & \mu_i^{n-\frac{n+2}{2}+1-\frac{n-2}{2}} \int_{\frac{\Omega-\xi_i}{\mu_i}}  U^{2^*-1}(y) \psi^h (y)dy\nonumber\\
  &  -\alpha_n \mu_i^{\frac{n }{2}-\frac{n+2 }{2}+n}
   \partial_{\xi^h_i}H(\xi_i,\xi_i)\int_{\frac{\Omega-\xi_i}{\mu_i}}  U^{2^*-1}(y)dy
   +o\bigg(\Big(\frac{\epsilon}{|\ln\epsilon|^2}\Big)^{\frac{n-1}{n-2}}\bigg)\nonumber\\
   = & \mu_i \int_{\frac{\Omega-\xi_i}{\mu_i}}  U^{2^*-1}(y) \psi^h (y)dy\nonumber\\
  &  -\alpha_n \mu_i^{ n-1}
   \partial_{\xi^h_i}H(\xi_i,\xi_i)\int_{\frac{\Omega-\xi_i}{\mu_i}}  U^{2^*-1}(y)dy
   +o\bigg(\Big(\frac{\epsilon}{|\ln\epsilon|^2}\Big)^{\frac{n-1}{n-2}}\bigg)\nonumber\\
  = &   -\alpha_n
  a_2 \mu_i^{n-1} \frac{\partial H}{\partial \xi_i^h}(\xi_i,\xi_i)
  +o\bigg(\Big(\frac{\epsilon}{|\ln\epsilon|^2}\Big)^{\frac{n-1}{n-2}}\bigg).
\end{align}

It remains to estimate the second term in (\ref{iak5}),  if
   $h=0$, by (\ref{eq2}), it holds
\begin{eqnarray}\label{iak3}
 && \int_\Omega f_0(U_{ \mu_i,\xi_i})(\psi^0_{ \mu_j,\xi_j}-P\psi^0_{ \mu_j,\xi_j})dx\nonumber\\
  & = &  \frac{n-2}{2}\alpha_n \mu_i^{\frac{n-2}{2}}\mu_j^{\frac{n-2}{2}}\int_{\frac{\Omega -\xi_i}{\mu_i}} \frac{1}{(1+|y|^2)^{\frac{n+2}{2}}}
  H(\mu_i y+\xi_i,\xi_j)dy+o( \mu_i^{\frac{n-2}{2}}\mu_j^{\frac{n+4}{2}})
  \nonumber\\
  & = & \left\{ \arraycolsep=1.5pt
   \begin{array}{lll}
  \frac{n-2}{2}a_2 \mu_i^{\frac{n-2}{2}}\mu_j^{\frac{n-2}{2}} H(\xi_i,\xi_j)
  + o\bigg(\Big(\frac{\epsilon}{|\ln\epsilon|^2}\Big)^{\frac{n-1}{n-2}}\bigg) \ \   &{\rm if}\ i>j, \\[2mm]
  \frac{n-2}{2}a_2 \mu_i^{n-2} H(\xi_i,\xi_i)
  +o\bigg(\Big(\frac{\epsilon}{|\ln\epsilon|^2}\Big)^{\frac{n-1}{n-2}}\bigg)\ \   &{\rm if}\ i=j.
\end{array}
\right.
\end{eqnarray}
If $h=1,\cdots,n$,  by (\ref{eq3}), we obtain
\begin{eqnarray}\label{iak4}
 &&  \int_\Omega f_0(U_{ \mu_i,\xi_i})(\psi^h_{ \mu_j,\xi_j}-P\psi^h_{ \mu_j,\xi_j})dx\nonumber\\
  &= & \alpha_n  \mu_i^{ \frac{n-2 }{2}}\mu_j^{\frac{n}{2}}\int_{\frac{\Omega -\xi_i}{\mu_i}} \frac{1}{(1+|y|^2)^{\frac{n+2}{2}}}\partial_{\xi_j^h}H(\mu_i y+\xi_i,\xi_j)dy
  +o(\mu_i^{\frac{n-2}{2}}\mu_j^{\frac{n+2}{2}}) \nonumber\\
  & = & \left\{ \arraycolsep=1.5pt
   \begin{array}{lll}
  a_2 \mu_i^{\frac{n-2}{2}}\mu_j^{ \frac{n}{2}} \partial_{\xi_j^h}H(\xi_i,\xi_j)
  + o\bigg(\Big(\frac{\epsilon}{|\ln\epsilon|^2}\Big)^{\frac{n-1}{n-2}}\bigg)  \ \   &{\rm if}\ i> j, \\[2mm]
  a_2 \mu_i^{n-1} \partial_{\xi_i^h}H(\xi_i,\xi_i)
  + o\bigg(\Big(\frac{\epsilon}{|\ln\epsilon|^2}\Big)^{\frac{n-1}{n-2}}\bigg) \ \   &{\rm if}\ i=j.
\end{array}
\right.
\end{eqnarray}

For the last integral in (\ref{iak5}),
\begin{align}\label{ladp}
  \int_\Omega f_0(V)  \psi^h_{ \mu_j,\xi_j}dx
   & =  \int_{\Omega\setminus B(\xi,\rho)} f_0(V)  \psi^h_{ \mu_j,\xi_j}dx
   +\sum_{l=1}^k\int_{\mathcal{A}_l} f_0(V)  \psi^h_{ \mu_j,\xi_j}dx.
\end{align}
If $h=0$, by (\ref{psi}),  a direct computation shows that
\begin{align*}
   \int_{\Omega\setminus B(\xi,\rho)}\Big| f_0(V)  \psi^0_{ \mu_j,\xi_j}\Big|dx
   \leq & C \frac{n-2}{2} \alpha_n\mu_j^{\frac{n-2}{2}} \sum_{i=1}^k\int_{\Omega\setminus B(\xi,\rho)}\bigg| U_{ \mu_i,\xi_i}^p \frac{|x-\xi_j|^2-\mu_j^2}{(\mu_j^2+|x-\xi_j|^2)^{\frac{n}{2}}}\bigg|dx\\
 \leq & C \frac{n-2}{2} \alpha_n\mu_j^{\frac{n-2}{2}} \sum_{i=1}^k\int_{\Omega\setminus B(\xi,\rho)}
 \bigg|\frac{ \mu_i^{\frac{n+2}{2}}}{(\mu_i^2+|x-\xi_i|^2)^{\frac{n+2}{2}}}
 \frac{|x-\xi_j|^2-\mu_j^2}{(\mu_j^2+|x-\xi_j|^2)^{\frac{n}{2}}}\bigg|dx\\
  = &
   o\bigg(\Big(\frac{\epsilon}{|\ln\epsilon|^2}\Big)^{\frac{n-1}{n-2}}\bigg).
\end{align*}
If $h=1,\cdots,n$, we obtain
\begin{align*}
   \int_{\Omega\setminus B(\xi,\rho)} \Big|f_0(V)  \psi^0_{ \mu_j,\xi_j}\Big|dx
  \leq & C  (n-2)  \alpha_n  \mu_j^{\frac{n}{2}}
 \sum_{i=1}^k\int_{\Omega\setminus B(\xi,\rho)} \bigg|(-1)^i  U_{ \mu_i,\xi_i}^p \frac{x^h-\xi_j^h}{(\mu_j^2+|x-\xi_j|^2)^{\frac{n}{2}}}\bigg|dx\\
 \leq & C \frac{n-2}{2} \alpha_n\mu_j^{\frac{n}{2}} \sum_{i=1}^k\int_{\Omega\setminus B(\xi,\rho)}
 \bigg|\frac{ \mu_i^{\frac{n+2}{2}}}{(\mu_i^2+|x-\xi_i|^2)^{\frac{n+2}{2}}}
 \frac{x^h-\xi_j^h}{(\mu_j^2+|x-\xi_j|^2)^{\frac{n}{2}}}\bigg|dx\\
  = & o\bigg(\Big(\frac{\epsilon}{|\ln\epsilon|^2}\Big)^{\frac{n-1}{n-2}}\bigg).
\end{align*}
On the other hand, we estimate
 the second term in (\ref{ladp}),
\begin{align}\label{ora}
& \int_{\mathcal{A}_l} \Big|f_0(V)  \psi^h_{ \mu_j,\xi_j}\Big|dx\nonumber\\
= & \int_{\mathcal{A}_l} \Big|\Big ((-1)^lPU_{ \mu_l,\xi_l}+\sum_{i\neq l}^k(-1)^iPU_{ \mu_i,\xi_i}\Big)^p  \psi^h_{ \mu_j,\xi_j}\Big|dx\nonumber\\
= & \int_{\mathcal{A}_l} \Big|(-1)^l (PU_{ \mu_l,\xi_l})^p  \psi^h_{ \mu_j,\xi_j}\Big|dx
+O\bigg(\int_{\mathcal{A}_l}\Big| \Big (\sum_{i\neq l}^k(-1)^iPU_{ \mu_i,\xi_i}\Big)  \psi^h_{ \mu_j,\xi_j}\Big|dx\bigg)\nonumber\\
= & \int_{\mathcal{A}_l} \Big|(-1)^l\Big( (PU_{ \mu_l,\xi_l})^p-U_{ \mu_l,\xi_l}^p\Big)  \psi^h_{ \mu_j,\xi_j}\Big|dx
+\int_{\mathcal{A}_l} |(-1)^lU_{ \mu_l,\xi_l}^p  \psi^h_{ \mu_j,\xi_j}|dx\nonumber\\
& +O\bigg(\int_{\mathcal{A}_l} \Big| \Big(\sum_{i\neq l}^k(-1)^iPU_{ \mu_i,\xi_i}\Big) \psi^h_{ \mu_j,\xi_j}\Big|dx\bigg).
\end{align}
On the  fixed  annulus $\mathcal{A}_l$,
by (\ref{eq1}), for $h=0$, let $x-\xi=\mu_ly$,
then
\begin{eqnarray*}
 && \int_{\mathcal{A}_l} \Big|\Big( (PU_{ \mu_l,\xi_l})^p-U_{ \mu_l,\xi_l}^p\Big)  \psi^0_{ \mu_j,\xi_j}\Big|dx\\
 & = & \int_{\mathcal{A}_l}\bigg|\bigg[ \Big(   U_{ \mu_l,\xi_l}-\alpha_n \mu_l^{\frac{n-2}{2}}H(x,\xi_l)+O( \mu_l^{\frac{n+2}{2}})\Big)^p-U_{ \mu_l,\xi_l}^p\bigg]  \psi^0_{ \mu_j,\xi_j}\bigg|dx\\
 & = & \frac{(n-2)\alpha_n}{2} \mu_l^{\frac{n-2}{2}+n}\mu_j^{\frac{n-2}{2}}
O\left(-\alpha_n H (\xi_l,\xi_l)+O(\mu_l^2)\right)
 \int_{\frac{\mathcal{A}_l}{\mu_l}}
\left|\frac{ \Big[|y- \sigma_j\frac{\mu_j}{\mu_l}|^2-(\frac{\mu_j}{\mu_l})^2\Big]\mu_l^2}{\Big((\frac{\mu_j}{\mu_l})^2+| y- \sigma_j \frac{\mu_j}{\mu_l}|^2\Big)^{\frac{n}{2}}\mu_l^n} \right|dy\\
 & = & \left\{ \arraycolsep=1.5pt
  \begin{array}{lll}
  O(\mu_j^{\frac{n+2}{2}}\mu_l^{\frac{n-2}{2}}H (\xi_l,\xi_l))
  \ \   &{\rm if}\ j>l, \\[2mm]
  O(\mu_l^nH (\xi_l,\xi_l))
  \ \   &{\rm if}\ j=l,
\end{array}
\right. \\
  & = &  o\bigg(\Big(\frac{\epsilon}{|\ln\epsilon|^2}\Big)^{\frac{n-1}{n-2}}\bigg).
\end{eqnarray*}
Similarly, for $h=1,\cdots,n$,  we get
\begin{eqnarray*}
 && \int_{\mathcal{A}_l} \Big|\Big( (PU_{ \mu_l,\xi_l})^p-U_{ \mu_l,\xi_l}^p\Big)  \psi^h_{ \mu_j,\xi_j}\Big|dx\\
 & = & \int_{\mathcal{A}_l}\bigg|\bigg[ \Big(   U_{ \mu_l,\xi_l}-\alpha_n \mu_l^{\frac{n-2}{2}}H(x,\xi_l)+O( \mu_l^{\frac{n+2}{2}})\Big)^p-U_{ \mu_l,\xi_l}^p\bigg]  \psi^h_{ \mu_j,\xi_j}\bigg|dx\\
 & = & -(n-2)  \alpha_n  \mu_j^{\frac{n}{2}}
 \int_{\mathcal{A}_l}
  \bigg|O\Big(-\alpha_n \mu_l^{\frac{n-2}{2}}H(x,\xi_l)+O( \mu_l^{\frac{n+2}{2}})\Big) \frac{(x -\xi_j)_h}{(\mu_j^2+|x-\xi_j|^2)^{\frac{n}{2}}}\bigg|dx\\
 & = & \left\{ \arraycolsep=1.5pt
  \begin{array}{lll}
  O\Big(\mu_j^{\frac{n }{2}}\mu_l^{\frac{n}{2}}H (\xi_l,\xi_l)\Big)
  \ \   &{\rm if}\ j>l, \\[2mm]
  O\Big(\mu_l^nH (\xi_l,\xi_l)\Big)
 \ \   &{\rm if}\ j=l,
\end{array}
\right.\\
 & = &    o\bigg(\Big(\frac{\epsilon}{|\ln\epsilon|^2}\Big)^{\frac{n-1}{n-2}}\bigg).
\end{eqnarray*}
The second term in (\ref{ora}).
By (\ref{eq1}), for $h=0$, if $ j>l$,
let $x-\xi=\mu_ly$,  then
\begin{align*}
 & \int_{\mathcal{A}_l} |U_{ \mu_l,\xi_l}^p  \psi^0_{ \mu_j,\xi_j} |dx\\
  = & \frac{n-2}{2}\alpha_n^{p+1}\mu_j^{\frac{n-2}{2}}
 \int_{\frac{\mathcal{A}_l}{\mu_l}}
 \bigg|\frac{\mu_l^{-\frac{n+2}{2}}}{(1+|y-\sigma_l|^2)^{\frac{n+2}{2}}}
  \frac{ \Big(|y- \sigma_j\frac{\mu_j}{\mu_l}|^2-(\frac{\mu_j}{\mu_l})^2\Big)\mu_l^2}{\Big((\frac{\mu_j}{\mu_l})^2+| y- \sigma_j \frac{\mu_j}{\mu_l}|^2\Big)^{\frac{n}{2}}\mu_l^n} \mu_l^n\bigg|dy\\
   = & \frac{n-2}{2}\alpha_n^{p+1} \Big(\frac{\mu_j}{ \mu_l}\Big)^{\frac{n-2}{2}}
  \bigg( \int_{\R^n} \bigg|\frac{  y^{2-n}}{(1+|y-\sigma_l|^2)^{\frac{n+2}{2}}}\bigg|dy
  +o\Big(\frac{\epsilon}{|\ln\epsilon|^2}\Big)\bigg)\\
  = & \frac{n-2}{2}\alpha_n^{p+1} \bigg(\frac{\mu_{l+1}}{ \mu_l}\bigg)^{\frac{n-2}{2}}
  \bigg( \int_{\R^n} \bigg|\frac{  y^{2-n}}{(1+|y-\sigma_l|^2)^{\frac{n+2}{2}}}\bigg|dy
  +  o\Big(\frac{\epsilon}{|\ln\epsilon|^2}\Big)
  \bigg).
\end{align*}
For $h=1,\cdots,n$,  if $ j>l$,  let $x-\xi=\mu_ly$,   there holds
\begin{align*}
\int_{\mathcal{A}_l}  |U_{ \mu_l,\xi_l}^p  \psi^h_{ \mu_j,\xi_j} |dx
  = &  (n-2)  \alpha_n^{p+1}  \mu_j^{\frac{n}{2}}
 \int_{\frac{\mathcal{A}_l}{\mu_l}}
 \bigg|\frac{\mu_l^{-\frac{n+2}{2}}}{(1+|y-\sigma_l|^2)^{\frac{n+2}{2}}}
 \frac{  (y_h- \sigma_j\frac{\mu_j}{\mu_l})\mu_l }{\Big((\frac{\mu_j}{\mu_l})^2+| y- \sigma_j \frac{\mu_j}{\mu_l}|^2\Big)^{\frac{n}{2}}\mu_l^n} \mu_l^n\bigg|dy\\
   = &
 o\bigg(\Big(\frac{\epsilon}{|\ln\epsilon|^2}\Big)^{\frac{n-1}{n-2}}\bigg).
\end{align*}
The  last term in (\ref{ora}).
For $h=0$ and $ j>i$,
\begin{align*}
& \int_{\mathcal{A}_l} \Big| \Big(\sum_{i\neq l}^k(-1)^iPU_{ \mu_i,\xi_i}\Big) \psi^0_{ \mu_j,\xi_j}\Big|dx\\
  \leq & \sum_{i\neq l}^k \int_{\mathcal{A}_l}\big| U_{ \mu_i,\xi_i} \psi^0_{ \mu_j,\xi_j}\big|dx\\
   = & \frac{n-2}{2}\alpha_n^2
\sum_{i\neq l}^k \mu_i^{-\frac{n-2}{2}}\mu_j^{\frac{n-2}{2}}
   \int_{\frac{\mathcal{A}_l}{\mu_i}}\bigg|\frac{  1}{(1+|y-\sigma_i|^2)^{\frac{n-2}{2}}}
    \frac{ \Big(|y- \sigma_j\frac{\mu_j}{\mu_i}|^2-(\frac{\mu_j}{\mu_i})^2\Big)
    \mu_i^2}{\Big((\frac{\mu_j}{\mu_i})^2+| y- \sigma_j \frac{\mu_j}{\mu_i}|^2\Big)^{\frac{n}{2}} } \bigg|dy\\
   = &o\bigg(\Big(\frac{\epsilon}{|\ln\epsilon|^2}\Big)^{\frac{n-1}{n-2}}\bigg).
\end{align*}
For $h=1,\cdots,n$ and  $ j>i$,
\begin{align*}
 & \int_{\mathcal{A}_l}  \Big|\Big(\sum_{i\neq l}^k(-1)^iPU_{ \mu_i,\xi_i}\Big) \psi^h_{ \mu_j,\xi_j}\Big|dx\\
  \leq & \sum_{i\neq l}^k \int_{\mathcal{A}_l} |U_{ \mu_i,\xi_i} \psi^h_{ \mu_j,\xi_j}|dx\\
  = & (n-2)  \alpha_n^2 \sum_{i\neq l}^k \mu_j^{\frac{n}{2}}
 \int_{\frac{\mathcal{A}_l}{\mu_l}}
 \bigg|\frac{\mu_i^{-\frac{n-2}{2}}}{(1+|y-\sigma_i|^2)^{\frac{n-2}{2}}}
 \frac{  (y^h- \sigma_j\frac{\mu_j}{\mu_l})\mu_i }{\Big((\frac{\mu_j}{\mu_i})^2+| y- \sigma_j \frac{\mu_j}{\mu_i}|^2\Big)^{\frac{n}{2}}\mu_i^n} \bigg|\mu_i^ndy\\
 =& o\bigg(\Big(\frac{\epsilon}{|\ln\epsilon|^2}\Big)^{\frac{n-1}{n-2}}\bigg).
\end{align*}
Therefore,  we have
\begin{eqnarray}\label{lap}
&&  \int_\Omega f_0(V)  \psi^h_{ \mu_j,\xi_j}dx\nonumber\\
 & = & \left\{ \arraycolsep=1.5pt
  \begin{array}{lll}
   a_3\Big(\frac{\mu_{l+1}}{ \mu_l}\Big)^{\frac{n-2}{2}} g(\sigma_l)
   +o\bigg(\Big(\frac{\epsilon}{|\ln\epsilon|^2}\Big)^{\frac{n-1}{n-2}}\bigg)
  \ \   &{\rm if}\ l=1,\cdots,k-1,\ h=0, \\[2mm]
  o\bigg(\Big(\frac{\epsilon}{|\ln\epsilon|^2}\Big)^{\frac{n-1}{n-2}}\bigg) \ \   &{\rm if}\ l=k,
\end{array}
\right.
\end{eqnarray}
where $a_3$ and $g(\sigma_l)$ are defined in Proposition \ref{leftside}.
From (\ref{iak5})-(\ref{iak4}) and (\ref{lap}),
 we obtain
 \begin{eqnarray}\label{laps}
 \quad J_1
 =
\left\{ \arraycolsep=1.5pt
  \begin{array}{lll}
  -a_3\frac{\epsilon}{|\ln\epsilon|^2\sum\limits_{l=1}^{k-1}\Big(\frac{d_{l+1}}{ d_l}\Big)^{\frac{n-2}{2}}} g(\sigma_l)
  + o\bigg(\Big(\frac{\epsilon}{|\ln\epsilon|^2}\Big)^{\frac{n-1}{n-2}}\bigg)
  \ \   &{\rm if}\ l=1,\cdots,k-1,\ h=0, \\[2mm]
  o\bigg(\Big(\frac{\epsilon}{|\ln\epsilon|^2}\Big)^{\frac{n-1}{n-2}}\bigg) \ \   &{\rm if}\ l=k,\ h=1,\cdots,n.
\end{array}
\right.
\end{eqnarray}

\emph{Estimate of $J_2$}:
By Lemma \ref{exo}, there holds
\begin{align*}
J_2 = & \sum_{i=1}^k\int_\Omega (-1)^i\Big[f_0(PU_{ \mu_i,\xi_i})- f_0(U_{ \mu_i,\xi_i}) \Big] \psi^h_{ \mu_j,\xi_j}dx\\
 = &\sum_{i=1}^k\int_\Omega (-1)^i f^{'}_0(U_{ \mu_i,\xi_i})(PU_{ \mu_i,\xi_i}-U_{ \mu_i,\xi_i})\psi^h_{ \mu_j,\xi_j}dx\\
 & + \sum_{i=1}^k\int_\Omega(-1)^i \Big[
   f_0(PU_{ \mu_i,\xi_i})-f_0(U_{ \mu_i,\xi_i})-
   f^{'}_0(U_{ \mu_i,\xi_i})(PU_{ \mu_i,\xi_i}-U_{ \mu_i,\xi_i})\Big] \psi^h_{ \mu_j,\xi_j}dx\\
 = & (2^*-1)\sum_{i=1}^k \mu_i^{\frac{n-2 }{2}} \int_\Omega U_{ \mu_i,\xi_i}^{2^*-2}
     \Big(- \alpha_nH(x,\xi_i)+O( \mu_i^2)\Big)\psi^h_{ \mu_j,\xi_j}dx \\
 & - \int_\Omega (-1)^i \Big[f_0(PU_{ \mu_i,\xi_i})- f_0(U_{ \mu_i,\xi_i})
     -  f^{'}_0(U_{ \mu_i,\xi_i}) (PU_{ \mu_i,\xi_i}-U_{ \mu_i,\xi_i})\Big]  \psi^h_{ \mu_j,\xi_j}dx.
\end{align*}
Moreover, for  $ l=0,\cdots,n$, by  H\"{o}lder inequality, (\ref{subu2}),  (\ref{subu3}) and (\ref{sumbu1}), one has
\begin{eqnarray*}
 && \int_\Omega \Big|(-1)^i \Big[f_0(PU_{ \mu_i,\xi_i})- f_0(U_{ \mu_i,\xi_i})
     -  f^{'}_0(U_{ \mu_i,\xi_i}) (PU_{ \mu_i,\xi_i}-U_{ \mu_i,\xi_i})\Big]  \psi^h_{ \mu_j,\xi_j}\Big|dx\\
 & \leq & \Big|  f_0(PU_{ \mu_i,\xi_i})- f_0(U_{ \mu_i,\xi_i})
     -  f^{'}_0(U_{ \mu_i,\xi_i}) (PU_{ \mu_i,\xi_i}-U_{ \mu_i,\xi_i}) \Big|_{\frac{n}{2}} | \psi^h_{ \mu_j,\xi_j}|_{\frac{n}{n-2}}
     = o \bigg(\Big(\frac{\epsilon}{|\ln\epsilon|^2}\Big)^{\frac{n-1}{n-2}}\bigg).
\end{eqnarray*}
If $h=0$ and $j=i$, by (\ref{psi}), we get
\begin{align*}
 & (2^*-1)\alpha_n \sum_{i=1}^k \mu_i^{\frac{n-2}{2}}\int_\Omega  U_{\mu_i,\xi_i}^{2^*-2}
      H(x,\xi_i)  \psi^0_{ \mu_i,\xi_i}dx \\
  = & (2^*-1)\alpha_n  \sum_{i=1}^k \mu_i ^{\frac{3(n-2)}{2}} \mu_i ^{-\frac{n-2}{2}}
     \int_{\frac{\Omega-\xi_i}{ \mu_i }}  U^{2^*-2}(y)H( \mu_i y+\xi_i,\xi_i) \psi^0\Big(\frac{x-\xi_i}{\mu_i}\Big)dy \\
    = &
 \alpha_n a_1  \sum\limits_{i=1}^k \mu_i ^{ n-2 }
  H(\xi_i,\xi_i)+
  o\Big(\frac{\epsilon}{|\ln\epsilon|^2}\Big),
\end{align*}
where $  a_1 $ is given in Proposition \ref{leftside}.
If $h=1,\cdots,n$ and $j=i$, by (\ref{xy}),  one has
\begin{align*}
  &  (2^*-1) \alpha_n\sum_{i=1}^k\mu_i ^{\frac{n-2}{2}} \int_\Omega U_{\mu_i,\xi_i}^{2^*-2}(x)
H(x,\xi_i) \psi^h_{ \mu_i,\xi_i}dx \\
  = &  \alpha_n \sum_{i=1}^k \mu_i ^{\frac{n}{2 }} \int_\Omega
H(x,\xi_i) \frac{\partial }{ \partial \xi_i^h} U^{2^*-1}_{ \mu_i,\xi_i} (x) dx \\
  = &  \alpha_n \sum_{i=1}^k \mu_i ^{\frac{n}{2 }}\Big( \mu_i ^{\frac{n-2}{2}}\frac{\partial }{ \partial \xi^h_i}
     \int_{\frac{\Omega-\xi_i}{ \mu_i }} U^{2^*-1}_{ \mu_i,\xi_i}(y)  H( \mu_i y+\xi_i,\xi_i) dy
     - \mu_i ^{\frac{n-2}{2}}\int_{\frac{\Omega-\xi_i}{ \mu_i }} U^{2^*-1}_{ \mu_i,\xi_i} \frac{\partial H( \mu_i y+\xi_i,\xi_i)}{ \partial \xi^h_i}dy\Big)\\
  = & \alpha_n a_2  \sum_{i=1}^k\mu_i^{n-1}\Big( \frac{\partial ( H(\xi_i,\xi_i)) }{ \partial \xi^h_i}
      -   \frac{\partial H(\xi_i,\xi_i)}{ \partial \xi^h_i} + O( \mu_i )\Big)\\
  = & \frac{1}{2}\alpha_n a_2  \sum_{i=1}^k\mu_i ^{n-1} \partial_{\xi^h_i}\rho(\xi_i)
  + o \bigg(\Big(\frac{\epsilon}{|\ln\epsilon|^2}\Big)^{\frac{n-1}{n-2}}\bigg).
\end{align*}
In a same way,  if   $h=1,\cdots,n$ and $j\neq i$, we have
\begin{align*}
  (2^*-1) \alpha_n\sum_{i=1}^k\mu_i ^{\frac{n-2}{2}} \int_\Omega U_{\mu_i,\xi_i}^{2^*-2}(x)
H(x,\xi_i) \psi^h_{ \mu_j,\xi_j}dx
  =  o\Big(\frac{\epsilon}{|\ln\epsilon|^2}\Big).
\end{align*}
If $h=0$ and $j\neq i$,
\begin{align*}
  (2^*-1)\alpha_n \sum_{i=1}^k \mu_i^{\frac{n-2}{2}}\int_\Omega  U_{\mu_i,\xi_i}^{2^*-2}
      H(x,\xi_i)  \psi^0_{ \mu_j,\xi_j}dx
  =o\Big(\frac{\epsilon}{|\ln\epsilon|^2}\Big).
\end{align*}
As a consequence, there holds
\begin{eqnarray*}
 J_2
   =
\left\{ \arraycolsep=1.5pt
   \begin{array}{lll}
   \alpha_n a_1d_1  \frac{\epsilon}{|\ln\epsilon|^2}
  H(\xi,\xi)
  + o\Big(\frac{\epsilon}{|\ln\epsilon|^2}\Big)\ \   &{\rm if}\ h=0, \\[2mm]
   \frac{1}{2}\alpha_n a_2  \Big(\frac{\epsilon}{|\ln\epsilon|^2}\Big)^{\frac{n-1}{n-2}} \partial_{\xi_h}\rho(\xi)
   + o\bigg(\Big(\frac{\epsilon}{|\ln\epsilon|^2}\Big)^{\frac{n-1}{n-2}}\bigg)\ \   &{\rm if}\ h=1,\cdots,n. \\[2mm]
\end{array}
\right.
\end{eqnarray*}

\emph{Estimate of $J_3$}:
By Taylor  expansion with respect to $\epsilon$, we have
\begin{align}\label{mrfa}
 J_3 =  & \int_\Omega\Big[f_0(V)-f_\epsilon(V) \Big] \psi^h_{ \mu_j,\xi_j}dx\nonumber\\
  = & \epsilon\int_\Omega  V^p\ln\ln(e+V)\psi^h_{ \mu_j,\xi_j}dx
  -\epsilon^2\int_\Omega V^p
   \Big(\ln\ln(e+V)\Big)^2
  \psi^h_{ \mu_j,\xi_j} dx.
\end{align}
For the second term in (\ref{mrfa}),
from Lemma \ref{zsj}  and the annulus given in (\ref{annulus}), it  holds
\begin{align}\label{ma1}
  &\int_\Omega \bigg| V^p
   \Big(\ln\ln(e+V)\Big)^2
  \psi^h_{ \mu_j,\xi_j} \bigg|dx\nonumber\\
  \leq & \int_\Omega \Big|\Big(\sum\limits_{i=1}^k (-1)^iU_{\mu_i,\xi_i}\Big)^p\Big[\ln\ln\Big(e+\sum\limits_{i=1}^k (-1)^iU_{ \mu_i,\xi_i}\Big)\Big]^2
  \psi^h_{ \mu_j,\xi_j}\Big|dx\nonumber\\
 = & \int_{B(\xi,\rho)} \bigg|\Big(\sum\limits_{i=1}^k (-1)^iU_{ \mu_i,\xi_i}\Big)^p
 \Big[\ln\ln\Big(e+\sum\limits_{i=1}^k (-1)^iU_{ \mu_i,\xi_i}\Big)\Big]^2
  \psi^h_{ \mu_j,\xi_j}\bigg|dx
  + o\Big(\frac{\epsilon}{|\ln\epsilon|^2}\Big)\nonumber\\
  = & \sum_{i=1}^k\int_{\mathcal{A}_i}\bigg|\Big((-1)^i U_{ \mu_i,\xi_i}+(-1)^j\sum\limits_{j\neq i}^k U_{ \mu_j,\xi_j}\Big)^p\nonumber\\
&\quad \times \Big[\ln\ln\Big(e+(-1)^i U_{ \mu_i,\xi_i}+(-1)^j\sum\limits_{j\neq i}^k U_{ \mu_j,\xi_j}\Big)\Big]^2
  \psi^h_{ \mu_j,\xi_j}\bigg|dx
  +
  o\Big(\frac{\epsilon}{|\ln\epsilon|^2}\Big),
\end{align}
and on each annulus  $\mathcal{A}_i$,  we change variable setting $\mu_iy=x-\xi$,
for $h=0$, by (\ref{psi}),  then
\begin{align*}
   &  \int_{\mathcal{A}_i}\bigg|\Big((-1)^i U_{ \mu_i,\xi_i}+(-1)^j\sum\limits_{j\neq i}^k U_{ \mu_j,\xi_j}\Big)^p
  \Big[\ln\ln\Big(e+(-1)^i U_{ \mu_i,\xi_i}+(-1)^j\sum\limits_{j\neq i}^k U_{ \mu_j,\xi_j}\Big)\Big]^2
  |\psi^0_{ \mu_j,\xi_j}|\bigg|dx\nonumber\\
   = & \frac{(n-2)\alpha_n^{p+1}}{2}   \mu_j^{\frac{n-2}{2}}\int_{\frac{\mathcal{A}_i}{\mu_i}}
  \bigg|\frac{1}{(1+|y-\sigma_i|^2)^{\frac{n-2}{2}}}
  + \mu_i^{\frac{n-2}{2}}\sum\limits_{j\neq i}^k
  \frac{\mu_j^{\frac{n-2}{2}} }{(\mu_j^2+|\mu_iy-\mu_j\sigma_j|^2)^{\frac{n-2}{2}}}
   \bigg|^p\nonumber\\
  & \times  \bigg|\ln\ln\bigg[\alpha_n\mu_i^{-\frac{n-2}{2}}
  \bigg(e+\frac{1}{(1+|y-\sigma_i|^2)^{\frac{n-2}{2}}}
  +\mu_i^{\frac{n-2}{2}} \sum\limits_{j\neq i}^k
  \frac{\mu_j^{\frac{n-2}{2}} }{(\mu_j^2+|\mu_iy-\mu_j\sigma_j|^2)^{\frac{n-2}{2}}}\bigg)\bigg]\bigg|^2 \nonumber\\
  &   \times   \bigg| \frac{|\mu_iy-\mu_j\sigma_j|^2-\mu_j^2}{(\mu_j^2+|\mu_iy-\mu_j\sigma_j|^2)^{\frac{n}{2}}}\bigg|dy\nonumber\\
 = & O\bigg( \Big|\ln|\ln\mu_i|\Big|^2+\Big(\frac{\mu_i}{\mu_{i-1}}\Big)^{\frac{n}{2}}+\Big(\frac{\mu_{i+1}}{\mu_i}\Big)^{\frac{n}{2}}\bigg)
 = O( \ln|\ln\mu_i|)
 =O\Big( \ln\Big|\ln\frac{\epsilon}{|\ln\epsilon|^2}\Big|\Big).
\end{align*}
Similarly, for $h=1,\cdots,n$, we have
\begin{align*}
 &  \int_{\mathcal{A}_i}\Big((-1)^iU_{ \mu_i,\xi_i}+(-1)^j\sum\limits_{j\neq i}^k U_{ \mu_j,\xi_j}\Big)^p
  \Big[\ln\ln\Big(e+(-1)^iU_{ \mu_i,\xi_i}+(-1)^j\sum\limits_{j\neq i}^k U_{ \mu_j,\xi_j}\Big)\Big]^2
  |\psi^h_{ \mu_j,\xi_j}|dx\nonumber\\
   =  &  O\bigg( \ln|\ln\mu_i|+\Big(\frac{\mu_i}{\mu_{i-1}}\Big)^{\frac{n+2}{2}}+\Big(\frac{\mu_{i+1}}{\mu_i}\Big)^{\frac{n+2}{2}}\bigg)
 = O( \ln|\ln\mu_i|)
 =O\Big( \ln\Big|\ln\frac{\epsilon}{|\ln\epsilon|^2}\Big|\Big).
\end{align*}
Thus the second term in (\ref{mrfa}) becomes
\begin{align*}
\int_\Omega \Big|V^p
   \Big(\ln\ln(e+V)\Big)^2
  \psi^h_{ \mu_j,\xi_j} \Big|dx
 = O( (\ln|\ln\mu_i| )^2)
 = O\bigg(  \Big( \ln\Big|\ln\frac{\epsilon}{|\ln\epsilon|^2}\Big|\Big)^2\bigg).
\end{align*}
Consequently,
\begin{align}\label{ma}
 J_3 =  & \int_\Omega\Big(f_0(V)-f_\epsilon(V) \Big) \psi^h_{ \mu_j,\xi_j}dx\nonumber\\
   = & \epsilon\int_\Omega V^p\Big(\ln\ln(e+V)\Big)\psi^h_{ \mu_j,\xi_j}dx
   +O\bigg( \epsilon^2 \Big( \ln\Big|\ln\frac{\epsilon}{|\ln\epsilon|^2}\Big|\Big)^2\bigg).
\end{align}
Moreover,
\begin{align}\label{iam}
  &   \int_\Omega V^p\Big(\ln\ln(e+V)\Big)
      \psi^h_{ \mu_j,\xi_j}dx\nonumber\\
  = & \int_\Omega \Big(\sum\limits_{i=1}^k(-1)^iU_{ \mu_i,\xi_i}\Big)^p
  \Big[\ln\ln\Big(e+\sum\limits_{i=1}^k(-1)^i U_{ \mu_i,\xi_i}\Big)\Big]\psi^h_{ \mu_j,\xi_j}dx\nonumber\\
    &   -\bigg[
    \int_\Omega  \Big(\sum\limits_{i=1}^k (-1)^iU_{ \mu_i,\xi_i}\Big)^p  \Big[\ln\ln\Big(e+ \sum\limits_{i=1}^k (-1)^iU_{ \mu_i,\xi_i}\Big) \Big]
    -  V^p  \ln\ln(e+ V) \bigg]
         \psi^h_{ \mu_j,\xi_j} dx.
\end{align}
Let us set $h(u)=u^p\ln\ln(e+u)$,  by the mean value theorem,  one has
\[
0\leq h(u)-h(v)\leq Cu^{p-1}\Big(\ln\ln(e+u)+1\Big)(u-v)\quad\mbox{for}\ 0\leq v\leq u.
\]
Then
\begin{align*}
  &  \int_\Omega  \Big(\sum\limits_{i=1}^k (-1)^iU_{ \mu_i,\xi_i}\Big)^{p-1} \bigg(\ln\ln\Big(e+ \sum\limits_{i=1}^k (-1)^iU_{ \mu_i,\xi_i}  \Big)+1\bigg)
       \bigg(\sum\limits_{i=1}^k (-1)^iU_{ \mu_i,\xi_i}- V\bigg) \psi^h_{ \mu_j,\xi_j}dx\\
  = & \sum\limits_{i=1}^k \int_{\mathcal{A}_i}
     \Big(\sum\limits_{i=1}^k (-1)^iU_{ \mu_i,\xi_i}\Big)^{p-1}
     \bigg(\ln\ln\Big(e+ \sum\limits_{i=1}^k (-1)^iU_{ \mu_i,\xi_i}  \Big)+1\bigg)
       \sum\limits_{i=1}^k  (U_{ \mu_i,\xi_i}-   PU_{ \mu_i,\xi_i}) \psi^h_{ \mu_j,\xi_j} dx \\
      & + o\Big(\frac{\epsilon}{|\ln\epsilon|^2}\Big).
\end{align*}
Moreover,
on each annulus $\mathcal{A}_i$, $i=1,\cdots,k$,  if $h=0$, by Lemma \ref{exo} ,  (\ref{psi}) and the Lemma \ref{zsj},
we change variable setting $\mu_iy=x-\xi$,
then
\begin{eqnarray*}
  &&   \int_{\mathcal{A}_i}\bigg|
     \Big( (-1)^iU_{ \mu_i,\xi_i}+ (-1)^j\sum_{j\neq i}^kU_{ \mu_j,\xi_j}\Big)^{p-1}
     \bigg(\ln\ln\Big(e+  (-1)^iU_{ \mu_i,\xi_i}+ (-1)^j\sum_{j\neq i}^kU_{ \mu_j,\xi_j}\Big)+1\bigg)\\
    &&\times\sum\limits_{i=1}^k  (-1)^i(U_{ \mu_i,\xi_i}-   PU_{ \mu_i,\xi_i}) \psi^0_{ \mu_j,\xi_j} \bigg|dx \\
 & = & \alpha_n \sum\limits_{i=1}^k\mu_i^{\frac{n-2}{2}}
   \int_{\mathcal{A}_i}\bigg|  (-1)^i U_{ \mu_i,\xi_i}^{p-1}
     \Big(\ln\ln(e+  (-1)^iU_{ \mu_i,\xi_i})+1\Big)
      (H(x,\xi_i)+O( \mu_i^2)
      ) \psi^0_{ \mu_j,\xi_j} \bigg|dx\\
  &&    + o\Big(\frac{\epsilon}{|\ln\epsilon|^2}\Big)\\
 & = & \alpha_n \sum\limits_{i=1}^k\mu_i^{\frac{n-2}{2}}
  \int_{\frac{\mathcal{A}_i}{\mu_i}}\bigg| \frac{\mu_i^{-2}}{(1+|y-\sigma_i|^2)^2}
  \Big(\ln\ln \Big (e+ \frac{\mu_i^{-\frac{n-2}{2}}}{(1+|y-\sigma_i|^2)^{\frac{n-2}{2}}} \Big  )+1\Big) \\
 & \times& \Big(H(\xi_i+\mu_i y-\mu_i\sigma_i,\xi_i)+O( \mu_i^2)\Big)
 \frac{ |\mu_iy- \mu_j\sigma_j|^2-\mu_j^2 }{\Big( \mu_j^2+| \mu_iy-\mu_j \sigma_j |^2\Big)^{\frac{n}{2}} } \bigg|dy
 +o\Big(\frac{\epsilon}{|\ln\epsilon|^2}\Big)\\
 & \leq & \left\{ \arraycolsep=1.5pt
   \begin{array}{lll}
    \alpha_n \sum\limits_{i=1}^k\mu_i^{\frac{n-2}{2}}
     \Big(H(\xi_i,\xi_i)+O( \mu_i^2)\Big) \mu_j^{-(n-2)}(\ln|\ln \mu_i|+1)\\
\quad  \times \int_{\frac{\mathcal{A}_i}{\mu_i}} \frac{1}{(1+|y-\sigma_i|^2)^2}\frac{1}{|y|^{n-2}}dy
  +o\Big(\frac{\epsilon}{|\ln\epsilon|^2}\Big)\ \   {\rm if}\ j<i, \\[2mm]
  \alpha_n \sum\limits_{i=1}^k\mu_i^{-\frac{n-2}{2}}(\ln|\ln \mu_i|+1)
  \Big(H(\xi_i,\xi_i)+O( \mu_i^2)\Big)   \\
\quad \times \int_{\frac{\mathcal{A}_i}{\mu_i}} \frac{1}{(1+|y-\sigma_i|^2)^2}\frac{1}{|y-\frac{\mu_j}{\mu_i}\sigma_j|^{n-2}}dy
  +o\Big(\frac{\epsilon}{|\ln\epsilon|^2}\Big)\ \   {\rm if}\ j>i, \\[2mm]
  \end{array}
\right.\\
 & = &
\left\{ \arraycolsep=1.5pt
   \begin{array}{lll}
   O\bigg((\frac{\mu_i}{\mu_j}\Big)^{ \frac{n-2}{2}}\mu_j^{ -\frac{n-2}{2}}\ln |\ln \mu_i |\bigg)\ \   &{\rm if}\ h=0, \\[2mm]
  O\bigg(  \mu_i^{ -\frac{n-2}{2}}\ln |\ln \mu_i |\bigg)\ \   &{\rm if}\ h=1,\cdots,n, \\[2mm]
\end{array}
\right.\\
& = &
  O\bigg(\frac{\epsilon}{|\ln\epsilon|^2}
   \ln\Big|\ln\frac{\epsilon}{|\ln\epsilon|^2}\Big|\bigg).
\end{eqnarray*}

By the same argument, for $h=1,\cdots,n$, we have
\begin{align*}
 &  \int_{\mathcal{A}_i}\bigg|
     \Big( (-1)^iU_{ \mu_i,\xi_i}+(-1)^j\sum_{j\neq i}^kU_{ \mu_j,\xi_j} \Big)^{p-1}
     \bigg(\ln\ln\Big(e+ (-1)^iU_{ \mu_i,\xi_i}+(-1)^j\sum_{j\neq i}^kU_{ \mu_j,\xi_j}\Big )+1\bigg)\\
 &\quad \times \sum\limits_{i=1}^k (-1)^i (U_{ \mu_i,\xi_i}-   PU_{ \mu_i,\xi_i})\psi^h_{ \mu_j,\xi_j}\bigg|dx\\
 =  & O\Big (\sum\limits_{i=1}^k \mu_i^{\frac{n-2}{2}}\ln |\ln \mu_i | \Big)
  = O\bigg(\Big(\frac{\epsilon}{|\ln\epsilon|^2}\Big)
   ^{\frac{1}{2}}\ln\Big|\ln\frac{\epsilon}{|\ln\epsilon|^2}\Big|\bigg).
\end{align*}
Therefore,  the second term in (\ref{iam}) becomes
\begin{align*}
 &   \int_\Omega \bigg| \Big(\sum\limits_{i=1}^k (-1)^iU_{ \mu_i,\xi_i}\Big)^p\bigg(\ln\ln\Big(e+ \sum\limits_{i=1}^k (-1)^iU_{ \mu_i,\xi_i}\Big)  \bigg)
          \psi^h_{ \mu_j,\xi_j} \bigg|dx
 - \int_\Omega  \Big|V^p \ln\ln(e+ V)
         \psi^h_{ \mu_j,\xi_j}\Big| dx\\
   = & O\Big(\sum\limits_{i=1}^k \mu_i^{\frac{n-2}{2}}\ln |\ln \mu_i |\Big)
   =  O\bigg(\Big(\frac{\epsilon}{|\ln\epsilon|^2}\Big)
   ^{\frac{1}{2}}\ln\Big|\ln\frac{\epsilon}{|\ln\epsilon|^2}\Big|\bigg).
\end{align*}

For the first term in (\ref{iam}),  if $j=i$, from Lemma \ref{zsj}, (\ref{psi}),
let $x-\xi=\mu_iy$,
then\\
\begin{align}\label{maa}
  &   \int_\Omega \bigg| \Big(\sum\limits_{i=1}^k(-1)^iU_{ \mu_i,\xi_i} \Big)^p\Big[\ln\ln \Big(e+\sum\limits_{i=1}^k
        (-1)^iU_{ \mu_i,\xi_i} \Big)\Big]\psi^h_{ \mu_i,\xi_i}\bigg|dx\nonumber\\
   = & \sum\limits_{i=1}^k \int_{\mathcal{A}_i}
    \bigg|\Big((-1)^iU_{ \mu_i,\xi_i}+(-1)^j\sum\limits_{j\neq i}^k U_{ \mu_j,\xi_j} \Big)^p\nonumber\\
 &\times  \Big[\ln\ln\Big(e+ (-1)^i U_{ \mu_i,\xi_i}+(-1)^j\sum\limits_{j\neq i}^k U_{ \mu_j,\xi_j}  \Big)\Big]\psi^h_{ \mu_i,\xi_i}\bigg|dx
   +  o\Big(\frac{\epsilon}{|\ln\epsilon|^2}\Big)\nonumber\\
  = & \mu_1^{n-\frac{n+2}{2}-\frac{n-2}{2}}
  \int_{\frac{\mathcal{A}_i}{\mu_i}} \frac{1}{(1+|y-\sigma_i|^2)^{\frac{n+2}{2}}}
 \bigg|\ln\ln\bigg(e+(-1)^i\mu_i^{ -\frac{n-2}{2}} \frac{1}{(1+|y-\sigma_i|^2)^{\frac{n-2}{2}}}\nonumber\\
 & +(-1)^j\sum\limits_{j\neq i}^k  \frac{\mu_j^{\frac{n-2}{2}}}{\Big(  (\frac{\mu_j}{\mu_i} )^2 +|y-\frac{\sigma_i}{\mu_i}|^2\Big)^{\frac{n+2}{2}}\mu_i^{n-2}} \bigg)
  \psi^h(y)\bigg|dy
  +  o\Big(\frac{\epsilon}{|\ln\epsilon|^2}\Big)\nonumber\\
  = &\sum\limits_{i=1}^k \ln\Big|\ln\mu_i^{ -\frac{n-2}{2}}\Big|
  \int_{\frac{\mathcal{A}_i}{\mu_i}} \frac{1}{(1+|y-\sigma_i|^2)^{\frac{n+2}{2}}}\psi^h(y)dy
   + \sum\limits_{i=1}^k \frac{1}{|\ln \mu_i|}   \int_{\frac{\mathcal{A}_i}{\mu_i}}
  \frac{1}{(1+|y-\sigma_i|^2)^{\frac{n+2}{2}}}\nonumber\\
  & \times \bigg[|\ln  \mu_i|\ln
     \bigg(1+\frac{ \ln \Big[e^{1-\frac{n-2}{2}|\ln \mu_i|}+\frac{1}{(1+|y-\sigma_i|^2)^{\frac{n+2}{2}}}\Big]}{\frac{n-2}{2}
     |\ln \mu_i|}\bigg)\bigg]\psi^h(y)dy
     + o\Big(\frac{\epsilon}{|\ln\epsilon|^2}\Big).
\end{align}
Moreover, let
$\Lambda(y)=\frac{1}{(1+|y-\sigma_i|^2)^{\frac{n+2}{2}}} |\ln\mu_i|\ln
     \bigg(1+\frac{ \ln \Big(e^{1-\frac{n-2}{2}|\ln  \mu_i|}+\frac{1}{(1+|y-\sigma_i|^2)^{\frac{n+2}{2}}}\Big)}{\frac{n-2}{2}
     |\ln  \mu_i|}\bigg)\psi^h(y)$ for
$h=1,\cdots,n$.
Since $\psi(y)$ is a odd function,
we deduce  $\int_{\mathbb{R}^n}\Lambda(y)dy=0$.
Further,  by Lemma \ref{zsj}, there holds
\begin{align*}
 &  \int_{\mathbb{R}^n}\Lambda(y)dy-\int_{\frac{\mathcal{A}_i}{\mu_i}}\Lambda(y)dy
 =\int_{\mathbb{R}^n \backslash {\frac{\mathcal{A}_i}{\mu_i}}}\Lambda(y)dy\\
  \leq & C  |\ln \mu_i |\ln
     \bigg(1+\frac{ \ln \Big(e^{1-\frac{n-2}{2}|\ln \mu_i|}+\alpha_n  \Big)}{\frac{n-2}{2}|\mu_i|}\bigg)
     \int_{\mathbb{R}^n \backslash {\frac{\mathcal{A}_i}{\mu_i}}}
     \frac{1}{(1+|y-\sigma_i|^2)^{\frac{n+2}{2}}} |\psi^h(y)|dy\\
   \leq & C\Big(\frac{2}{n-2}\ln \alpha_n+o(1)\Big)\mu_i^{ n+1}
   =O(\mu_i^{ n+1})
   = o\bigg(\Big(\frac{\epsilon}{|\ln\epsilon|^2}\Big)
   ^{\frac{n-1}{n-2}}\bigg).
\end{align*}
Hence, for $ \mu_i$ small enough, we conclude that
\begin{align}\label{ma2}
  &  \int_\Omega \Big (\sum\limits_{i=1}^k(-1)^iU_{ \mu_i,\xi_i}\Big )^p\ln\ln \Big(e+\sum\limits_{i=1}^k(-1)^i
        U_{ \mu_i,\xi_i} \Big)\psi^h_{ \mu_j,\xi_j}dx\nonumber\\
  = & O\Big(\sum\limits_{i=1}^k \mu_i^{ n+1} \ln\Big|\ln \mu_i^{ -\frac{n-2}{2}}\Big|\Big)
  =o\bigg(\Big(\frac{\epsilon}{|\ln\epsilon|^2}\Big)^{\frac{n-1}{n-2}}\bigg),\quad
   \mbox{for}\
h=1,\cdots,n.
\end{align}

When  $h=0$, since $U_{ \mu_i,\xi_i}$ is the unique positive solution of problem  (\ref{limiequ}) and also, $\psi_{ \mu_i,\xi_i}^0$ solves (\ref{linear-equ}), then
\begin{equation*}
 \int_{\mathbb{R}^n} U_{ \mu_i,\xi_i}^p \psi_{ \mu_i,\xi_i}^0 dx
 =\int_{\mathbb{R}^n} \nabla U_{ \mu_i,\xi_i} \nabla \psi_{ \mu_i,\xi_i}^0 dx=p\int_{\mathbb{R}^n} U_{ \mu_i,\xi_i}^p\psi_{ \mu_i,\xi_i}^0 dx,
\end{equation*}
which follows that
\begin{equation*}
   \langle U_{ \mu_i,\xi_i},\psi_{ \mu_i,\xi_i}^0 \rangle=\int_{\mathbb{R}^n} U_{ \mu_i,\xi_i}^p\psi_{ \mu_i,\xi_i}^0dx=0.
\end{equation*}
Thus,   $\int_{\mathbb{R}^n} \frac{1}{(1+|y-\sigma_i|^2)^{\frac{n+2}{2}}}\psi^0(y)dx=0$ holds,
also,
 $
\int_{\mathbb{R}^n \backslash {\frac{\mathcal{A}_i}{\mu_i}}}
\frac{1}{(1+|y-\sigma_i|^2)^{\frac{n+2}{2}}}\psi^0dy=0.
$
From (\ref{maa}) and
Lemma \ref{zsj}, there holds
 \begin{align*}
  &   \int_\Omega \bigg|\Big(\sum\limits_{i=1}^k(-1)^iU_{ \mu_i,\xi_i}\Big)^p\ln\ln\Big(e+\sum\limits_{i=1}^k(-1)^i
        U_{ \mu_i,\xi_i}\Big)\psi^0_{ \mu_i,\xi_i}\bigg|dx\nonumber\\
  = & \sum\limits_{i=1}^k \frac{1}{|\ln\mu_i|}\frac{2}{n-2}\int_{\mathbb{R}^n}\bigg|
     \frac{1}{(1+|y-\sigma_i|^2)^{\frac{n+2}{2}}}
     \ln\Big(\frac{1}{(1+|y-\sigma_i|^2)^{\frac{n+2}{2}}}\Big)\psi^0(y)\bigg|dy +o\Big(\frac{\epsilon}{|\ln\epsilon|^2}\Big).
 \end{align*}
On the other hand,
\begin{eqnarray*}
&&\int_\Omega\bigg|  \Big(\sum\limits_{i=1}^k (-1)^iU_{ \mu_i,\xi_i} \Big)^p\ln\ln \Big(e+\sum\limits_{i=1}^k(-1)^i
        U_{ \mu_i,\xi_i} \Big)\psi^h_{ \mu_i,\xi_i}\bigg|dx\\
        &  = &
\left\{ \arraycolsep=1.5pt
   \begin{array}{lll}
   \frac{2}{n-2}a_4\sum\limits_{i=1}^k\frac{\epsilon}{ |\ln \mu_i |} + O\Big(\frac{1}{|\ln\mu_1|}  \Big)\ \   &{\rm if}\ h=0, \\[2mm]
    o\bigg(\Big(\frac{\epsilon}{|\ln\epsilon|^2}\Big)^{\frac{n-1}{n-2}}\bigg)\ \   &{\rm if}\ h=1,\cdots,n. \\[2mm]
\end{array}
\right.\\
& = &
\left\{ \arraycolsep=1.5pt
   \begin{array}{lll}
      \frac{2}{n-2}a_4\frac{\epsilon}{|\ln\epsilon|^2}
  + \frac{2}{n-2}a_4\sum\limits_{i=1}^k |\ln d_i|
   + o\Big(\frac{\epsilon}{|\ln\epsilon|^2}\Big)\ \   &{\rm if}\ h=0, \\[2mm]
   o\bigg(\Big(\frac{\epsilon}{|\ln\epsilon|^2}\Big)^{\frac{n-1}{n-2}}\bigg)\ \   &{\rm if}\ h=1,\cdots,n. \\[2mm]
\end{array}
\right.\\
\end{eqnarray*}
By the same argument,  if $j\neq i$, one has
\begin{equation*}
\int_\Omega\bigg| \Big (\sum\limits_{i=1}^k(-1)^iU_{ \mu_i,\xi_i} \Big)^p\ln\ln \Big(e+\sum\limits_{i=1}^k(-1)^i
        U_{ \mu_i,\xi_i} \Big)\psi^h_{ \mu_i,\xi_i}\bigg|dx=
    o\bigg(\Big(\frac{\epsilon}{|\ln\epsilon|^2}\Big)^{\frac{n-1}{n-2}}\bigg).
\end{equation*}
Consequently,
\begin{eqnarray*}
 J_3  & = &   \int_\Omega\Big(f_0(V)-f_\epsilon(V) \Big) \psi^h_{ \mu_j,\xi_j}dx\\
 &  = &
\left\{ \arraycolsep=1.5pt
   \begin{array}{lll}
  - \frac{2}{n-2}a_4\frac{\epsilon}{|\ln\epsilon|^2}
  - \frac{2}{n-2}a_4\sum\limits_{i=1}^k |\ln d_i|
  + o\Big(\frac{\epsilon}{|\ln\epsilon|^2} \Big)\ \   \ & {\rm if}\  h=0, \\[2mm]
   O\bigg (\Big(\frac{\epsilon}{|\ln\epsilon|^2}\Big)
   ^{\frac{1}{2}}\ln\Big|\ln\frac{\epsilon}{|\ln\epsilon|^2}\Big|\bigg)\ \  \  & {\rm if}\  h=1,\cdots,n. \\[2mm]
\end{array}
\right.
\end{eqnarray*}
Combining $J_1$-$J_3$,  the proof of this lemma   is completed.

Finally, let us state that $a_4$ is a positive constant.
From (\ref{psi0}),  polar coordinates, integrating
by parts, changing variables ($s = r^2$, $dr = \frac{1}{2}s^{-\frac{1}{2}}ds$), and a fact that $|\partial B_1(0) |=\frac{2\pi^{\frac{n}{2}}}{\Gamma(\frac{n}{2})}$,  we obtain
\begin{align*}
a_4 = & - \int_{\mathbb{R}^n}
     \frac{1}{(1+|y-\sigma_i|^2)^{\frac{n+2}{2}}}
     \ln\Big(\frac{1}{(1+|y-\sigma_i|^2)^{\frac{n+2}{2}}}\Big)\psi^0(y)dy\\
  = &  -  \int_{B_1(0)}
     \frac{1}{(1+|y-\sigma_i|^2)^{\frac{n+2}{2}}}
     \ln\Big(\frac{1}{(1+|y-\sigma_i|^2)^{\frac{n+2}{2}}}\Big)\psi^0(y)dy \\
 = &  -  \frac{(n-2)\alpha_n^{2^*}}{2}|\partial B_1(0) |  \int_0^\infty
     \frac{r^{n-1}}{(1+r^2)^{\frac{n+2}{2}}}
     \ln\Big(\frac{1}{(1+r^2)^{\frac{n+2}{2}}}\Big)
\frac{r^2-1}{(1+r^2)^{\frac{n }{2}}} dr \\
 = &  \frac{(n-2)^2\alpha_n^{2^*}}{4}\frac{2\pi^{\frac{n}{2}}}{\Gamma(\frac{n}{2})}
 \int_0^\infty
     \frac{r^{n-1}(r^2-1)}{(1+r^2)^{n+1}}\ln (1+r^2)  dr \\
 = &  \frac{(n-2)^2\alpha_n^{2^*}}{2}\frac{ \pi^{\frac{n}{2}}}{\Gamma(\frac{n}{2})}
 \int_0^\infty\frac{1}{n}\Big(\frac{r}{1+r^2}\Big)^n\frac{2r}{1+r^2} dr\\
 = &  \frac{(n-2)^2\alpha_n^{2^*}}{2n}\frac{ \pi^{\frac{n}{2}}}{\Gamma(\frac{n}{2})}
 \int_0^\infty \frac{s^{\frac{n}{2}}}{(1+s)^{n+1}}ds
 =   \frac{(n-2)^2\alpha_n^{2^*}}{2n}\frac{ \pi^{\frac{n}{2}}}{\Gamma(\frac{n}{2})}
 B(\frac{n}{2}+1,\frac{n}{2})ds\\
 =  &  \frac{(n-2)^2\alpha_n^{2^*}}{2n}\frac{ \pi^{\frac{n}{2}}}{\Gamma(\frac{n}{2})}
 \frac{\Gamma(\frac{n}{2}+1)\Gamma(\frac{n}{2})}{\Gamma( n+1)}
 =  \frac{\Gamma(\frac{n}{2} )\pi^{\frac{n}{2}}}{4\Gamma( n+1)} n^\frac{n}{2}  (n-2)^{\frac{n+4}{2}}.
\end{align*}
\end{proof}

\subsection*{Acknowledgments}
The authors were supported by National Natural Science Foundation of China 11971392.


\begin{thebibliography}{99}



\bibitem{bmc}
A. Bahri, J. Coron,
On a nonlinear elliptic equation involving the critical Sobolev exponent: the effect of the topology of the domain,
\emph{Comm. Pure Appl. Math.},  \textbf{41}(3), (1988), 253-294.


\bibitem{BA}
A. Bahri, Y. Li, O. Rey,
On a variational problem with lack of compactness: the topological effect of the critical points as infinity,
\emph{Calc. Var. Partial Differ. Equ.},  \textbf{3}(1), (1995), 67-93.

\bibitem{Bap}
T. Bartsch, T. D'Aprile,  A. Pistoia,
Multi-bubble nodal solutions for slightly subcritical elliptic problems in domains with symmetries,
\emph{Ann. Inst. H. Poincar\'{e} Anal. Non Lin\'{e}aire},  \textbf{30}(6), (2013), 1027-1047.


\bibitem{Bmp}
T. Bartsch, A. Micheletti, A. Pistoia,
On the existence and the profile of nodal solutions of elliptic equations involving critical growth,
\emph{Calc. Var. Partial Differ. Equ.},  \textbf{26},  (2006),  265-282.

\bibitem{baw}
T. Bartsch, Z.  Wang,
On the existence of sign changing solutions for semilinear Dirichlet problems,
\emph{Topol. Methods Nonlinear Anal.}, \textbf{7}, (1996), 115-131.

\bibitem{bas}
T. Bartsch,
Critical point theory on partially ordered Hilbert spaces,
\emph{J. Funct. Anal.}, \textbf{186}, (2001), 117-152.

\bibitem{bwe}
T. Bartsch, T. Weth,
A note on additional properties of sign-changing solutions to superlinear elliptic equations,
\emph{Topol. Methods Nonlinear Anal.},  \textbf{22},  (2003), 1-14.


\bibitem{bmp}
M. Ben Ayed, K. EI Mehdi, F. Pacella,
Classification of low energy sign-changing solutions of an almost critical problem,
\emph{J. Funct. Anal.},  \textbf{50}, (2007), 347-373.


\bibitem{bay}
M. Ben Ayed, H. Fourti,
Multispike solutions for a slightly subcritical elliptic problem with non-power nonlinearity,
\emph{Discrete Contin. Dyn. Syst.}, \textbf{43}(2), (2023), 661-687.

\bibitem{bEG}
E. Bianchi, H. Egnell,
A note on the Sobolev inequality,
\emph{J. Funct. Anal.}, \textbf{100}, (1991), 18-24.


\bibitem{cacn}
A. Castro, J. Cossio, J. Neuberger,
A sign-changing solution for a superlinear Dirichlet problem,
\emph{Rocky Mountain J. Math.}, \textbf{27}, (1997), 1041-1053.


\bibitem{cp1}
  A. Castro, R. Pardo,
A priori bounds for positive solutions of subcritical elliptic equations,
\emph{ Rev. Mat. Complut.}, \textbf{28}, (2015), 715-731.

\bibitem{cp2}
 A. Castro, R. Pardo,
A priori estimates for positive solutions to subcritical elliptic problems in a class of non-convex regions,
\emph{ Discrete Contin. Dyn. Syst. Ser. B}, \textbf{22}(3), (2017), 783-790.



\bibitem{Cdj}
W. Chen,   J. D\'{a}vila, I. Guerra,
Bubble tower solutions for a supercritical elliptic problem in $\R^N$,
\emph{Ann. Sc. Norm. Super. Pisa Cl. Sci.}, \textbf{15}(5), (2016), 85-116.

\bibitem{CS}
W. Chen, S.  Deng,
Sign-changing bubble tower solutions for a supercritical elliptic problem with the H\'{e}non term,
\emph{Nonlinearity}, \textbf{30}(12), (2017),  4344-4368.

\bibitem{Chx}
W. Chen, X. Huang,
Sign-changing bubble tower solutions for a Paneitz-type problem,
arXiv preprint arXiv:2202.06006, 2022.

\bibitem{CLY}
W. Chen, W. Long, J. Yang,
Sign-changing bubble tower solutions for a critical fractional problem,
\emph{Nonlinear Anal.},  \textbf{223},  (2022), 113054.



\bibitem{mp}
M. Clapp, R. Pardo, A. Pistoia, A. Salda\~{n}a,
A solution to a slightly subcritical elliptic problem with non-power nonlinearity,
\emph{J. Differential Equations}, \textbf{275}, (2021), 418-446.


\bibitem{cmw}
M. Clapp, T. Weth,
Minimal nodal solutions of the pure critical exponent problem on a symmetric domain,
\emph{Calc. Var. Partial Differ. Equ.},   \textbf{21}(1), (2004), 1-14.

\bibitem{con}
A. Contreras, M. del Pino,
Nodal bubble-tower solutions to radial elliptic problems near criticality,
\emph{Discrete Contin. Dyn. Syst.}, \textbf{16}(3), (2006), 525-539.


\bibitem{cog}
G. Cora,   A. Iacopetti,
Sign-changing bubble-tower solutions to fractional semilinear elliptic problems,
\emph{Discrete Contin. Dyn. Syst.}, \textbf{39}, (2019)(10),  6149-6173.


\bibitem{CUD}
M. Cuesta, R. Pardo,
Positive solutions for slightly subcritical elliptic  problems via Orlicz spaces,
\emph{Milan J. Math.}, \textbf{90}(1), (2022), 229-255.

\bibitem{DAG}
Y. Dammak, R. Ghoudi,
Sign-changing tower of bubbles to an elliptic subcritical equation,
\emph{Commun. Contemp. Math.},  \textbf{21}(7),  (2019),  1850052, 38 pp.



\bibitem{del}
M. del Pino, J. Dolbeault, M. Musso,
"Bubble-tower" radial solutions in the slightly supercritical Brezis-Nirenberg problem,
\emph{J. Differential Equations},   \textbf{193}(2), (2003), 280-306.

\bibitem{pmf}
M. del Pino, P. Felmer,  M. Musso,
Two-bubble solutions in the super-critical Bahri-Coron's problem,
\emph{Calc. Var. Partial Differ. Equ.}, \textbf{16}(2), (2003), 113-145.


\bibitem{deng}
Y. Deng, S. Peng,  X.  Zhang,  Y.  Zhou,
A class of supercritical Sobolev type inequalities with logarithm and related elliptic equations,
\emph{J. Differential Equations}, \textbf{ 341}, (2022), 150-188.


\bibitem{ding}
W.  Ding,
On a conformally invariant elliptic equation on $\R^N$,
\emph{Comm. Math. Phys.}, \textbf{107}, (1986), 331-335.


\bibitem{fwe}
M. Flucher, J. Wei,
Semilinear Dirichlet problem with nearly critical exponent, asymptotic location of hot spots,
\emph{Manuscripta Math.}, \textbf{94}, (1997), 337-346.



\bibitem{gmp}
Y. Ge, M. Musso, A. Pistoia,
Sign changing tower of bubbles for an elliptic problem at the critical exponent in pierced non-symmetric domains,
\emph{Comm. Partial Differential Equations}, \textbf{35}(8), (2010), 1419-1457.


\bibitem{gti}
M. Grossi, F. Takahashi,
Nonexistence of multi-bubble solutions to some elliptic equations on convex domains,
\emph{J. Funct. Anal.}, \textbf{259}, (2010), 904-917.


\bibitem{han}
Z. Han,
Asymptotic approach to singular solutions for nonlinear elliptic equations involving critical Sobolev exponent,
\emph{Ann. Inst. H. Poincar\'{e} Anal. Non Lin\'{e}aire},  \textbf{8}, (1991), 159-174.


\bibitem{hvu}
E. Hebey, M. Vaugon,
Existence and multiplicity of nodal solutions for nonlinear elliptic equations with critical Sobolev growth,
\emph{J. Funct. Anal.},  \textbf{119}, (1994), 298-318.


\bibitem{KAW}
J. Kazdan, F. Warner,
Remarks on some quasilinear elliptic equations,
\emph{Comm. Pure Appl. Math.}, \textbf{28},  (1975),  567-597.


\bibitem{liy}
Y. Li,
Prescribing scalar
curvature on $\mathcal{S}^n$ and related problems. I,
\emph{J. Differential Equations}, \textbf{120}(2), (1995), 319-410.

\bibitem{ZZW}
Z. Liu, Z. Liu,  W. Xu,
Non-power type perturbation for the critical H\'{e}non problem,
\emph{J. Math. Phys.},  \textbf{64}(2),  (2023),   Paper No. 021509, 19 pp.


\bibitem{np}
N. Mavinga, R. Pardo,
A priori bounds and existence of positive solutions for semilinear elliptic systems,
\emph{ J. Math. Anal. Appl.},  \textbf{449}(2), (2017), 1172-1188.


\bibitem{map}
M. Musso, A. Pistoia,
Multispike solutions for a nonlinear elliptic problem involving critical Sobolev exponent,
\emph{  Indiana Univ. Math. J.}, \textbf{ 5}, (2002), 541-579.

\bibitem{gener}
M. Musso, A. Pistoia,
Tower of bubbles for almost critical problems in general domains,
\emph{J. Math. Pures Appl.},   \textbf{93}, (2010),  1-40.


\bibitem{rp2}
R. Pardo,
On the existence of a priori bounds for positive solutions of elliptic problems, II,
\emph{ Rev. Integr. Temas Mat.}, \textbf{37}(1), (2019),  113-148.


\bibitem{paw}
A. Pistoia, T. Weth,
Sign-changing bubble tower solutions in a slightly subcritical semilinear Dirichlet
problem,
\emph{ Ann. Inst. H. Poincar\'{e} Anal. Non Lin\'{e}aire}, \textbf{24}(2), (2007), 325-340.


\bibitem{Poh}
S.  Pohozaev,
On the eigenfunctions of the equation $\Delta u+\lambda f(u)=0$,
\emph{ Dokl. Akad. Nauk}, \textbf{165}, (1965), 36-39, (Russian).


\bibitem{rey}
O. Rey,
The role of the Green's function in a non-linear elliptic equation involving the critical Sobolev exponent,
\emph{J. Funct. Anal.}, \textbf{89}, (1990),  1-52.

\bibitem{resy}
O. Rey,
Blow-up points of solutions to elliptic equations with limiting nonlinearity,
\emph{ Differential Integral Equations}, \textbf{4}, (1991), 1155-1167.



\end{thebibliography}
\end{document}